\documentclass[a4paper]{article}
\usepackage{geometry}
\geometry{left=2.5cm,right=2.5cm,top=2.5cm,bottom=2.5cm}
\usepackage{amssymb}
\usepackage{latexsym}
\usepackage{amsmath}
\usepackage{indentfirst}
\usepackage{graphicx}
 \usepackage[colorlinks=true]{hyperref}
\usepackage{mathrsfs} 
\usepackage{chngcntr}
\usepackage{color}

\newtheorem{Thm}{Theorem}[section]

\newtheorem{Lem}{Lemma}[section]
\newtheorem{Pro}{Proposition}[section]

\newtheorem{Rem}{Remark}[section]

\numberwithin{equation}{section}

\newenvironment{proof}{\medskip\par\noindent{\bf Proof\/}:\quad}{\qquad
\raisebox{-0.5mm}{\rule{1.5mm}{1mm}}\vspace{6pt}}
\begin{document}
\title{Local uniqueness of semiclassical bounded states for a singularly perturbed fractional Kirchhoff problem}

\author{
{Vicen\c tiu D. R\u{a}dulescu}\thanks{Email: {\tt radulescu@inf.ucv.ro}}\\
\small Faculty of Applied Mathematics, AGH University of Science and Technology, 30-059 Krak\'{o}w, Poland\\
\small Department of Mathematics, University of Craiova, Craiova, 200585, Romania\\
{Zhipeng Yang}\thanks{Corresponding author. Email: {\tt yangzhipeng326@163.com}}\\
\small Mathematical Institute, Georg-August-University of G\"ottingen, G\"ottingen 37073, Germany\\
}

\date{} \maketitle

\textbf{Abstract.}
In this paper, we consider the following singularly perturbed fractional Kirchhoff problem
\begin{equation*}
\Big(\varepsilon^{2s}a+\varepsilon^{4s-N} b{\int_{\mathbb{R}^{N}}}|(-\Delta)^{\frac{s}{2}}u|^2dx\Big)(-\Delta)^su+V(x)u=|u|^{p-2}u,\quad \text{in}\ \mathbb{R}^{N},
\end{equation*}
where $a,b>0$, $2s<N<4s$ with $s\in(0,1)$, $2<p<2^*_s=\frac{2N}{N-2s}$ and $(-\Delta )^s$ is the fractional Laplacian. For $\varepsilon> 0$ sufficiently small and a bounded continuous function $V$, we establish a type of local Pohoz\v{a}ev identity by extension technique and then we can obtain the local uniqueness of semiclassical bounded solutions based on our recent results on the uniqueness and non-degeneracy of positive solutions to the limit problem.

\vspace{6mm} \noindent{\bf Keywords:} Fractional Kirchhoff equations; Pohoz\v{a}ev identity; Lyapunov-Schmidt reduction.

\vspace{6mm} \noindent
{\bf 2010 Mathematics Subject Classification.} 35R11, 35A15, 47G20.


\section{Introduction and main results}
Let $H^s(\mathbb{R}^N) (0<s<1)$ be the fractional Sobolev space defined by
\begin{equation*}
H^{s}(\mathbb{R}^N)=\bigg\{u\in L^2(\mathbb{R}^N):\frac{u(x)-u(y)}{|x-y|^{\frac{N}{2}+s}}\in L^2({\mathbb{R}^{N}\times\mathbb{R}^{N}})\bigg\},
\end{equation*}
endowed with the natural norm
\begin{equation*}
  \|u\|^2=\int_{\mathbb{R}^N}|u|^2dx+\int\int_{\mathbb{R}^N\times\mathbb{R}^N}\frac{|u(x)-u(y)|^2}{|x-y|^{N+2s}}dxdy.
\end{equation*}
We continue to consider the following singularly perturbed fractional Kirchhoff problem
\begin{equation}\label{eq1.1}
\Big(\varepsilon^{2s}a+\varepsilon^{4s-N} b{\int_{\mathbb{R}^{N}}}|(-\Delta)^{\frac{s}{2}}u|^2dx\Big)(-\Delta)^su+V(x)u=u^p,\quad \text{in}\ \mathbb{R}^{N},
\end{equation}
where $a,b>0$, $\varepsilon>0$ is a parameter, $V: \mathbb{R}^{N} \rightarrow \mathbb{R}$ is a bounded continuous function and $p$ satisfies
\begin{equation*}
1<p<2_s^*-1=
\begin{cases}
\frac{N+2s}{N-2s}, &  0<s<\frac{N}{2}, \\
+\infty, &  s\geq \frac{N}{2},
\end{cases}
\end{equation*}
where $2^*_s$ is the standard fractional Sobolev critical exponent. The fractional Laplacian $(-\Delta)^s$ is the pseudo-differential
operator defined by
\begin{equation*}
\mathcal{F}((-\Delta)^su)(\xi)=|\xi|^{2s}\mathcal{F}(u)(\xi),\ \ \xi\in \mathbb{R}^N,
\end{equation*}
where $\mathcal{F}$ denotes the Fourier transform. It is also given by
\begin{equation*}
(-\Delta)^{s}u(x)=-\frac{1}{2}C(N,s)\int_{\mathbb{R}^N}\frac{u(x+y)+u(x-y)-2u(x)}{|y|^{N+2s}}dy,
\end{equation*}
where
\begin{equation*}
C(N,s)=\bigg(\int_{\mathbb{R}^N}\frac{(1-cos\xi_1)}{|\xi|^{N+2s}}d\xi\bigg)^{-1},\ \xi=(\xi_1,\xi_2,...\xi_N).
\end{equation*}
\par
If $s=1$, Eq.\eqref{eq1.1} reduces to the well known Kirchhoff type problem, which and their variants have been studied extensively in the literature. The equation that goes under the name of Kirchhoff equation was proposed in \cite{Kirchhoff1883} as a model for the transverse oscillation of a stretched string in the form
\begin{equation}\label{eq1.2}
\rho h \partial_{t t}^{2} u-\left(p_{0}+\frac{\mathcal{E}h}{2 L} \int_{0}^{L}\left|\partial_{x} u\right|^{2} d x\right) \partial_{x x}^{2} u=0,
\end{equation}
for $t \geq 0$ and $0<x<L$, where $u=u(t, x)$ is the lateral displacement at time $t$ and at position $x, \mathcal{E}$ is the Young modulus, $\rho$ is the mass density, $h$ is the cross section area, $L$ the length of the string, $p_{0}$ is the initial stress tension.
\par
Problem \eqref{eq1.2} and its variants have been studied extensively in the literature. Bernstein obtains the global stability result in \cite{Bernstein1940BASUS}, which has been generalized to arbitrary dimension $N\geq 1$ by Poho\v{z}aev in \cite{Pohozaev1975MS}. We also point out that such problems may describe a process of some biological systems dependent on the average of itself, such as the density of population (see e.g. \cite{Arosio-Panizzi1996TAMS}).
After Lions \cite{MR519648} introducing an abstract functional framework to this problem, this type of problem has received
much attention.
We refer to e.g. \cite{Chen-Kuo-Wu2011JDE,Perera-Zhang2006JDE,Shuai2015JDE} and to e.g. \cite{Deng-Peng-Shuai2015JFA,MR3218834,MR3360660,MR4293909,MR3412403,MR4305432,MR4021897,MR3200382} for mathematical researches on Kirchhoff type equations on bounded domains and in the whole space, respectively.
We also refer to \cite{MR3987384} for a recent survey of the results connected to this model.
\par
On the other hand, the interest in generalizing the model introduced by Kirchhoff to the fractional case does
not arise only for mathematical purposes. In fact, following the ideas of \cite{MR2675483} and the concept
of fractional perimeter, Fiscella and Valdinoci proposed in \cite{MR3120682} an equation describing the
behaviour of a string constrained at the extrema in which appears the fractional length of the rope. Recently, problem similar to \eqref{eq1.1} has been extensively investigated by many authors using different techniques and producing several relevant results  (see, e.g. \cite{MR4071927,MR4056169,MR4241293,MR4305431,MR4245633,MR3985380,Gu-Yang,MR3877201,MR3961733,MR3917341,MR3993416}).
\par
From the viewpoint of calculus of variation, the fractional Kirchhoff problem \eqref{eq1.1} is much more complex and difficult than the
classical fractional Laplacian equation as the appearance of the term $b\big({\int_{\mathbb{R}^{N}}}|(-\Delta)^{\frac{s}{2}}u|^2dx\big)(-\Delta )^su$, which is of order four.
This fact leads to difficulty in obtaining the boundness of the $(PS)$ sequence for the corresponding energy functional if $p\leq3$.
Recently, R\u{a}dulescu and Yang \cite{R-Yang} established uniqueness and nondegeneracy for positive solutions to Kirchhoff equations with subcritical growth. More precisely,
they proved that the following fractional Kirchhoff equation
\begin{equation}\label{eq1.3}
\Big(a+b{\int_{\mathbb{R}^{N}}}|(-\Delta)^{\frac{s}{2}}u|^2dx\Big)(-\Delta)^su+mu=u^p,\quad \text{in}\ \mathbb{R}^{N},
\end{equation}
where $a,b,m>0$, $\frac{N}{4}<s<1$, $1<p<2^*_s-1$, has a unique nondegenerate positive radial solution.  One of the main idea is based on the scaling technique which allows us to find a relation between solutions of  \eqref{eq1.3} and the following equation
\begin{equation}\label{eq1.4}
(-\Delta)^{s} Q+Q=Q^{p} , \quad \text { in } \mathbb{R}^{N}
\end{equation}
where $0<s<1$ and $1<p<2_{s}^{*}-1$. For high dimension and critical case we refer to \cite{Gu-Yang1,Yang2,Yang,Yang-Yu}.
We first summarize the main results in \cite{R-Yang} for convenience.
\begin{Pro}\label{Pro1.1}Let $a,b,m>0$ Assume that $\frac{N}{4}<s<1$ and $1<p<2_s^*-1$. Then
equation \eqref{eq1.3} has a ground state solution $U\in H^s(\mathbb{R}^N)$,
\begin{itemize}
  \item[$(i)$] $U>0$ belongs to $C^\infty(\mathbb{R}^N)\cap H^{2s+1}(\mathbb{R}^N)$;
  \item[$(ii)$] there exist some $x_0\in \mathbb{R}^N$ such that
  $U(\cdot-x_0)$ is radial and strictly decreasing in $r=|x-x_0|$;
  \item[$(iii)$] there exist constants $C_1,C_2>0$ such that
\begin{equation*}
\frac{C_1}{1+|x|^{N+2s}}\leq U(x)\leq \frac{C_2}{1+|x|^{N+2s}},\quad \forall \ x\in \mathbb{R}^N.
\end{equation*}
  \end{itemize}
\end{Pro}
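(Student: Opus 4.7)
The plan is to combine a variational existence argument with the scaling reduction to the limit equation \eqref{eq1.4} already flagged by the authors, for which properties analogous to (i)--(iii) are classical.

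For existence, introduce the natural energy functional
\begin{equation*}
J(u)=\frac{a}{2}\int_{\R^N}|(-\Delta)^{s/2}u|^2\,dx+\frac{b}{4}\Big(\int_{\R^N}|(-\Delta)^{s/2}u|^2\,dx\Big)^2+\frac{m}{2}\int_{\R^N}u^2\,dx-\frac{1}{p+1}\int_{\R^N}(u^+)^{p+1}\,dx
\end{equation*}
on $H^s(\R^N)$. For $p>3$ a mountain-pass on the Nehari manifold yields a ground state in the standard way. For $1<p\le 3$, which is admissible here precisely because $\tfrac{N}{4}<s<1$, the Nehari identity alone does not control $\int_{\R^N}|(-\Delta)^{s/2}u|^2\,dx$; I would therefore minimize on the Nehari--Poho\v{z}aev manifold obtained by simultaneously imposing the Nehari identity and the Poho\v{z}aev identity of \eqref{eq1.3}. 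Under $N<4s$ this manifold is coercive and the constrained minimization attains its infimum up to translations, with compactness recovered by a standard concentration--compactness argument in the fractional setting.

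Given a ground state $U$, set $t:=\int_{\R^N}|(-\Delta)^{s/2}U|^2\,dx$, so that $U$ solves the linear-coefficient equation
\begin{equation*}
(a+bt)(-\Delta)^s U+mU=U^p\quad\text{in }\R^N.
\end{equation*}
The dilation $U(x)=\alpha Q(\beta x)$ with $\alpha=m^{1/(p-1)}$ and $\beta=(m/(a+bt))^{1/(2s)}$ transforms this into \eqref{eq1.4}, while the very definition of $t$ becomes a scalar compatibility equation of the form $t=\alpha^2\beta^{2s-N}\int_{\R^N}|(-\Delta)^{s/2}Q|^2\,dy$; the exponent $2s-N<0$ guaranteed by $\tfrac{N}{4}<s<1$ makes the right-hand side sublinear in $(a+bt)^{-1}$, so one easily obtains a unique positive solution $t$ by a crossing argument.

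Properties (i)--(iii) then transfer from $Q$ to $U$ along the dilation. The statements $Q\in C^\infty(\R^N)\cap H^{2s+1}(\R^N)$, $Q>0$, radial symmetry and strict monotonicity after a translation, and the sharp two-sided decay $C_1(1+|x|)^{-(N+2s)}\le Q\le C_2(1+|x|)^{-(N+2s)}$ are the classical content of Frank--Lenzmann--Silvestre for \eqref{eq1.4}, proved respectively by bootstrapping fractional Schauder estimates, the fractional moving-plane method, and the fundamental solution asymptotics of $(-\Delta)^s+1$. Each carries over to $U$ with constants absorbing the powers of $\alpha$ and $\beta$, and strict positivity may also be verified directly from the strong maximum principle applied to \eqref{eq1.3}. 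The main obstacle is the existence step across the full subcritical range, particularly $1<p\le 3$, where the quartic Kirchhoff term prevents the usual Nehari approach from yielding bounded Palais--Smale sequences; the Nehari--Poho\v{z}aev manifold formulation, together with the restriction $N<4s$ ensuring its coercivity and the solvability of the scalar compatibility equation for $t$, is the essential technical device. Once existence is secured the scaling reduction is routine and (i)--(iii) are immediate.
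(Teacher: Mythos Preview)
The paper does not actually prove Proposition~\ref{Pro1.1}; it is quoted from the authors' earlier work \cite{R-Yang}, and the only hint given here is the scaling formula $U(x)=m^{1/(p-1)}Q\bigl(m^{1/(2s)}\mathcal{E}_0^{-1/(2s)}x\bigr)$ with $\mathcal{E}_0=a+b\|(-\Delta)^{s/2}U\|_2^2$. Your proposal is consistent with that route and is essentially correct, but it is organized more laboriously than necessary. You first run a variational existence argument (Nehari for $p>3$, Nehari--Poho\v{z}aev for $1<p\le 3$) and only afterwards invoke the scaling to transfer (i)--(iii) from $Q$. The approach indicated by the paper reverses the order and bypasses the variational step entirely: one solves the scalar compatibility equation for $t=\|(-\Delta)^{s/2}U\|_2^2$ first, \emph{defines} $U$ as the corresponding dilation of $Q$, and then $U$ automatically satisfies \eqref{eq1.3} and inherits (i)--(iii) from the Frank--Lenzmann--Silvestre results on $Q$. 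Since every positive $H^s$ solution of \eqref{eq1.3} must, by the same scaling, come from a positive solution of \eqref{eq1.4}---which is unique up to translation---and from a solution of the scalar equation---which is unique under $N<4s$---the constructed $U$ is the only positive solution and hence the ground state. This buys you existence across the full range $1<p<2_s^*-1$ without ever touching the delicate Nehari--Poho\v{z}aev machinery.

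Two small slips in your write-up: the inequality $2s-N<0$ is not a consequence of $\tfrac{N}{4}<s<1$ (that gives $N<4s$, not $N>2s$); and the phrase ``sublinear in $(a+bt)^{-1}$'' is garbled---the right-hand side of the compatibility equation is proportional to $(a+bt)^{(N-2s)/(2s)}$, and what you need for the crossing argument is that this exponent is strictly less than $1$, i.e.\ $N<4s$, which is exactly the standing hypothesis. Neither slip affects the substance.
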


\begin{Pro}\label{Pro1.2}
Under the assumptions of Proposition \ref{Pro1.1}, the ground state solution $U$ of \eqref{eq1.3} is unique up to
 translation. Moreover, $U$ is nondegenerate in $H^s(\mathbb{R}^N)$ in the sense that there holds
$$\ker L_+=span\{\partial_{x_1}U, \partial_{x_2}U,\cdots,  \partial_{x_N}U\},$$
where $L_+$ is defined as
\begin{equation*}
  L_+\varphi=\Big(a+b{\int_{\mathbb{R}^{N}}}|(-\Delta
)^{\frac{s}{2}}U|^2dx\Big)(-\Delta
)^s\varphi+m\varphi-pU^{p-1}\varphi+2b\Big({\int_{\mathbb{R}^{N}}}(-\Delta
)^{\frac{s}{2}}U(-\Delta )^{\frac{s}{2}}\varphi dx\Big)(-\Delta )^sU
\end{equation*}
acting on $L^2(\mathbb{R}^N)$ with domain $H^s(\mathbb{R}^N)$.
\end{Pro}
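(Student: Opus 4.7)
The plan is to exploit the scaling symmetry $u\mapsto \lambda u(\mu\,\cdot)$ of \eqref{eq1.3} to reduce both assertions to the corresponding known statements for the fractional Schr\"odinger equation \eqref{eq1.4}, which are due to Frank--Lenzmann--Silvestre. First I would look for $\lambda,\mu>0$ so that $Q(y):=\lambda^{-1}U(\mu^{-1}y)$ solves \eqref{eq1.4}; substitution and matching of coefficients forces $\lambda^{p-1}=m$ and $A(U)\mu^{2s}=m$, where $A(U):=a+b\|(-\Delta)^{s/2}U\|_2^2$. Since $\|(-\Delta)^{s/2}U\|_2^2=\lambda^2\mu^{2s-N}\|Q\|_s^2$, eliminating $\mu$ collapses the system to the single scalar equation
\[
A=a+\kappa\, A^{(N-2s)/(2s)},\qquad \kappa:=b\,m^{\frac{2}{p-1}-\frac{N-2s}{2s}}\|Q\|_s^2 .
\]
The hypothesis $N<4s$ forces the exponent to be strictly less than $1$, and a short case analysis on the sign of $N-2s$ shows that this equation admits exactly one positive root $A$. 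Combined with uniqueness of $Q$ up to translation, this yields uniqueness of $U$ up to translation.

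For the nondegeneracy, pick $\varphi\in H^s(\mathbb{R}^N)$ with $L_+\varphi=0$ and write $\varphi(x)=\lambda\psi(\mu x)$ with the same $\lambda,\mu$. Plugging in, using $A\mu^{2s}=m$ and $\lambda^{p-1}=m$, and rescaling $y=\mu x$, the equation $L_+\varphi=0$ becomes
\[
L_+^Q\psi=c\,(-\Delta)^s Q,\qquad L_+^Q\chi:=(-\Delta)^s\chi+\chi-pQ^{p-1}\chi,
\]
where $c$ is a scalar proportional to $\langle Q,\psi\rangle_s$, explicitly $c=-\tfrac{2(A-a)}{A\|Q\|_s^2}\langle Q,\psi\rangle_s$. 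The next step is to produce a particular solution. Differentiating the scaling identity $(-\Delta)^s(u(\tau\,\cdot))=\tau^{2s}\bigl((-\Delta)^s u\bigr)(\tau\,\cdot)$ at $\tau=1$ gives the commutator relation $[(-\Delta)^s,x\cdot\nabla]=2s(-\Delta)^s$, which together with $x\cdot\nabla(Q^p)=pQ^{p-1}(x\cdot\nabla Q)$ and $(-\Delta)^sQ+Q=Q^p$ yields the key identity $L_+^Q(x\cdot\nabla Q)=2s(-\Delta)^s Q$. Hence $\psi_p:=\frac{c}{2s}x\cdot\nabla Q$ is a particular solution, and by the nondegeneracy of $Q$ the general solution is $\psi=\frac{c}{2s}x\cdot\nabla Q+\sum_{i=1}^N c_i\,\partial_{x_i}Q$.

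To close the loop I would substitute this back into the formula defining $c$. Since $\langle Q,\partial_{x_i}Q\rangle_s=0$ by the radial symmetry of $Q$, and the fractional Pohoz\v{a}ev identity gives $\int(x\cdot\nabla Q)(-\Delta)^sQ\,dx=-\tfrac{N-2s}{2}\|Q\|_s^2$, the self-consistency condition collapses to $c\,\bigl[A(4s-N)+a(N-2s)\bigr]=0$. Since $A\geq a>0$ and $N<4s$, the bracket is strictly positive in every subcase of the sign of $N-2s$ (using $A\geq a$ when $N<2s$), so $c=0$, meaning $\psi\in\mathrm{span}\{\partial_{x_i}Q\}$. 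Unwinding the scaling then gives $\varphi\in\mathrm{span}\{\partial_{x_i}U\}$, as claimed. The main obstacle I anticipate is the rigorous justification of the commutator identity $L_+^Q(x\cdot\nabla Q)=2s(-\Delta)^sQ$ and the Pohoz\v{a}ev computation in the fractional setting, since the formal manipulations must be validated against the slow $|x|^{-(N+2s)}$ decay of $Q$ supplied by Proposition~\ref{Pro1.1}(iii); it is precisely here that the subcritical condition $N<4s$ enters, as the sign of $4s-N$ is exactly what prevents a nontrivial Kirchhoff-induced direction from entering the kernel of $L_+$.
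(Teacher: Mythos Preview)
The paper does not actually prove Proposition~\ref{Pro1.2}; it is quoted from the companion paper \cite{R-Yang} and only the scaling relation $U(x)=m^{\frac{1}{p-1}}Q\bigl(m^{\frac{1}{2s}}\mathcal{E}_0^{-\frac{1}{2s}}x\bigr)$ with $\mathcal{E}_0=a+b\|(-\Delta)^{s/2}U\|_2^2$ is recalled (see the paragraph following \eqref{eq2.1}). Your proposal therefore cannot be compared against a proof in this paper, but it does reconstruct exactly the scaling strategy the authors attribute to \cite{R-Yang}, and the argument is correct.

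A few specific points worth noting. Your scalar equation $A=a+\kappa A^{(N-2s)/(2s)}$ and its unique positive root under $N<4s$ is exactly the mechanism that fixes $\mathcal{E}_0$, and hence $U$, uniquely; this matches the paper's description. For the nondegeneracy, your derivation of $L_+^Q(x\cdot\nabla Q)=2s(-\Delta)^sQ$ via differentiation of the scaling law is the standard route and is rigorous here because Proposition~\ref{Pro1.1}(i) gives $Q\in H^{2s+1}$ with the polynomial decay of (iii), which is more than enough to justify both the commutator computation and the Pohoz\v{a}ev identity $\int(x\cdot\nabla Q)(-\Delta)^sQ\,dx=-\tfrac{N-2s}{2}\|Q\|_s^2$. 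The closing self-consistency step $c\bigl[A(4s-N)+a(N-2s)\bigr]=0$ is clean, and your observation that the bracket is strictly positive in all sign regimes of $N-2s$ (using $A\geq a$ when $N<2s$) is precisely where the hypothesis $N<4s$ is consumed. So the proposal is sound and agrees with the method the paper indicates.
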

\par
By Proposition \ref{Pro1.2}, it is now possible that we apply Lyapunov-Schmidt reduction to study the perturbed fractional Kirchhoff equation \eqref{eq1.1}. We want to look for solutions of \eqref{eq1.1} in the Sobolev space $H^s(\mathbb{R}^N)$ for sufficiently small $\varepsilon$, which named semiclassical solutions. We also call such derived solutions as concentrating solutions since they will concentrate at certain point of the potential function $V$. In fact, this method can be traced back to Floer and Weinstein 's work (\cite{MR867665}) on the Schr\"{o}dinger equation.  Later, Oh \cite{MR970154,MR1065671} generalized Floer–Weinstein’s results to higher dimension and obtained the existence of positive multi-bump solutions concentrating at any given set
of nondegenerate critical points of $V(x)$ as $\varepsilon\rightarrow0$. Also, the existence of a single-peak solution concentrating at the critical point of $V(x)$ which may be degenerate as $\varepsilon\rightarrow0$ was obtained by Ambrosetti et al. \cite{MR1486895}.
\par
Now, it is expected that this approach can deal with problem \eqref{eq1.1} for all $1<p<2^*_s-1$, in a unified way.
To state our following results, we first fix some notations that will be used throughout the paper. For $\varepsilon>0$ and $y=\left(y_{1}, y_{2},\cdots y_{N}\right) \in \mathbb{R}^{N}$, write
\begin{equation*}
U_{\varepsilon, y}(x)=U\left(\frac{x-y}{ \varepsilon}\right), \quad x \in \mathbb{R}^{N}.
\end{equation*}
Assume that $V: \mathbb{R}^{N} \rightarrow \mathbb{R}$ satisfies the following conditions:
\begin{itemize}
  \item [$(V_1)$] $V$ is a bounded continuous function with $\inf\limits_{x \in \mathbb{R}^{N}} V>0$;
  \item [$(V_2)$] There exist $x_{0} \in \mathbb{R}^{N}$ and $r_{0}>0$ such that
\begin{equation*}
V\left(x_{0}\right)<V(x) \quad \text { for } 0<\left|x-x_{0}\right|<r_{0}
\end{equation*}
and $V \in C^{\alpha}\left(\bar{B}_{r_{0}}\left(x_{0}\right)\right)$ for some $0<\alpha<\frac{N+4 s}{2}$. That is, $V$ is of $\alpha$-th order H\"{o}lder continuity around $x_{0}$;
\item [$(V_3)$] There exist $m>1$ and $\delta>0$ such that
\begin{equation*}
\begin{cases}V(x)=V\left(x_{0}\right)+\sum\limits_{i=1}^{N} c_{i}\left|x_{i}-x_{0, i}\right|^{m}+O\left(\left|x-x_{0}\right|^{m+1}\right), & x \in B_{\delta}\left(x_{0}\right) \\ \frac{\partial V}{\partial x_{i}}=m c_{i}\left|x_{i}-x_{0, i}\right|^{m-2}\left(x_{i}-x_{0, i}\right)+O\left(\left|x-x_{0}\right|^{m}\right), & x \in B_{\delta}\left(x_{0}\right)\end{cases}
\end{equation*}
where $c_{i} \in \mathbb{R}$ and $c_{i} \neq 0$ for $i=1,2,\cdots,N$.
\end{itemize}
Without loss of generality, we assume $x_0=0$ for simplicity. The assumption $(V_1)$ allows us to introduce the inner products
\begin{equation*}
\langle u, v\rangle_{\varepsilon}=\int_{\mathbb{R}^{N}}\left(\varepsilon^{2s} a (-\Delta)^{\frac{s}{2}} u \cdot(-\Delta)^{\frac{s}{2}} v+V(x) u v\right)dx
\end{equation*}
for $u, v \in H^{s}\left(\mathbb{R}^{N}\right)$. We also write
\begin{equation*}
H_{\varepsilon}=\left\{u \in H^{s}\left(\mathbb{R}^{N}\right):\|u\|_{\varepsilon}=\langle u, u\rangle_{\varepsilon}^{\frac{1}{2}}<\infty\right\}.
\end{equation*}
Now we state the existence result as follows.
\begin{Thm}\label{Thm1.1}
Under the assumptions of Proposition \ref{Pro1.1} and assume that $V$ satisfies $(V_1)-(V_3)$. Then there exists $\varepsilon_{0}>0$ such that for all $\varepsilon \in\left(0, \varepsilon_{0}\right)$,
\begin{itemize}
  \item [$(i)$] problem \eqref{eq1.1} has a positive solution $u_{\varepsilon}$, which has a global maximum point $\eta_\varepsilon\in\mathbb{R}^N$ such that $\lim\limits_{\varepsilon\rightarrow0}V(\eta_\varepsilon)=V(x_0)$;
  \item  [$(ii)$] If $u_{\varepsilon}^{(i)}, i=1,2$, are two solutions derived as above, then
\begin{equation*}
u_{\varepsilon}^{(1)} \equiv u_{\varepsilon}^{(2)}
\end{equation*}
holds for $\varepsilon$ sufficiently small.
\end{itemize}
Moreover, let
\begin{equation*}
u_{\varepsilon}=U\left(\frac{x-y_{\varepsilon}}{\varepsilon}\right)+\varphi_{\varepsilon}.
\end{equation*}
be the unique solution, then there hold
\begin{equation*}
\begin{gathered}
\left|y_{\varepsilon}\right|=o(\varepsilon), \\
\left\|\varphi_{\varepsilon}\right\|_{\varepsilon}=O\left(\varepsilon^{\frac{N}{2}+m(1-\tau)}\right),
\end{gathered}
\end{equation*}
for some $0<\tau<1$ sufficiently small.
\end{Thm}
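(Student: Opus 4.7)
The plan is to derive all three statements from a single Lyapunov--Schmidt reduction, built on the ground state $U$ of the limit problem \eqref{eq1.3} (with $m=V(0)$) and on the nondegeneracy provided by Proposition \ref{Pro1.2}. After rescaling $x=\varepsilon z+y$, equation \eqref{eq1.1} becomes
\begin{equation*}
\Big(a+b\int_{\mathbb{R}^{N}}|(-\Delta)^{s/2}u|^{2}\,dz\Big)(-\Delta)^{s}u+V(\varepsilon z+y)u=u^{p},
\end{equation*}
a small perturbation of \eqref{eq1.3} when $y$ lies in a neighborhood of $x_{0}=0$. For each such $y$ I look for a solution of \eqref{eq1.1} in the form $u=U_{\varepsilon,y}+\varphi$, with $\varphi$ restricted to the $\langle\cdot,\cdot\rangle_{\varepsilon}$-orthogonal complement $E_{\varepsilon,y}^{\perp}$ of $E_{\varepsilon,y}:=\mathrm{span}\{\partial_{x_{i}}U_{\varepsilon,y}:i=1,\dots,N\}$. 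By Proposition \ref{Pro1.2} the linearization of the Kirchhoff operator (including the nonlocal cross term arising from $b\int|(-\Delta)^{s/2}U|^{2}$) is invertible on $E_{\varepsilon,y}^{\perp}$ with a bound independent of $\varepsilon$ and $y$, so a contraction mapping argument produces a unique $\varphi=\varphi_{\varepsilon}(y)$ whose size is governed by the error $V(\varepsilon z+y)-V(0)$ measured in a weighted norm adapted to the polynomial decay of $U$ from Proposition \ref{Pro1.1}(iii). Feeding in $(V_{3})$ gives $\|\varphi_{\varepsilon}(y)\|_{\varepsilon}=O(\varepsilon^{N/2+m(1-\tau)})$ whenever $|y|=o(\varepsilon)$, which is the second estimate announced in the Moreover part.

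Critical points of the reduced functional $I_{\varepsilon}(y):=J_{\varepsilon}(U_{\varepsilon,y}+\varphi_{\varepsilon}(y))$ in this neighborhood correspond to genuine solutions of \eqref{eq1.1}. A Taylor expansion, combined with $(V_{2})$--$(V_{3})$, shows that after the proper normalization $I_{\varepsilon}(y)$ is equivalent to the effective potential
\begin{equation*}
\Phi_{\varepsilon}(y)=\int_{\mathbb{R}^{N}}\bigl[V(\varepsilon z+y)-V(0)\bigr]U^{2}(z)\,dz+\text{lower order},
\end{equation*}
which, by $(V_{2})$, admits a local minimum $y_{\varepsilon}$ with $|y_{\varepsilon}|=o(\varepsilon)$. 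This yields the existence in (i); positivity and the existence of a unique global maximum $\eta_{\varepsilon}$ satisfying $V(\eta_{\varepsilon})\to V(x_{0})$ follow from the pointwise decay of $U$, the smallness of $\varphi_{\varepsilon}$, and standard moving plane and maximum principle arguments.

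For the local uniqueness, suppose $u_{\varepsilon}^{(1)}\neq u_{\varepsilon}^{(2)}$ are two solutions produced above, centered at $y_{\varepsilon}^{(1)},y_{\varepsilon}^{(2)}$, and set
\begin{equation*}
\xi_{\varepsilon}(x)=\frac{u_{\varepsilon}^{(1)}(x)-u_{\varepsilon}^{(2)}(x)}{\|u_{\varepsilon}^{(1)}-u_{\varepsilon}^{(2)}\|_{L^{\infty}(\mathbb{R}^{N})}}.
\end{equation*}
The rescaled function $\tilde{\xi}_{\varepsilon}(z)=\xi_{\varepsilon}(\varepsilon z+y_{\varepsilon}^{(1)})$ is bounded and, by uniform regularity for the extension problem together with Proposition \ref{Pro1.2}, converges along a subsequence to some $\xi_{0}=\sum_{j=1}^{N}b_{j}\partial_{x_{j}}U$. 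To force $b_{j}=0$ I use the local Pohoz\v{a}ev identity announced in the abstract: via the Caffarelli--Silvestre $s$-harmonic extension each $u_{\varepsilon}^{(i)}$ is lifted to $\mathbb{R}^{N+1}_{+}$, and testing the extended equation against $\partial_{x_{j}}$ on a ball of fixed radius $d$ around $y_{\varepsilon}^{(1)}$ produces an identity schematically of the form
\begin{equation*}
\int_{B_{d}}\partial_{x_{j}}V\cdot u_{\varepsilon}^{(1)}u_{\varepsilon}^{(2)}\,dx=\text{(boundary and Kirchhoff remainder terms)}.
\end{equation*}
Plugging in the expansion of $V$ from $(V_{3})$ and the decay of $U$ from Proposition \ref{Pro1.1}(iii), and rescaling, the leading term reduces to a nonzero multiple of $c_{j}b_{j}$, while all remainders are of strictly lower order. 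Since $c_{j}\neq 0$ this gives $b_{j}=0$, so $\tilde{\xi}_{\varepsilon}\to 0$ locally uniformly; the decay inherited from $U$ then propagates this to $\|\xi_{\varepsilon}\|_{L^{\infty}}\to 0$, contradicting the normalization.

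The genuine obstacle is precisely the derivation of a local Pohoz\v{a}ev identity sharp enough to isolate the coefficients $b_{j}$ in the presence of the Kirchhoff factor $b\int|(-\Delta)^{s/2}u|^{2}$, which couples the two solutions through nonlocal cross terms. These are naturally handled in the extension formulation, where the Gagliardo seminorm becomes a weighted Dirichlet energy on $\mathbb{R}^{N+1}_{+}$ amenable to classical Pohoz\v{a}ev manipulations, but the accounting of the boundary contributions on $\partial B_{d}$, the control of the Kirchhoff cross-coupling, and the use of Proposition \ref{Pro1.1}(iii) to discard tails all demand care. Once this identity is in hand, the first estimate $|y_{\varepsilon}|=o(\varepsilon)$ in the Moreover part follows either by applying a similar Pohoz\v{a}ev identity directly to $u_{\varepsilon}$ and invoking $(V_{3})$, or from a refined analysis of the critical point equation for $I_{\varepsilon}(y)$.
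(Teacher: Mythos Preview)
Your proposal is correct and follows essentially the same architecture as the paper: Lyapunov--Schmidt reduction on $E_{\varepsilon,y}$ for existence, a local Pohoz\v{a}ev identity obtained via the Caffarelli--Silvestre extension to pin down $|y_\varepsilon|=o(\varepsilon)$ (and hence the sharp bound on $\|\varphi_\varepsilon\|_\varepsilon$), and then the normalized-difference argument with $\bar\xi_\varepsilon\to\sum d_j\partial_{x_j}U$ combined with the same Pohoz\v{a}ev identity to force $d_j=0$. Two small discrepancies worth noting: the paper derives $|y_\varepsilon|=o(\varepsilon)$ \emph{first} via the Pohoz\v{a}ev identity (not from the reduced functional under $(V_2)$ alone, which only yields $y_\varepsilon\to x_0$) and then feeds this into the uniqueness step, and the concentration/decay analysis is done by Moser iteration plus a barrier comparison rather than moving planes; otherwise your outline matches the paper's proof.
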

\par
As in \cite{R-Yang}, we only need the conditions $(V_1)$ and $(V_2)$ to obtain the existence of the semiclassical solutions by Lyapunov-Schmidt reduction. To prove the local uniqueness, we will follow the idea of \cite{MR3426103}. More precisely, if $u_{\varepsilon}^{(i)}$, $i=1,2$, are two distinct solutions, then it is clear that the function
\begin{equation*}
\xi_{\varepsilon}=\frac{u_{\varepsilon}^{(1)}-u_{\varepsilon}^{(2)} }{\left\|u_{\varepsilon}^{(1)}-u_{\varepsilon}^{(2)}\right\|_{L^{\infty}\left(\mathbb{R}^{N}\right)}}
\end{equation*}
satisfies $\left\|\xi_{\varepsilon}\right\|_{L^{\infty}\left(\mathbb{R}^{N}\right)}=1$. We will show, by using the equations satisfied by $\xi_{\varepsilon}$, that $\left\|\xi_{\varepsilon}\right\|_{L^{\infty}\left(\mathbb{R}^{N}\right)} \rightarrow$ 0 as $\varepsilon \rightarrow 0$. This gives a contradiction, and thus follows the uniqueness. To deduce the contradiction, we will need quite delicate estimates on the asymptotic behaviors of solutions and the concentrating point $y_{\varepsilon}$. Our results extend the results in \cite{MR4021897} to the fractional Kirchhoff problem. But one will see that it is much more difficult to obtain some of the estimates due to the presence of the nonlocal term $\left(\int_{\mathbb{R}^{N}}|(-\Delta)^{\frac{s}{2}} u|^{2}\right)(-\Delta)^s u$ and the fractional operator.

\par
This paper is organized as follows. We prove the existence of semiclassical solutions in Section 2 and consider their concentration behavior in Section 3. In Section 4, we give the local Pohoz\v{a}ev identity and some basic estimates which will be used later. Finally, we finish the proof of local uniqueness in Theorem \ref{Thm1.1}.
\par
\vspace{3mm}
{\bf Notation.~}Throughout this paper, we make use of the following notations.
\begin{itemize}
\item[$\bullet$]  For any $R>0$ and for any $x\in \mathbb{R}^N$, $B_R(x)$ denotes the ball of radius $R$ centered at $x$;
\item[$\bullet$]  $\|\cdot\|_q$ denotes the usual norm of the space $L^q(\mathbb{R}^N),1\leq q\leq\infty$;
\item[$\bullet$]  $o_n(1)$ denotes $o_n(1)\rightarrow 0$ as $n\rightarrow\infty$;
\item[$\bullet$]  $C$ or $C_i(i=1,2,\cdots)$ are some positive constants may change from line to line.
\end{itemize}

\section{Semiclassical solutions for the fractional Kirchhoff equation}
In this section, we mainly prove the existence of semiclassical solution for the fractional Kirchhoff equation \eqref{eq1.1} via Lyapunov-Schmidt reduction method. In fact, all the results in the following has been obtained in \cite{R-Yang} under the condition $(V_1)-(V_2)$. We recall the result and give the outline for the proof for convenience.
\begin{Thm}\label{Thm2.1}
Let $a,b>0$. Assume that $\frac{N}{4}<s<1$ and $1<p<2_s^*-1$, and assume that $V$ satisfies $(V_1)$ and $(V_2)$. Then there exists $\varepsilon_{0}>0$ such that for all $\varepsilon \in\left(0, \varepsilon_{0}\right)$, problem \eqref{eq1.1} has a solution $u_{\varepsilon}$ of the form
\begin{equation*}
u_{\varepsilon}=U\left(\frac{x-y_{\varepsilon}}{\varepsilon}\right)+\varphi_{\varepsilon}
\end{equation*}
with $y_\varepsilon\in\mathbb{R}^N$, $\varphi_{\varepsilon} \in H_{\varepsilon}$, satisfying
\begin{equation*}
\begin{aligned}
y_{\varepsilon} & \rightarrow x_{0}, \\
\left\|\varphi_{\varepsilon}\right\|_{\varepsilon} &=o\left(\varepsilon^{\frac{N}{2}}\right) ,\\
\end{aligned}
\end{equation*}
as $\varepsilon \rightarrow 0$ .
\end{Thm}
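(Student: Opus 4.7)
The plan is to run a standard Lyapunov--Schmidt reduction around the ground state $U$ from Proposition \ref{Pro1.1}, using the non-degeneracy from Proposition \ref{Pro1.2} as the invertibility input. First I would rescale by $u(x) = v((x-y)/\varepsilon)$, which removes all $\varepsilon$-factors from the operator and transforms \eqref{eq1.1} into
\begin{equation*}
\Big(a + b\int_{\mathbb{R}^N}|(-\Delta)^{s/2} v|^2 dz\Big)(-\Delta)^s v + V(\varepsilon z + y)\, v = v^p,
\end{equation*}
a small perturbation (as $\varepsilon \to 0$) of \eqref{eq1.3} with mass $m = V(y)$. This makes it natural to work in the scaled space $H_\varepsilon$ with approximate solutions $U_{\varepsilon,y}(x) = U((x-y)/\varepsilon)$ and to look for genuine solutions of the form $u = U_{\varepsilon,y} + \varphi$.

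Next I would introduce the $N$-dimensional tangent space $T_{\varepsilon,y} = \mathrm{span}\{\partial_{y_i} U_{\varepsilon,y} : i = 1,\dots,N\}$ (the natural translation modes) and split $H_\varepsilon = T_{\varepsilon,y} \oplus T_{\varepsilon,y}^\perp$ with respect to $\langle \cdot,\cdot\rangle_\varepsilon$. For each parameter $y$ in a bounded neighborhood of $x_0$, I would solve the \emph{auxiliary equation}, i.e.\ project the full nonlinear equation onto $T_{\varepsilon,y}^\perp$ and look for $\varphi = \varphi_{\varepsilon,y} \in T_{\varepsilon,y}^\perp$. The linearized operator at $U_{\varepsilon,y}$ is a rescaled version of $L_+$ in Proposition \ref{Pro1.2}, whose kernel consists exactly of the translation modes; hence it is uniformly invertible on $T_{\varepsilon,y}^\perp$. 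Combined with estimates on the error $F_\varepsilon(U_{\varepsilon,y})$ (which is small thanks to $(V_1)$--$(V_2)$ and the decay from Proposition \ref{Pro1.1}(iii)), a contraction mapping / Banach fixed point argument produces a unique small $\varphi_{\varepsilon,y}$ with $\|\varphi_{\varepsilon,y}\|_\varepsilon = o(\varepsilon^{N/2})$, smoothly depending on $y$.

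Then I would solve the \emph{bifurcation equation} by finding $y = y_\varepsilon$ for which the $T_{\varepsilon,y}$-projection of the residual also vanishes. Equivalently, I would seek a critical point of the reduced functional $I_\varepsilon(y) := \mathcal{I}_\varepsilon(U_{\varepsilon,y} + \varphi_{\varepsilon,y})$, where $\mathcal{I}_\varepsilon$ is the energy for \eqref{eq1.1}. A direct expansion, using $U^{p+1}$-decay and the fact that the Kirchhoff coupling only contributes a $y$-independent leading term, gives
\begin{equation*}
I_\varepsilon(y) = \varepsilon^N\Big[A_0 + B_0\bigl(V(y) - V(x_0)\bigr) + o(1)\Big]
\end{equation*}
uniformly in $y$ in a small ball, with $A_0, B_0 > 0$. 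Assumption $(V_2)$ says $V$ has a strict local minimum at $x_0$, so minimizing $I_\varepsilon$ on $\overline{B_{r_0}(x_0)}$ produces an interior minimizer $y_\varepsilon \to x_0$, which yields the desired $u_\varepsilon = U_{\varepsilon,y_\varepsilon} + \varphi_{\varepsilon,y_\varepsilon}$.

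The main technical obstacle is the nonlocal Kirchhoff coefficient $\int|(-\Delta)^{s/2} u|^2$, which couples the equation at the quadratic level and modifies the linearization by the extra rank-one piece $2b(\int (-\Delta)^{s/2} U \cdot (-\Delta)^{s/2} \varphi)(-\Delta)^s U$ appearing in the operator $L_+$ of Proposition \ref{Pro1.2}. The non-degeneracy of $L_+$ in Proposition \ref{Pro1.2} is precisely what lets this term be absorbed into the coercivity estimate on $T_{\varepsilon,y}^\perp$; verifying that this coercivity passes to the $\varepsilon$-dependent rescaled operator uniformly in $y$ near $x_0$ — and tracking the Kirchhoff contribution carefully through the expansion of $I_\varepsilon(y)$ — is where the main work lies. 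Everything else is, modulo the nonlocality of $(-\Delta)^s$, the standard Floer--Weinstein / Oh reduction.
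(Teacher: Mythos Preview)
Your proposal is correct and follows essentially the same route as the paper: a Lyapunov--Schmidt reduction with approximate solutions $U_{\varepsilon,y}$, the orthogonal splitting along the translation modes, invertibility of the linearized operator on the complement via Proposition~\ref{Pro1.2}, a fixed-point argument producing $\varphi_{\varepsilon,y}$ with $\|\varphi_{\varepsilon,y}\|_\varepsilon = o(\varepsilon^{N/2})$, and finally minimization of the reduced energy $j_\varepsilon(y)=I_\varepsilon(U_{\varepsilon,y}+\varphi_{\varepsilon,y})$ over a small ball around $x_0$, with the same leading-order expansion $A\varepsilon^N + B\varepsilon^N(V(y)-V(x_0)) + \text{l.o.t.}$ and a comparison argument to force the minimizer into the interior. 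The only cosmetic difference is that the paper fixes $U$ as the ground state of \eqref{eq1.3} with $m=V(x_0)$ throughout (rather than $m=V(y)$), which is exactly what you end up doing once you write $U_{\varepsilon,y}(x)=U((x-y)/\varepsilon)$ with a single $U$.
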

\par
It is known that every solution to Eq. \eqref{eq1.1} is a critical point of the energy functional $I_{\varepsilon}: H_{\varepsilon} \rightarrow \mathbb{R}$, given by
\begin{equation*}
I_{\varepsilon}(u)=\frac{1}{2}\|u\|_{\varepsilon}^{2}+\frac{b\varepsilon^{4s-N}}{4}\left(\int_{\mathbb{R}^{N}}|(-\Delta)^{\frac{s}{2}} u|^{2}dx\right)^{2}-\frac{1}{p+1} \int_{\mathbb{R}^{N}} u^{p+1}dx
\end{equation*}
for $u \in H_{\varepsilon}$. It is standard to verify that $I_{\varepsilon} \in C^{2}\left(H_{\varepsilon}\right) .$ So we are left to find a critical point of $I_{\varepsilon}$.  However, due to the presence of the double nonlocal terms $(-\Delta)^s$ and $\left(\int_{\mathbb{R}^{N}}|(-\Delta)^{\frac{s}{2}} u|^{2}\right)$, it requires more careful estimates on the orders of $\varepsilon$ in the procedure. In particular, the nonlocal terms brings new difficulties in the higher order remainder term, which is more complicated than the case of the fractional Schr\"{o}dinger equation or usual Kirchhoff equation.
\par
Denote $Q$ be the solution of the following equation:
\begin{equation*}
\begin{cases}
(-\Delta)^{s} u+u=u^{p}, & \\
u(0)=\max\limits_{x \in \mathbb{R}^{N}} u(x), & \\
u>0, x \in \mathbb{R}^{N}.
\end{cases}
\end{equation*}
Then, it is easy to see that the function
\begin{equation*}
W_{\lambda}(x):=\lambda^{\frac{1}{p}} Q\left(\lambda^{\frac{1}{2 s}} x\right)
\end{equation*}
satisfies the equation
\begin{equation*}
(-\Delta)^{s} u+\lambda u=u^{p}, \quad x \in \mathbb{R}^{N}.
\end{equation*}
Therefore, for any point $\xi \in \mathbb{R}^{N}$, taking $\lambda=V(\xi)$, it follows that $V(\xi)^{\frac{1}{p-1}} W\left(V(\xi)^{\frac{1}{2 s}} x\right)$ satisfies
\begin{equation*}
\varepsilon^{2 s}(-\Delta)^{s} u+V(\xi) u=u^{p}, x \in \mathbb{R}^{N}.
\end{equation*}
By the same idea and the proof of Theorem \ref{Thm1.1}, we have known  $U(x)=m^{\frac{1}{p-1}}Q(m^{\frac{1}{2s}}\mathcal{E}_0^{-\frac{1}{2s}}x)$ is a positive unique solution of \eqref{eq1.3}, where $\mathcal{E}_0=a+b\|(-\Delta)^{\frac{s}{2}}U\|^2_2$. Moreover,
we have the polynomial decay instead of the usual exponential decay of $U$ of and its derivatives (see Section 3). That is,
\begin{equation}\label{eq2.1}
U(x)+|(-\Delta)^{\frac{s}{2}} U(x)| \leq \frac{C}{1+|x|^{N+2s}}, \quad x \in \mathbb{R}^{N}
\end{equation}
for some $C>0$.
\par
Following the idea from Cao and Peng \cite{MR2581983} (see also \cite{MR4021897}) , we will use the unique ground state $U$ of \eqref{eq1.3} with $m=V(x_0)$ to build the solutions of \eqref{eq1.1}. Since the $\varepsilon$-scaling makes it concentrate around $\xi$, this function constitutes a good positive approximate solution to \eqref{eq1.1}.
\par
For $\delta,\eta>0$, fixing $y \in B_{\delta}(x_0)$, we define
\begin{equation*}
M_{\varepsilon,\eta}=\left\{(y, \varphi): y \in B_{\delta}(x_0), \varphi\in E_{\varepsilon,y}, \|\varphi\|^2\leq\eta\varepsilon^{N}\right\}
\end{equation*}
where we denote $E_{\varepsilon,y}$ by
\begin{equation*}
E_{\varepsilon, y}:=\left\{\varphi \in H_{\varepsilon}:\left\langle\frac{\partial U_{\varepsilon, y^{i}}}{\partial y^{i}}, \varphi\right\rangle_{\varepsilon}=0, i=1, \ldots, N\right\}.
\end{equation*}
\par
We will restrict our argument to the existence of a critical point of $I_{\varepsilon}$ that concentrates, as $\varepsilon$ small enough, near the spheres with radii $\frac{r_{0}}{ \varepsilon}$. Thus we
are looking for a critical point of the form
\begin{equation*}
u_\varepsilon=U_{\varepsilon,y}+\varphi_{\varepsilon}
\end{equation*}
where $\varphi_{\varepsilon} \in E_{\varepsilon, y}$, and $\varepsilon y\rightarrow r_{0},\left\|\varphi_{\varepsilon}\right\|^{2}=o\left(\varepsilon^{N}\right)$ as $\varepsilon \rightarrow 0$. For this we introduce a new functional $J_{\varepsilon}:M_{\varepsilon,\eta} \rightarrow \mathbb{R}$ defined by
\begin{equation*}
J_{\varepsilon}(y, \varphi)=I_{\varepsilon}\left(U_{\varepsilon, y}+\varphi\right), \quad \varphi \in E_{\varepsilon, y}.
\end{equation*}
In fact, we divide the proof of first part of Theorem \ref{Thm1.1} into two steps:
\begin{itemize}
  \item [{\bf Step1:}] for each $\varepsilon, \delta$ sufficiently small and for each $y \in B_{\delta}(x_0)$, we will find a critical point $\varphi_{\varepsilon, y}$ for $J_{\varepsilon}(y, \cdot)$ (the function $y \mapsto \varphi_{\varepsilon, y}$ also belongs to the class $C^{1}\left(H_{\varepsilon}\right)$ );
  \item [{\bf Step2:}] for each $\varepsilon, \delta$ sufficiently small, we will find a critical point $y_{\varepsilon}$ for the function $j_{\varepsilon}$ : $B_{\delta}(x_0) \rightarrow \mathbb{R}$ induced by
\begin{equation}\label{eq2.2}
y \mapsto j_{\varepsilon}(y) \equiv J\left(y, \varphi_{\varepsilon, y}\right).
\end{equation}
That is, we will find a critical point $y_{\varepsilon}$ in the interior of $B_{\delta}(x_0)$.
\end{itemize}
\par
It is standard to verify that $\left(y_{\varepsilon}, \varphi_{\varepsilon, y_{\varepsilon}}\right)$ is a critical point of $J_{\varepsilon}$ for $\varepsilon$ sufficiently small by the chain rule. This gives a solution $u_{\varepsilon}= U_{\varepsilon, y_{\varepsilon}}+\varphi_{\varepsilon, y_{\varepsilon}}$ to Eq. \eqref{eq1.1} for $\varepsilon$ sufficiently small in virtue of the following lemma.
\begin{Lem}\label{Lem2.1}
There exist $\varepsilon_{0}, \eta_{0}>0$ such that for $\varepsilon \in\left(0, \varepsilon_{0}\right], \eta \in\left(0, \eta_{0}\right]$, and $(y, \varphi) \in M_{\varepsilon, y}$ the following are equivalent:
\begin{itemize}
  \item [$(i)$] $u_{\varepsilon}= U_{\varepsilon, y_{\varepsilon}}+\varphi_{\varepsilon, y_{\varepsilon}}$ is a critical point of $I_{\varepsilon}$ in $H_{\varepsilon}$.
  \item [$(i)$] $(y, \varphi)$ is a critical point of $J_{\varepsilon}$.
\end{itemize}
\end{Lem}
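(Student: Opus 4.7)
The plan is to prove the two directions separately. The direction (i) $\Rightarrow$ (ii) is a direct consequence of the chain rule: if $u_\varepsilon = U_{\varepsilon, y_\varepsilon} + \varphi_{\varepsilon, y_\varepsilon}$ is a critical point of $I_\varepsilon$ in $H_\varepsilon$, then for any admissible path $(y(t), \varphi(t))$ in $M_{\varepsilon, \eta}$ passing through $(y_\varepsilon, \varphi_{\varepsilon, y_\varepsilon})$ at $t = 0$,
$$
\frac{d}{dt}\bigg|_{t=0} J_\varepsilon(y(t), \varphi(t)) = \Big\langle I_\varepsilon'(u_\varepsilon),\, \dot y(0)\cdot \nabla_y U_{\varepsilon, y_\varepsilon} + \dot\varphi(0)\Big\rangle = 0,
$$
so $(y_\varepsilon, \varphi_{\varepsilon, y_\varepsilon})$ is automatically a critical point of $J_\varepsilon$ on $M_{\varepsilon, \eta}$.

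For (ii) $\Rightarrow$ (i) I would proceed in two stages, mimicking the classical Lyapunov--Schmidt argument. First, the critical point condition in the $\varphi$ variable reads $\partial_\varphi J_\varepsilon(y, \varphi)[\psi] = 0$ for every $\psi \in E_{\varepsilon, y}$, which is equivalent to $\langle I_\varepsilon'(u_\varepsilon), \psi\rangle = 0$ for every $\psi \in E_{\varepsilon, y}$. Since $E_{\varepsilon, y}$ has codimension $N$ in $H_\varepsilon$, the Riesz representer of $I_\varepsilon'(u_\varepsilon)$ must belong to the span of $\partial_{y_i} U_{\varepsilon, y}$, so there exist Lagrange multipliers $\lambda_1, \ldots, \lambda_N \in \mathbb{R}$ with
$$
I_\varepsilon'(u_\varepsilon)[\cdot] = \sum_{i=1}^{N} \lambda_i \Big\langle \frac{\partial U_{\varepsilon, y}}{\partial y_i},\, \cdot \Big\rangle_\varepsilon.
$$

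Second, I would use the critical point condition $\nabla j_\varepsilon(y) = 0$ coming from \eqref{eq2.2} to force all the $\lambda_i$ to vanish. Differentiating $j_\varepsilon(y) = J_\varepsilon(y, \varphi_{\varepsilon, y})$ in $y_j$ and using the chain rule gives
$$
0 = \partial_{y_j} j_\varepsilon(y) = \Big\langle I_\varepsilon'(u_\varepsilon),\, \frac{\partial U_{\varepsilon, y}}{\partial y_j} + \frac{\partial \varphi_{\varepsilon, y}}{\partial y_j}\Big\rangle,
$$
where $\partial_{y_j}\varphi_{\varepsilon, y}$ is computed by implicitly differentiating the orthogonality constraints defining $E_{\varepsilon, y}$. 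Substituting the representation of $I_\varepsilon'(u_\varepsilon)$ produces a linear system $A(\varepsilon, y)\lambda = 0$ whose matrix has leading part
$$
A_{ij}^{(0)} = \Big\langle \frac{\partial U_{\varepsilon, y}}{\partial y_i},\, \frac{\partial U_{\varepsilon, y}}{\partial y_j}\Big\rangle_\varepsilon.
$$
After the rescaling $x = y + \varepsilon z$, this reduces to $\varepsilon^{N-2s}$ times a constant, positive-definite diagonal matrix, whose positivity is guaranteed by the radial symmetry and polynomial decay of $U$ from Proposition \ref{Pro1.1} and \eqref{eq2.1}. The remaining contributions involve $\|\varphi_{\varepsilon, y}\|_\varepsilon = o(\varepsilon^{N/2})$ and $\|\partial_{y_j}\varphi_{\varepsilon, y}\|_\varepsilon$, hence are of strictly lower order, and $A(\varepsilon, y)$ is invertible for $\varepsilon$ small. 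This forces $\lambda_i = 0$ for all $i$, so $I_\varepsilon'(u_\varepsilon) = 0$.

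The main technical obstacle will be to bound $\|\partial_{y_j}\varphi_{\varepsilon, y}\|_\varepsilon$ uniformly in $y \in B_\delta(x_0)$ and to verify that the subleading contributions to $A_{ij}$ genuinely beat the $\varepsilon^{N-2s}$ lower bound of the principal part. Both rely on the $C^1$-regularity of the map $y \mapsto \varphi_{\varepsilon, y}$ established in Step 1 and, ultimately, on the nondegeneracy of $U$ from Proposition \ref{Pro1.2}, which guarantees that the linearized operator used to construct $\varphi_{\varepsilon, y}$ is uniformly invertible on $E_{\varepsilon, y}$.
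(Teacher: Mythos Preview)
The paper does not supply a proof of this lemma; it is stated as part of the material recalled from the companion paper \cite{R-Yang}. Comparing your argument against the standard Lyapunov--Schmidt proof of such equivalences, your overall scheme (Lagrange multipliers plus invertibility of a Gram-type matrix) is the right one, but the second stage contains a logical slip. You invoke the reduced functional $j_\varepsilon$ from \eqref{eq2.2} and the map $y\mapsto\varphi_{\varepsilon,y}$ of Lemma~\ref{Lem2.5}; these are constructed \emph{after} Lemma~\ref{Lem2.1} in the paper's development, and more importantly they are not part of hypothesis~(ii). What you are given is that $(y,\varphi)$ is critical for $J_\varepsilon$ on the constrained set $M_{\varepsilon,\eta}$ --- there is no a~priori reason that $\varphi$ coincides with the special $\varphi_{\varepsilon,y}$ from the later reduction, and $\nabla j_\varepsilon(y)=0$ is simply not the information at hand.

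The repair is elementary and in fact removes the technical obstacle you flag. Criticality of $J_\varepsilon$ in the $y_j$-direction means that along any $C^1$ curve $t\mapsto(y(t),\varphi(t))$ in $M_{\varepsilon,\eta}$ through $(y,\varphi)$ with $\dot y(0)=e_j$ one has $\langle I_\varepsilon'(u_\varepsilon),\,\partial_{y_j}U_{\varepsilon,y}+\dot\varphi(0)\rangle=0$. Differentiating the fiber constraint $\langle\partial_{y_i}U_{\varepsilon,y(t)},\varphi(t)\rangle_\varepsilon=0$ at $t=0$ gives $\langle\partial_{y_i}U_{\varepsilon,y},\dot\varphi(0)\rangle_\varepsilon=-\langle\partial^2_{y_iy_j}U_{\varepsilon,y},\varphi\rangle_\varepsilon$, and substituting your Lagrange representation yields the linear system $A\lambda=0$ with
\[
A_{ij}=\Big\langle\frac{\partial U_{\varepsilon,y}}{\partial y_i},\frac{\partial U_{\varepsilon,y}}{\partial y_j}\Big\rangle_\varepsilon-\Big\langle\frac{\partial^2 U_{\varepsilon,y}}{\partial y_i\partial y_j},\varphi\Big\rangle_\varepsilon.
\]
The correction term is controlled directly by $\|\varphi\|_\varepsilon\le\sqrt{\eta}\,\varepsilon^{N/2}$ (which is all the membership $(y,\varphi)\in M_{\varepsilon,\eta}$ gives you) and is of strictly lower order than the diagonal principal part once $\eta$ is small. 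No bound on $\|\partial_{y_j}\varphi_{\varepsilon,y}\|_\varepsilon$, no $C^1$-regularity of the reduction map, and no appeal to Lemma~\ref{Lem2.5} is needed.
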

\par
Now, in order to realize {\bf Step 1}, we expand $J_{\varepsilon}(y, \cdot)$ near $\varphi=0$ for each fixed $y$ as follows:
\begin{equation*}
J_{\varepsilon}(y, \varphi)=J_{\varepsilon}(y, 0)+l_{\varepsilon}(\varphi)+\frac{1}{2}\left\langle\mathcal{L}_{\varepsilon} \varphi, \varphi\right\rangle+R_{\varepsilon}(\varphi)
\end{equation*}
where $J_{\varepsilon}(y, 0)=I_{\varepsilon}\left(U_{\varepsilon, y}\right)$, and $l_{\varepsilon}, \mathcal{L}_{\varepsilon}$ and $R_{\varepsilon}$ are defined for $\varphi, \psi \in H_{\varepsilon}$ as follows:
\begin{equation}\label{eq2.3}
\begin{aligned}
l_{\varepsilon}(\varphi) &=\left\langle I_{\varepsilon}^{\prime}\left(U_{\varepsilon, y}\right), \varphi\right\rangle \\
&=\left\langle U_{\varepsilon, y}, \varphi\right\rangle_{\varepsilon}+b\varepsilon^{4s-N}\left(\int_{\mathbb{R}^{N}}\left| (-\Delta)^{\frac{s}{2}}U_{\varepsilon, y}\right|^{2}dx\right) \int_{\mathbb{R}^{N}} (-\Delta)^{\frac{s}{2}} U_{\varepsilon, y} \cdot (-\Delta)^{\frac{s}{2}} \varphi dx-\int_{\mathbb{R}^{N}} U_{\varepsilon, y}^{p} \varphi dx
\end{aligned}
\end{equation}
and $\mathcal{L}_{\varepsilon}: L^{2}\left(\mathbb{R}^{N}\right) \rightarrow L^{2}\left(\mathbb{R}^{N}\right)$ is the bilinear form around $U_{\varepsilon, y}$ defined by
\begin{equation*}
\begin{aligned}
\left\langle\mathcal{L}_{\varepsilon} \varphi, \psi\right\rangle &=\left\langle I_{\varepsilon}^{\prime \prime}\left(U_{\varepsilon, y}\right)[\varphi], \psi\right\rangle \\
&=\langle\varphi, \psi\rangle_{\varepsilon}+ b\varepsilon^{4s-N}\left(\int_{\mathbb{R}^{N}}\left|(-\Delta)^{\frac{s}{2}} U_{\varepsilon, y}\right|^{2}dx\right) \int_{\mathbb{R}^{N}} (-\Delta)^{\frac{s}{2}} \varphi \cdot (-\Delta)^{\frac{s}{2}} \psi dx\\
&+2\varepsilon^{4s-N}b\left(\int_{\mathbb{R}^{N}} (-\Delta)^{\frac{s}{2}} U_{\varepsilon, y} \cdot (-\Delta)^{\frac{s}{2}} \varphi dx\right)\left(\int_{\mathbb{R}^{N}} (-\Delta)^{\frac{s}{2}} U_{\varepsilon, y} \cdot (-\Delta)^{\frac{s}{2}} \psi dx\right)-p \int_{\mathbb{R}^{N}} U_{\varepsilon, y}^{p-1} \varphi \psi dx
\end{aligned}
\end{equation*}
and $R_{\varepsilon}$ denotes the second order reminder term given by
\begin{equation}\label{eq2.4}
R_{\varepsilon}(\varphi)=J_{\varepsilon}(y, \varphi)-J_{\varepsilon}(y, 0)-l_{\varepsilon}(\varphi)-\frac{1}{2}\left\langle\mathcal{L}_{\varepsilon} \varphi, \varphi\right\rangle.
\end{equation}
We remark that $R_{\varepsilon}$ belongs to $C^{2}\left(H_{\varepsilon}\right)$ since so is every term in the right hand side of \eqref{eq2.4}.
In the rest of this section, we consider $l_{\varepsilon}: H_{\varepsilon} \rightarrow \mathbb{R}$ and $R_{\varepsilon}: H_{\varepsilon} \rightarrow \mathbb{R}$ and give some elementary estimates.
\par
\begin{Lem}\label{Lem2.2}
Assume that $V$ satisfies $(V_1)$ and $(V_2)$. Then, there exists a constant $C>0$, independent of $\varepsilon$, such that for any $y \in B_{1}(0)$, there holds
\begin{equation*}
\left|l_{\varepsilon}(\varphi)\right| \leq C \varepsilon^{\frac{N}{2}}\left(\varepsilon^{\alpha}+(|V(y)-V(x_0)|)\right)\|\varphi\|_{\varepsilon}
\end{equation*}
for $\varphi \in H_{\varepsilon}$. Here $\alpha$ denotes the order of the H\"{o}lder continuity of $V$ in $B_{r_0}(x_0)$.
\end{Lem}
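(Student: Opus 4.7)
The plan is to show that $l_{\varepsilon}(\varphi)$ collapses to a single potential–correction integral, because $U_{\varepsilon,y}$ was constructed to be an exact solution of the Kirchhoff equation with the frozen potential $V(x_0)$, and then to estimate that integral by Hölder regularity plus the polynomial decay of $U$.

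First I would observe that $U(x)=m^{1/(p-1)}Q(m^{1/(2s)}\mathcal{E}_0^{-1/(2s)}x)$ with $m=V(x_0)$ satisfies $\mathcal{E}_0(-\Delta)^s U+V(x_0)U=U^p$, where $\mathcal{E}_0=a+b\|(-\Delta)^{s/2}U\|_2^2$. A direct scaling check, using $\|(-\Delta)^{s/2}U_{\varepsilon,y}\|_2^2=\varepsilon^{N-2s}\|(-\Delta)^{s/2}U\|_2^2$, gives
\begin{equation*}
\bigl(\varepsilon^{2s}a+b\varepsilon^{4s-N}\|(-\Delta)^{s/2}U_{\varepsilon,y}\|_2^2\bigr)(-\Delta)^s U_{\varepsilon,y}+V(x_0)U_{\varepsilon,y}=U_{\varepsilon,y}^p.
\end{equation*}
Inserting this identity into \eqref{eq2.3} and integrating by parts, the Kirchhoff coefficient and the nonlinear term cancel exactly, leaving the clean expression
\begin{equation*}
l_{\varepsilon}(\varphi)=\int_{\mathbb{R}^N}\bigl(V(x)-V(x_0)\bigr)\,U_{\varepsilon,y}\,\varphi\,dx.
\end{equation*}

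Next I would apply Cauchy–Schwarz, using that $\|\varphi\|_2\leq C\|\varphi\|_{\varepsilon}$ by $(V_1)$, so the lemma reduces to showing
\begin{equation*}
\Bigl(\int_{\mathbb{R}^N}(V(x)-V(x_0))^2 U_{\varepsilon,y}^2\,dx\Bigr)^{1/2}\leq C\varepsilon^{N/2}\bigl(\varepsilon^{\alpha}+|V(y)-V(x_0)|\bigr).
\end{equation*}
I would split $\mathbb{R}^N$ into the near region $|x-y|<R$ (with $R$ chosen so that $y+B_R\subset B_{r_0}(x_0)$, shrinking the ambient ball if necessary) and its complement. In the near region I write $V(x)-V(x_0)=[V(x)-V(y)]+[V(y)-V(x_0)]$ and use the Hölder continuity from $(V_2)$ to bound the first bracket by $C|x-y|^{\alpha}$. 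After the change of variable $x=y+\varepsilon z$, this contributes
\begin{equation*}
C\varepsilon^{N+2\alpha}\int_{\mathbb{R}^N}|z|^{2\alpha}U(z)^2\,dz,
\end{equation*}
while the second bracket contributes $|V(y)-V(x_0)|^2\int U_{\varepsilon,y}^2\,dx=C\varepsilon^N|V(y)-V(x_0)|^2$. In the far region I use the boundedness of $V$ together with the polynomial decay \eqref{eq2.1}, namely $U_{\varepsilon,y}(x)\leq C\varepsilon^{N+2s}/|x-y|^{N+2s}$, to get a remainder of order $\varepsilon^{2(N+2s)}$, which is absorbed by the previous terms.

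The only genuinely delicate point is the finiteness of $\int|z|^{2\alpha}U(z)^2\,dz$: since $U(z)\lesssim(1+|z|)^{-(N+2s)}$, the integrand behaves like $|z|^{2\alpha-2(N+2s)}$ at infinity, which is integrable precisely when $2\alpha-2(N+2s)<-N$, i.e.\ $\alpha<(N+4s)/2$. This is exactly the Hölder exponent restriction imposed in $(V_2)$, so this is where that hypothesis is consumed. Once this integrability is in hand, summing the three contributions and taking the square root produces the announced bound, and I do not anticipate further subtleties beyond keeping track of constants.
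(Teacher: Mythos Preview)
The paper does not actually give a proof of this lemma; Lemmas~\ref{Lem2.1}--\ref{Lem2.5} are merely stated as results recalled from the companion paper, so there is nothing to compare against directly. Your argument is correct and is the standard one: the scaling identity for $U_{\varepsilon,y}$ collapses $l_\varepsilon(\varphi)$ to the single term $\int_{\mathbb{R}^N}(V(x)-V(x_0))U_{\varepsilon,y}\varphi\,dx$, and the near/far splitting with H\"older continuity on $B_R(y)$ and polynomial decay of $U$ outside does the rest. Your remark that the integrability of $|z|^{2\alpha}U(z)^2$ is precisely what forces the restriction $\alpha<(N+4s)/2$ in $(V_2)$ is exactly right, and is the only place that upper bound on $\alpha$ enters. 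The paper does carry out an almost identical computation later, in the proof of Lemma~\ref{Lem4.2} under the stronger assumption $(V_3)$ (see \eqref{eq4.5}--\eqref{eq4.6}), with the same decomposition into a ball around $y_\varepsilon$ and its complement; your approach is in the same spirit.
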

\par
Next we give estimates for $R_{\varepsilon}$ and its derivatives $R_{\varepsilon}^{(i)}$ for $i=1,2$.
\begin{Lem}\label{Lem2.30}
There exists a constant $C>0$, independent of $\varepsilon$ and $b$, such that for $i \in\{0,1,2\}$, there hold
\begin{equation*}
\left\|R_{\varepsilon}^{(i)}(\varphi)\right\| \leq C \varepsilon^{-\frac{N(p-1)}{2}}\|\varphi\|_{\varepsilon}^{p+1-i}+C(b+1) \varepsilon^{-\frac{N}{2}}\left(1+\varepsilon^{-\frac{N}{2}}\|\varphi\|_{\varepsilon}\right)\|\varphi\|_{\varepsilon}^{N-i}
\end{equation*}
for all $\varphi \in H_{\varepsilon}$.
\end{Lem}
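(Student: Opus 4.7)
The plan is to exploit the defining formula \eqref{eq2.4}: because $\|u\|_\varepsilon^2$ is exactly quadratic in $u$, the $\|\cdot\|_\varepsilon^2$-part of $I_\varepsilon$ contributes nothing to the Taylor remainder $R_\varepsilon$. Hence $R_\varepsilon(\varphi)$ splits into a Kirchhoff piece coming from the quartic term $\big(\int|(-\Delta)^{s/2}(U_{\varepsilon,y}+\varphi)|^2dx\big)^2$ and a nonlinearity piece coming from $\int(U_{\varepsilon,y}+\varphi)^{p+1}dx$. I would estimate the two contributions separately and then sum them.

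For the Kirchhoff piece, expanding the quartic and subtracting the order zero, one and two parts in $\varphi$ isolates
\begin{equation*}
R^{\mathrm{Kir}}_\varepsilon(\varphi)=b\varepsilon^{4s-N}\Big[\Big(\int_{\R^N}(-\Delta)^{s/2}U_{\varepsilon,y}\cdot(-\Delta)^{s/2}\varphi\,dx\Big)\int_{\R^N}|(-\Delta)^{s/2}\varphi|^2dx+\tfrac{1}{4}\Big(\int_{\R^N}|(-\Delta)^{s/2}\varphi|^2dx\Big)^2\Big].
\end{equation*}
Cauchy--Schwarz, the scaling identity $\int_{\R^N}|(-\Delta)^{s/2}U_{\varepsilon,y}|^2dx=\varepsilon^{N-2s}\int_{\R^N}|(-\Delta)^{s/2}U|^2dx$, and the bound $\int_{\R^N}|(-\Delta)^{s/2}\varphi|^2dx\le a^{-1}\varepsilon^{-2s}\|\varphi\|_\varepsilon^2$ (immediate from $(V_1)$ and the definition of $\|\cdot\|_\varepsilon$) reduce everything to tracking $\varepsilon$-exponents and produce a bound of the form $Cb\,\varepsilon^{-N/2}(1+\varepsilon^{-N/2}\|\varphi\|_\varepsilon)\|\varphi\|_\varepsilon^{3-i}$, which is the second contribution in the claimed estimate.

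For the nonlinearity piece I would rely on the pointwise Taylor inequality
\begin{equation*}
\big|(U_{\varepsilon,y}+\varphi)^{p+1}-U_{\varepsilon,y}^{p+1}-(p+1)U_{\varepsilon,y}^p\varphi-\tfrac{p(p+1)}{2}U_{\varepsilon,y}^{p-1}\varphi^2\big|\le C\big(|\varphi|^{p+1}+U_{\varepsilon,y}^{(p-2)_{+}}|\varphi|^3\big),
\end{equation*}
together with its first- and second-order Gateaux-derivative analogues, where the second term on the right is omitted whenever $p\le 2$. The change of variables $x=\varepsilon\xi$, combined with the subcritical Sobolev embedding $H^s(\R^N)\hookrightarrow L^{p+1}(\R^N)$, yields the key rescaling identity $\int_{\R^N}|\varphi|^{p+1}dx\le C\varepsilon^{-N(p-1)/2}\|\varphi\|_\varepsilon^{p+1}$, which supplies the first term $C\varepsilon^{-N(p-1)/2}\|\varphi\|_\varepsilon^{p+1-i}$ in the claimed estimate.

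The cases $i=1,2$ are handled by exactly the same machinery applied to the Gateaux differentials $R'_\varepsilon(\varphi)[\psi_1]$ and $R''_\varepsilon(\varphi)[\psi_1,\psi_2]$, the dualities in the test directions being absorbed via Cauchy--Schwarz into $\|\psi_j\|_\varepsilon$ (which the dual norm $\|\cdot\|$ in $R_\varepsilon^{(i)}$ then suppresses). The main technical obstacle will be the cross term $U_{\varepsilon,y}^{p-2}|\varphi|^3$ that arises only for $p>2$: here I would apply H\"older with exponents $\tfrac{p+1}{p-2}$ and $\tfrac{p+1}{3}$, and use the polynomial decay of $U$ recalled in \eqref{eq2.1} (so that $U_{\varepsilon,y}^{p-2}\in L^{(p+1)/(p-2)}$ uniformly after rescaling) to absorb this cross term into either the $\|\varphi\|_\varepsilon^{p+1-i}$-branch or the $\|\varphi\|_\varepsilon^{3-i}$-branch, whichever carries the smaller $\varepsilon$-weight. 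Once this bookkeeping is finalised, both inequalities of the lemma follow by summation.
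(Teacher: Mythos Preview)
The paper does not actually prove this lemma: Section~2 opens by announcing that all the reduction lemmas are recalled from \cite{R-Yang}, and Lemma~\ref{Lem2.30} is stated there without argument. So there is no proof in the paper to compare against.

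Your approach is the standard one and is correct in substance. Splitting $R_\varepsilon$ into the Kirchhoff quartic remainder and the power-nonlinearity remainder is exactly right; the scaling identities you invoke (in particular $\int|(-\Delta)^{s/2}U_{\varepsilon,y}|^2=\varepsilon^{N-2s}\int|(-\Delta)^{s/2}U|^2$ and $\int|(-\Delta)^{s/2}\varphi|^2\le a^{-1}\varepsilon^{-2s}\|\varphi\|_\varepsilon^2$) are accurate, and the change of variables $x=\varepsilon\xi$ together with the $H^s\hookrightarrow L^{p+1}$ embedding does give the claimed bound $\int|\varphi|^{p+1}\le C\varepsilon^{-N(p-1)/2}\|\varphi\|_\varepsilon^{p+1}$. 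The treatment of the cubic cross term $U_{\varepsilon,y}^{p-2}|\varphi|^3$ for $p>2$ via H\"older with exponents $\tfrac{p+1}{p-2},\tfrac{p+1}{3}$ is also the usual device.

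One remark: your Kirchhoff estimate naturally produces the exponent $3-i$, not the $N-i$ appearing in the displayed inequality. The exponent $3-i$ is what a quartic functional must give after subtracting its second-order Taylor polynomial, and it is the exponent that appears in the local analogue (e.g.\ \cite{MR4021897}); the ``$N-i$'' in the statement is almost certainly a misprint for ``$3-i$''. Your computation is consistent with the intended result.
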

\par
Now we will give the energy expansion for the approximate solutions.
\begin{Lem}\label{Lem2.3}
Assume that $V$ satisfies $(V 1)$ and $(V 2)$. Then, for $\varepsilon>0$ sufficiently small, there is a small constant $\tau>0$ and $C>0$ such that,
\begin{equation*}
\begin{aligned}
I_{\varepsilon}\left(U_{\varepsilon, y}\right)=& A \varepsilon^{N}+B \varepsilon^{N}\left(\left(V\left(y\right)-V\left(x_0\right)\right)\right)+O(\varepsilon^{N+\alpha})
\end{aligned}
\end{equation*}
where
\begin{equation*}
A=\frac{1}{2} \int_{\mathbb{R}^{N}}\left(a|(-\Delta)^{\frac{s}{2}} U|^{2}+U^{2}\right)dx+\frac{b}{4}\left(\int_{\mathbb{R}^{N}}|(-\Delta)^{\frac{s}{2}} U|^{2}dx\right)^{2}-\frac{1}{p+1} \int_{\mathbb{R}^{N}} U^{p+1}dx,
\end{equation*}
and
\begin{equation*}
B=\frac{1}{2} \int_{\mathbb{R}^{N}} U^{2}dx.
\end{equation*}
\end{Lem}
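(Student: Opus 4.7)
The plan is to rescale so that the only $y$-dependent piece of $I_\varepsilon(U_{\varepsilon,y})$ sits in the potential term, and then to estimate that piece using the H\"older hypothesis $(V_2)$ together with the decay estimate \eqref{eq2.1}. Substituting $z=(x-y)/\varepsilon$ and using the identity $(-\Delta)^{s/2}U_{\varepsilon,y}(x)=\varepsilon^{-s}\bigl((-\Delta)^{s/2}U\bigr)((x-y)/\varepsilon)$, the four contributions to $I_\varepsilon(U_{\varepsilon,y})$ scale as follows: the kinetic part gives $\tfrac{a\varepsilon^N}{2}\int|(-\Delta)^{s/2}U|^2 dz$; the Kirchhoff part contributes $\tfrac{b\varepsilon^N}{4}\bigl(\int|(-\Delta)^{s/2}U|^2 dz\bigr)^2$, where the prefactor $\varepsilon^{4s-N}$ is designed exactly to cancel the $\varepsilon^{2N-4s}$ produced by squaring; the nonlinearity contributes $\tfrac{\varepsilon^N}{p+1}\int U^{p+1} dz$; and the potential term equals $\tfrac{\varepsilon^N}{2}\int V(\varepsilon z + y) U^2(z) dz$. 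After grouping the $y$-independent quantities into $A\varepsilon^N$, the lemma reduces to showing
\begin{equation*}
\frac{1}{2}\int_{\mathbb{R}^N}\bigl[V(\varepsilon z + y)-V(x_0)\bigr]U^2(z) dz = B\bigl(V(y)-V(x_0)\bigr) + O(\varepsilon^\alpha).
\end{equation*}

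I would prove this by writing $V(\varepsilon z + y)-V(x_0)=\bigl(V(y)-V(x_0)\bigr)+\bigl(V(\varepsilon z + y)-V(y)\bigr)$, so that the first piece integrates against $U^2$ to give exactly $2B(V(y)-V(x_0))$. For the residual $\int\bigl[V(\varepsilon z + y)-V(y)\bigr]U^2 dz$ I split at the scale $|\varepsilon z|\le r_0/2$: on the interior region both $y$ and $\varepsilon z + y$ lie in $\overline{B_{r_0}(x_0)}$ once $|y|$ is small enough, and hypothesis $(V_2)$ yields $|V(\varepsilon z + y)-V(y)|\le C|\varepsilon z|^\alpha$. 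The resulting contribution is bounded by $C\varepsilon^\alpha\int_{\mathbb{R}^N}|z|^\alpha U^2(z) dz$, and the polynomial decay in \eqref{eq2.1} together with the bound $\alpha<(N+4s)/2$ makes the latter integral finite. On the exterior region $|\varepsilon z|>r_0/2$, boundedness of $V$ gives a bound by $2\|V\|_\infty\int_{|z|>r_0/(2\varepsilon)}U^2 dz$, and the polynomial tail yields an estimate of order $\varepsilon^{N+4s}$, which is far smaller than $\varepsilon^\alpha$.

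Combining the interior and exterior estimates proves the reduced identity and hence the lemma. The computation is largely bookkeeping; the only point requiring real attention is matching the H\"older exponent $\alpha$ with the integrability of the moment $\int|z|^\alpha U^2 dz$, which is precisely why $(V_2)$ carries the quantitative restriction $\alpha<(N+4s)/2$. The nonlocal Kirchhoff term plays no role at this stage since it is $y$-independent after rescaling, and the only mild subtlety coming from the fractional operator is the use of the pointwise decay estimate \eqref{eq2.1} for both $U$ and $(-\Delta)^{s/2}U$, which has already been established.
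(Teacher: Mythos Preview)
The paper does not supply its own proof of this lemma; Section~2 merely records the statements and refers to the companion work \cite{R-Yang} for the details. Your argument---rescale so that only the potential term carries $y$-dependence, write $V(\varepsilon z+y)-V(x_0)=(V(y)-V(x_0))+(V(\varepsilon z+y)-V(y))$, and control the second piece by splitting into a near region (handled by the H\"older bound in $(V_2)$) and a far region (handled by the polynomial decay \eqref{eq2.1})---is correct and is precisely the standard computation one expects in this setting, so it almost certainly coincides with the omitted proof. One small remark: the finiteness of $\int_{\mathbb{R}^N}|z|^{\alpha}U^2\,dz$ actually only needs $\alpha<N+4s$, not the stronger hypothesis $\alpha<(N+4s)/2$; the latter restriction is used elsewhere (e.g.\ in Lemma~\ref{Lem2.2}, where a Cauchy--Schwarz step produces $\int|z|^{2\alpha}U^2\,dz$).
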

\par
Now we complete {\bf Step 1} for the Lyapunov-Schmidt reduction method as before. We first consider the operator $\mathcal{L}_{\varepsilon}$,
\begin{equation*}
\begin{aligned}
\left\langle\mathcal{L}_{\varepsilon} \varphi, \psi\right\rangle&=\langle\varphi, \psi\rangle_{\varepsilon}+\varepsilon^{4s-N} b\int_{\mathbb{R}^{N}}\left| (-\Delta)^{\frac{s}{2}} U_{\varepsilon, y}\right|^{2}dx \int_{\mathbb{R}^{N}} (-\Delta)^{\frac{s}{2}} \varphi \cdot (-\Delta)^{\frac{s}{2}} \psi dx\\
&\quad+2 \varepsilon^{4s-N} b\left(\int_{\mathbb{R}^{N}} (-\Delta)^{\frac{s}{2}} U_{\varepsilon, y} \cdot (-\Delta)^{\frac{s}{2}} \varphi dx \right)\left(\int_{\mathbb{R}^{N}} (-\Delta)^{\frac{s}{2}} U_{\varepsilon, y} \cdot (-\Delta)^{\frac{s}{2}} \psi dx\right)-p \int_{\mathbb{R}^{N}} U_{\varepsilon, y}^{p-1} \varphi \psi dx
\end{aligned}
\end{equation*}
for $\varphi, \psi \in H_{\varepsilon} .$ The following result shows that $\mathcal{L}_{\varepsilon}$ is invertible when restricted on $E_{\varepsilon, y}$

\begin{Lem}\label{Lem2.4}
There exist $\varepsilon_{1}>0, \delta_{1}>0$ and $\rho>0$ sufficiently small, such that for every $\varepsilon \in\left(0, \varepsilon_{1}\right), \delta \in\left(0, \delta_{1}\right)$, there holds
\begin{equation*}
\left\|\mathcal{L}_{\varepsilon} \varphi\right\|_{\varepsilon} \geq \rho\|\varphi\|_{\varepsilon}, \quad \forall \varphi \in E_{\varepsilon, y}
\end{equation*}
uniformly with respect to $y \in B_{\delta}(x_0)$.
\end{Lem}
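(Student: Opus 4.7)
\textbf{Proof plan for Lemma \ref{Lem2.4}.} I would argue by contradiction, following the standard Lyapunov--Schmidt pattern and invoking the nondegeneracy statement of Proposition \ref{Pro1.2}. Suppose the claim fails: there exist sequences $\varepsilon_n\to 0$, $y_n\in B_{\delta}(x_0)$ and $\varphi_n\in E_{\varepsilon_n,y_n}$ with, after rescaling the normalization,
\begin{equation*}
\|\varphi_n\|_{\varepsilon_n}^2=\varepsilon_n^N,\qquad \|\mathcal{L}_{\varepsilon_n}\varphi_n\|_{\varepsilon_n}=o(\varepsilon_n^{N/2}).
\end{equation*}
Up to a subsequence $y_n\to y_*$ with $|y_*-x_0|\le\delta_1$; choosing $\delta_1$ small and applying Proposition \ref{Pro1.2} with $m=V(y_*)$ is what will eventually drive the contradiction.

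Next I would rescale by $\tilde\varphi_n(z):=\varphi_n(\varepsilon_n z+y_n)$. A change of variables gives
\begin{equation*}
\|\varphi_n\|_{\varepsilon_n}^2=\varepsilon_n^N\int_{\mathbb{R}^N}\bigl(a|(-\Delta)^{s/2}\tilde\varphi_n|^2+V(\varepsilon_n z+y_n)|\tilde\varphi_n|^2\bigr)\,dz,
\end{equation*}
so $\tilde\varphi_n$ is bounded in $H^s(\mathbb{R}^N)$; extract $\tilde\varphi_n\rightharpoonup\tilde\varphi_0$ weakly in $H^s$ and strongly in $L^q_{\mathrm{loc}}$ for $q\in[2,2_s^*)$. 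To identify the equation satisfied by $\tilde\varphi_0$, test $\mathcal{L}_{\varepsilon_n}\varphi_n$ against $\psi_n(x):=\psi((x-y_n)/\varepsilon_n)$ for arbitrary $\psi\in C_c^\infty(\mathbb{R}^N)$: the prefactor $\varepsilon_n^{4s-N}b$ combines with the two Jacobians $\varepsilon_n^{N-2s}$ arising from the fractional-gradient integrals to produce precisely the coefficients $b\|(-\Delta)^{s/2}U\|_2^2$ and $2b$ that appear in the linearization of Proposition \ref{Pro1.2}. Hence
\begin{equation*}
o(1)=\varepsilon_n^{-N}\langle \mathcal{L}_{\varepsilon_n}\varphi_n,\psi_n\rangle \longrightarrow \langle L_+\tilde\varphi_0,\psi\rangle,
\end{equation*}
and density of test functions gives $L_+\tilde\varphi_0=0$.

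By Proposition \ref{Pro1.2}, $\tilde\varphi_0=\sum_{i=1}^N c_i\,\partial_{z_i}U$. To eliminate the coefficients I would pass to the limit in the orthogonality conditions defining $E_{\varepsilon_n,y_n}$. Since $\partial_{y^i}U_{\varepsilon_n,y_n}(x)=-\varepsilon_n^{-1}(\partial_{z_i}U)((x-y_n)/\varepsilon_n)$, the same rescaling turns $\langle \partial_{y^i}U_{\varepsilon_n,y_n},\varphi_n\rangle_{\varepsilon_n}=0$ into
\begin{equation*}
\int_{\mathbb{R}^N}\bigl(a(-\Delta)^{s/2}\partial_{z_i}U\,(-\Delta)^{s/2}\tilde\varphi_n+V(\varepsilon_n z+y_n)\partial_{z_i}U\,\tilde\varphi_n\bigr)\,dz=0,
\end{equation*}
and in the limit this yields the orthogonality of $\tilde\varphi_0$ to each $\partial_{z_i}U$ in the limiting inner product. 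Combined with the span statement this forces $\tilde\varphi_0\equiv 0$.

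To close, test $\mathcal{L}_{\varepsilon_n}\varphi_n$ against $\varphi_n$ and rescale:
\begin{equation*}
o(1)=\!\int\!\bigl(a|(-\Delta)^{s/2}\tilde\varphi_n|^2+V(\varepsilon_n z+y_n)\tilde\varphi_n^2\bigr)+b\|(-\Delta)^{s/2}U\|_2^2\!\!\int\!|(-\Delta)^{s/2}\tilde\varphi_n|^2+2b\!\left(\int\!(-\Delta)^{s/2}U(-\Delta)^{s/2}\tilde\varphi_n\right)^{\!2}\!-p\!\int\! U^{p-1}\tilde\varphi_n^2.
\end{equation*}
The first three summands are nonnegative and the first equals $1$ by normalization, while the last tends to $p\int U^{p-1}\tilde\varphi_0^2=0$ by the compactness of the weight $U^{p-1}$ granted by the polynomial decay \eqref{eq2.1}. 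This yields $o(1)\ge 1-o(1)$, a contradiction. The main obstacle I expect is the quadratic rank-one term $2b(\int(-\Delta)^{s/2}U(-\Delta)^{s/2}\tilde\varphi_n)^2$: being quadratic in a weakly continuous linear functional it is only weakly lower semicontinuous, but its nonnegativity is precisely what makes the final contradiction close; tracking the many powers of $\varepsilon_n$ through the rescaling of this cross-term (and of the orthogonality condition) is the most delicate bookkeeping in the argument.
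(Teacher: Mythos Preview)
The paper does not actually supply a proof of Lemma~\ref{Lem2.4}: Section~2 explicitly states that ``all the results in the following has been obtained in \cite{R-Yang}'' and then merely records the lemma before moving on. Your contradiction-and-rescaling argument is precisely the standard one that one expects in this setting (and that is presumably carried out in the companion paper): assume failure along sequences $\varepsilon_n\to0$, $y_n\to x_0$, rescale to a bounded sequence in $H^s$, identify the weak limit as an element of $\ker L_+$ via Proposition~\ref{Pro1.2}, kill it with the orthogonality conditions, and then derive $1\le o(1)$ by testing against $\varphi_n$ and using the decay \eqref{eq2.1} to make the negative term $p\int U^{p-1}\tilde\varphi_n^2$ vanish. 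The bookkeeping of the $\varepsilon_n$-powers that you flag is indeed the only delicate point, and your observation that every term in $\langle\mathcal{L}_{\varepsilon_n}\varphi_n,\psi_n\rangle$ scales exactly like $\varepsilon_n^N$ is correct.

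One small clarification: in the negation of the statement you may (and should) take $\delta_n\to0$ as well, so that $y_n\to x_0$ and the limiting potential value is $V(x_0)$ rather than $V(y_*)$; this is what makes the limit operator coincide with the $L_+$ of Proposition~\ref{Pro1.2} (recall $U$ is built with $m=V(x_0)$). Your phrasing ``choosing $\delta_1$ small'' suggests you saw this, but it is cleaner to absorb it into the contradiction hypothesis from the start. With that adjustment the argument is complete.
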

\par
Lemma \ref{Lem2.4} implies that by restricting on $E_{\varepsilon, y}$, the quadratic form $\mathcal{L}_{\varepsilon}: E_{\varepsilon, y} \rightarrow E_{\varepsilon, y}$ has a bounded inverse, with $\left\|\mathcal{L}_{\varepsilon}^{-1}\right\| \leq \rho^{-1}$ uniformly with respect to $y \in B_{\delta}(x_0)$. This further implies the following reduction map.

\begin{Lem}\label{Lem2.5}
There exist $\varepsilon_{0}>0, \delta_{0}>0$ sufficiently small such that for all $\varepsilon \in\left(0, \varepsilon_{0}\right), \delta \in$ $\left(0, \delta_{0}\right)$, there exists a $C^{1}$ map $\varphi_{\varepsilon}: B_{\delta}(x_0) \rightarrow H_{\varepsilon}$ with $y \mapsto \varphi_{\varepsilon, y} \in E_{\varepsilon, y}$ satisfying
\begin{equation*}
\left\langle\frac{\partial J_{\varepsilon}\left(y, \varphi_{\varepsilon, y}\right)}{\partial \varphi}, \psi\right\rangle_{\varepsilon}=0, \quad \forall \psi \in E_{\varepsilon, y}.
\end{equation*}
Moreover, there exists a constant $C>0$ independent of $\varepsilon$ small enough and $\kappa\in(0,\frac{\alpha}{2})$ such that
\begin{equation*}
\|\varphi_{\varepsilon, y}\|_{\varepsilon} \leq C \varepsilon^{\frac{N}{2}+\alpha-\kappa}+C \varepsilon^{\frac{N}{2}} \left(V\left(y\right)-V\left(x_{0}\right)\right)^{1-\kappa}.
\end{equation*}
\end{Lem}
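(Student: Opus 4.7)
The plan is to reduce the critical-point condition $\partial_\varphi J_\varepsilon(y,\varphi)\!\mid_{E_{\varepsilon,y}}\!=0$ to a fixed-point problem and solve it by Banach contraction. From the expansion of $J_\varepsilon(y,\cdot)$ given before Lemma \ref{Lem2.2}, this condition reads, after identifying $l_\varepsilon$ with its Riesz representative,
\begin{equation*}
P_{\varepsilon,y}\bigl(l_\varepsilon+\mathcal{L}_\varepsilon\varphi+R'_\varepsilon(\varphi)\bigr)=0,
\end{equation*}
where $P_{\varepsilon,y}$ denotes the $\langle\cdot,\cdot\rangle_\varepsilon$-orthogonal projection onto $E_{\varepsilon,y}$. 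Lemma \ref{Lem2.4} states that $P_{\varepsilon,y}\mathcal{L}_\varepsilon$ is invertible on $E_{\varepsilon,y}$ with inverse of norm at most $\rho^{-1}$, uniformly in $y\in B_\delta(x_0)$, so the problem is equivalent to finding a fixed point of
\begin{equation*}
\mathcal{T}_{\varepsilon,y}(\varphi):=-\bigl(P_{\varepsilon,y}\mathcal{L}_\varepsilon\bigr)^{-1}P_{\varepsilon,y}\bigl(l_\varepsilon+R'_\varepsilon(\varphi)\bigr).
\end{equation*}

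Fix $\kappa\in(0,\alpha/2)$ and set
\begin{equation*}
r_{\varepsilon,y}:=\varepsilon^{\frac{N}{2}+\alpha-\kappa}+\varepsilon^{\frac{N}{2}}\bigl(V(y)-V(x_0)\bigr)^{1-\kappa},\qquad B_{\varepsilon,y}:=\{\varphi\in E_{\varepsilon,y}:\|\varphi\|_\varepsilon\le r_{\varepsilon,y}\}.
\end{equation*}
I would first verify $\mathcal{T}_{\varepsilon,y}(B_{\varepsilon,y})\subset B_{\varepsilon,y}$. For the linear part, Lemma \ref{Lem2.2} gives $\|l_\varepsilon\|_\varepsilon\le C\varepsilon^{N/2}(\varepsilon^\alpha+|V(y)-V(x_0)|)$, and since both $\varepsilon^\kappa$ and $(V(y)-V(x_0))^\kappa$ are $o(1)$ for $\varepsilon,\delta$ small, this quantity is bounded by $\tfrac{\rho}{4}r_{\varepsilon,y}$. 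For the nonlinear part, Lemma \ref{Lem2.30} with $i=1$ applied to $\varphi\in B_{\varepsilon,y}$ yields
\begin{equation*}
\|R'_\varepsilon(\varphi)\|_\varepsilon\le C\varepsilon^{-\frac{N(p-1)}{2}}\|\varphi\|_\varepsilon^{p}+C(b+1)\varepsilon^{-\frac{N}{2}}\bigl(1+\varepsilon^{-\frac{N}{2}}\|\varphi\|_\varepsilon\bigr)\|\varphi\|_\varepsilon^{N-1},
\end{equation*}
which, plugging in $\|\varphi\|_\varepsilon\le r_{\varepsilon,y}$ and using $p>1$ and $N\geq 2$, is $o(r_{\varepsilon,y})$ as $\varepsilon\to 0$. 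Together these give $\|\mathcal{T}_{\varepsilon,y}(\varphi)\|_\varepsilon\le r_{\varepsilon,y}$. The contraction property follows analogously from Lemma \ref{Lem2.30} with $i=2$: along the segment joining $\varphi_1,\varphi_2\in B_{\varepsilon,y}$ the operator norm of $R''_\varepsilon$ is $o(1)$, so that $\|\mathcal{T}_{\varepsilon,y}(\varphi_1)-\mathcal{T}_{\varepsilon,y}(\varphi_2)\|_\varepsilon\le \tfrac12\|\varphi_1-\varphi_2\|_\varepsilon$. Banach's theorem then produces a unique $\varphi_{\varepsilon,y}\in B_{\varepsilon,y}$, which by construction satisfies the asserted norm estimate.

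The $C^1$ regularity $y\mapsto\varphi_{\varepsilon,y}$ I would get from the implicit function theorem applied to
\begin{equation*}
F(y,\varphi):=\varphi+\bigl(P_{\varepsilon,y}\mathcal{L}_\varepsilon\bigr)^{-1}P_{\varepsilon,y}\bigl(l_\varepsilon+R'_\varepsilon(\varphi)\bigr),
\end{equation*}
noting that $D_\varphi F(y,\varphi_{\varepsilon,y})=\mathrm{Id}+(P_{\varepsilon,y}\mathcal{L}_\varepsilon)^{-1}P_{\varepsilon,y}R''_\varepsilon(\varphi_{\varepsilon,y})$ is invertible by a Neumann series argument since $\|R''_\varepsilon(\varphi_{\varepsilon,y})\|=o(1)$, and that the projector $P_{\varepsilon,y}$ depends $C^1$ on $y$ through the smooth family of bases $\partial U_{\varepsilon,y}/\partial y_i$. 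The main obstacle I anticipate is the bookkeeping of $\varepsilon$-powers coming from the double nonlocal term $\varepsilon^{4s-N}b\bigl(\int|(-\Delta)^{s/2}U_{\varepsilon,y}|^2\bigr)(-\Delta)^s$ inside $\mathcal{L}_\varepsilon$ and $R'_\varepsilon$: the Kirchhoff coupling forces one to keep this contribution within the coercivity regime of Lemma \ref{Lem2.4}, and the mild $\kappa$-loss in the final estimate is precisely the price paid for absorbing the cross-terms in $R'_\varepsilon$ into the linear bound — one cannot hope for $\|l_\varepsilon\|_\varepsilon\cdot\rho^{-1}$ directly because the nonlinear contribution is not quite a higher power of $r_{\varepsilon,y}$ unless one weakens the exponent slightly.
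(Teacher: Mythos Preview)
The paper does not actually prove this lemma: Section~2 explicitly says that ``all the results in the following has been obtained in \cite{R-Yang}'' and only restates the lemmas for convenience. Your contraction-mapping scheme is precisely the standard argument one expects in that reference: rewrite the projected criticality condition as a fixed point for $\mathcal{T}_{\varepsilon,y}$, control the linear piece via Lemma~\ref{Lem2.2}, control the nonlinear remainder via Lemma~\ref{Lem2.30}, and close with the uniform invertibility of $\mathcal{L}_\varepsilon$ on $E_{\varepsilon,y}$ from Lemma~\ref{Lem2.4}. Your explanation of the $\kappa$-loss --- that one weakens the exponent so that $\|l_\varepsilon\|$ becomes $o(r_{\varepsilon,y})$ rather than merely $O(r_{\varepsilon,y})$, leaving room to absorb the remainder --- is also the right one.

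One small caution on the bookkeeping. Reading the second term of Lemma~\ref{Lem2.30} literally with exponent $\|\varphi\|_\varepsilon^{N-i}$, your appeal to ``$N\ge 2$'' is not enough: for $N=2$ and $i=1$ that term is $\varepsilon^{-1}\|\varphi\|_\varepsilon$, which is of order $\varepsilon^{-1}r_{\varepsilon,y}$ and certainly not $o(r_{\varepsilon,y})$; and $N=1$ is in any case permitted by the hypothesis $2s<N<4s$. The intended exponent there is $3-i$ (the Kirchhoff energy is quartic in $u$, so the super-quadratic remainder in $\varphi$ is cubic plus quartic; compare the analogous estimate in \cite{MR4021897}). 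With $3-i$ in place your bound becomes $\varepsilon^{-N/2}\|\varphi\|_\varepsilon^{2}=\bigl(\varepsilon^{-N/2}r_{\varepsilon,y}\bigr)r_{\varepsilon,y}=o(r_{\varepsilon,y})$, and the self-map and contraction estimates go through for every admissible $N$. Apart from this typo-induced wrinkle, your argument is correct and matches the approach the paper is invoking.
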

{\bf Proof of Theorem \ref{Thm2.1}:}
Let $\varepsilon_{0}$ and $\delta_{0}$ be defined as in Lemma \ref{Lem2.5} and let $\varepsilon<\varepsilon_{0}$. Fix $0<$ $\delta<\delta_{0}$. Let $y \mapsto \varphi_{\varepsilon, y}$ for $y \in B_{\delta}(x_0)$ be the map obtained in Lemma \ref{Lem2.5}. As aforementioned in {\bf Step 2}, it is equivalent to find a critical point for the function $j_{\varepsilon}$ defined as in \eqref{eq2.2} by Lemma \ref{Lem2.1}. By the Taylor expansion, we have
\begin{equation*}
j_{\varepsilon}(y)=J\left(y, \varphi_{\varepsilon, y}\right)=I_{\varepsilon}\left(U_{\varepsilon, y}\right)+l_{\varepsilon}\left(\varphi_{\varepsilon, y}\right)+\frac{1}{2}\left\langle\mathcal{L}_{\varepsilon} \varphi_{\varepsilon, y}, \varphi_{\varepsilon, y}\right\rangle+R_{\varepsilon}\left(\varphi_{\varepsilon, y}\right).
\end{equation*}
We analyze the asymptotic behavior of $j_{\varepsilon}$ with respect to $\varepsilon$ first.
\par
By Lemma \ref{Lem2.2}-\ref{Lem2.4}, we have
\begin{equation}\label{eq2.5}
\begin{aligned}
j_{\varepsilon}(y)&=I_{\varepsilon}\left(U_{\varepsilon, y}\right)+O\left(\left\|l_{\varepsilon}\right\|\left\|\varphi_{\varepsilon}\right\|+\left\|\varphi_{\varepsilon}\right\|^{2}\right)\\
&=A \varepsilon^{N}+B \varepsilon^{N}\left( V\left(y\right)-V\left(x_{0}\right)\right)+ O(\varepsilon^{N})\left( \varepsilon^{\alpha-\kappa}+ \left(V\left(y\right)-V\left(x_{0}\right)\right)^{1-\kappa}\right)^2+O(\varepsilon^{N+\alpha}).\\
\end{aligned}
\end{equation}
\par
Now consider the minimizing problem
\begin{equation*}
j_{\varepsilon}\left(y_{\varepsilon}\right) \equiv \inf _{y \in B_{\delta}(x_0)} j_{\varepsilon}(y).
\end{equation*}
Assume that $j_{\varepsilon}$ is achieved by some $y_{\varepsilon}$ in $B_{\delta}(x_0) .$ We will prove that $y_{\varepsilon}$ is an interior point of $B_{\delta}(x_0)$.
\par
To prove the claim, we apply a comparison argument. Let $e \in \mathbb{R}^{N}$ with $|e|=1$ and $\eta>1$. We will choose $\eta$ later. Let $z_{\varepsilon}=\varepsilon^{\eta} e \in B_{\delta}(0)$ for a sufficiently large $\eta>1$. By the above asymptotics formula, we have
\begin{equation*}
\begin{aligned}
j_{\varepsilon}\left(z_{\varepsilon}\right)=& A \varepsilon^{N}+B \varepsilon^{N}\left(V\left(z_{\varepsilon}\right)-V(0)\right)+O\left(\varepsilon^{N+\alpha}\right) \\
&+O\left(\varepsilon^{N}\right)\left(\varepsilon^{\alpha-\kappa}+\left(V\left(z_{\varepsilon}\right)-V(0)\right)^{1-\kappa}\right)^{2}.
\end{aligned}
\end{equation*}
Applying the H\"{o}lder continuity of $V$, we derive that
\begin{equation*}
\begin{aligned}
j_{\varepsilon}\left(z_{\varepsilon}\right)=& A \varepsilon^{N}+O\left(\varepsilon^{N+\alpha \eta}\right)+O\left(\varepsilon^{N+\alpha}\right) \\
&+O\left(\varepsilon^{N}\left(\varepsilon^{2(\alpha-\tau)}+\varepsilon^{2 \eta \alpha(1-\kappa)}\right)\right) \\
=& A \varepsilon^{N}+O\left(\varepsilon^{N+\alpha}\right).
\end{aligned}
\end{equation*}
where $\eta>1$ is chosen to be sufficiently large accordingly. Note that we also used the fact that $\kappa\ll \alpha / 2$. Thus, by using $j\left(y_{\varepsilon}\right) \leq j\left(z_{\varepsilon}\right)$ we deduce
\begin{equation*}
B \varepsilon^{N}\left(V\left(y_{\varepsilon}\right)-V(0)\right)+O\left(\varepsilon^{N}\right)\left(\varepsilon^{\alpha-\kappa}+\left(V\left(y_{\varepsilon}\right)-V(0)\right)^{1-\kappa}\right)^{2} \leq O\left(\varepsilon^{N+\alpha}\right)
\end{equation*}
That is,
\begin{equation}\label{eq2.6}
B\left(V\left(y_{\varepsilon}\right)-V(0)\right)+O(1)\left(\varepsilon^{\alpha-\kappa}+\left(V\left(y_{\varepsilon}\right)-V(0)\right)^{1-\kappa}\right)^{2} \leq O\left(\varepsilon^{\alpha}\right)
\end{equation}
If $y_{\varepsilon} \in \partial B_{\delta}(0)$, then by the assumption $(V_2)$, we have
\begin{equation*}
V\left(y_{\varepsilon}\right)-V(0) \geq c_{0}>0
\end{equation*}
for some constant $0<c_{0} \ll 1$ since $V$ is continuous at $x=0$ and $\delta$ is sufficiently small. Thus, by noting that $B>0$ from Lemma \ref{Lem2.3} and sending $\varepsilon \rightarrow 0$, we infer from \eqref{eq2.6} that
\begin{equation*}
c_{0} \leq 0.
\end{equation*}
We reach a contradiction. This proves the claim. Thus $y_{\varepsilon}$ is a critical point of $j_{\varepsilon}$ in $B_{\delta}(x_0)$. Then the existence of solutions now follows from the claim and Lemma \ref{Lem2.1}.

\section{Concentration behavior of solutions}

First we explore some properties of the solutions derived as in Section 2.
Set
\begin{equation*}
\bar{u}_{\varepsilon}(x)=u_{\varepsilon}\left(x+y_{\varepsilon}\right).
\end{equation*}
Then $\bar{u}_{\varepsilon}>0$ solves
\begin{equation}\label{eq3.2}
\left(a+b \int_{\mathbb{R}^{N}}\left|(-\Delta)^{\frac{s}{2}} \bar{u}_{\varepsilon}\right|^{2}dx\right)(-\Delta )^s \bar{u}_{\varepsilon}+\bar{V}_{\varepsilon}(x) \bar{u}_{\varepsilon}=\bar{u}_{\varepsilon}^{p} \quad \text { in } \mathbb{R}^{N}
\end{equation}
with $\bar{V}_{\varepsilon}(x)=V\left(\varepsilon x+\varepsilon y_{\varepsilon}\right)$.
Then in this section we first show that the solutions concentrate around the minima of V.
\subsection{$L^\infty$-estimate}
In order to study the concentration behavior of the semiclassical solutions obtained above, we first establish the $L^\infty$-estimate. Now we recall the following result for completeness.
\begin{Lem}\label{Lem3.1}
Suppose that $h:\mathbb{R}\to\mathbb{R}$ is convex and Lipschitz continuous with the Lipschitz constant $L$, $h(0)=0$. Then for each $u\in H^s(\mathbb{R}^N)$, $h(u)\in H^s(\mathbb{R}^N)$ and
\begin{equation}\label{eq3.4}
(-\Delta)^sh(u)\leq h^\prime(u)(-\Delta)^su
\end{equation}
in the weak sense.
\end{Lem}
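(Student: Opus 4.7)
The plan is to prove the two assertions separately: first the membership $h(u)\in H^s(\mathbb{R}^N)$, then the pointwise Kato-type inequality for smooth data, and finally extend it to the weak sense by approximation.

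For the membership, I would use the hypotheses $h(0)=0$ and $|h(t_1)-h(t_2)|\leq L|t_1-t_2|$ directly. Taking $t_2=0$ gives $|h(u(x))|\leq L|u(x)|$, so $\|h(u)\|_{L^2}\leq L\|u\|_{L^2}$. For the Gagliardo seminorm,
$$\iint_{\mathbb{R}^N\times\mathbb{R}^N}\frac{|h(u(x))-h(u(y))|^2}{|x-y|^{N+2s}}\,dx\,dy\leq L^2 \iint_{\mathbb{R}^N\times\mathbb{R}^N}\frac{|u(x)-u(y)|^2}{|x-y|^{N+2s}}\,dx\,dy,$$
and combining the two bounds gives $\|h(u)\|_{H^s}\leq L\|u\|_{H^s}$.

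For the inequality at the pointwise level, the key input is the convexity of $h$: at any point $a$ where $h'(a)$ exists (which, since $h$ is convex and Lipschitz, is almost every $a$), one has the supporting-line inequality $h(b)\geq h(a)+h'(a)(b-a)$ for every $b\in\mathbb{R}$; at non-differentiable points we simply replace $h'(a)$ by a subgradient. Rewriting this as $h(a)-h(b)\leq h'(a)(a-b)$ and specialising to $a=u(x)$, $b=u(y)$, we divide by the positive kernel $|x-y|^{N+2s}$ to obtain
$$\frac{h(u(x))-h(u(y))}{|x-y|^{N+2s}}\leq h'(u(x))\,\frac{u(x)-u(y)}{|x-y|^{N+2s}}.$$
For a Schwartz function $u$, integrating in $y$ as a principal value and multiplying by $C(N,s)$ yields the pointwise inequality $(-\Delta)^sh(u)(x)\leq h'(u(x))(-\Delta)^su(x)$.

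To pass to general $u\in H^s(\mathbb{R}^N)$ in the weak sense, I would regularise: take $u_n=u*\rho_n$ and, if needed, replace $h$ by its Moreau–Yosida (or convolution) smoothings $h_n$, which remain convex, Lipschitz with constant $L$, and satisfy $h_n\to h$, $h'_n\to h'$ a.e. Apply the pointwise inequality to $(h_n,u_n)$, multiply by an arbitrary non-negative $\varphi\in C_c^\infty(\mathbb{R}^N)$, and move $(-\Delta)^s$ onto $\varphi$ through the symmetric bilinear form
$$\int_{\mathbb{R}^N} h_n(u_n)(-\Delta)^s\varphi\,dx=\frac{C(N,s)}{2}\iint_{\mathbb{R}^N\times\mathbb{R}^N}\frac{(h_n(u_n(x))-h_n(u_n(y)))(\varphi(x)-\varphi(y))}{|x-y|^{N+2s}}\,dx\,dy.$$
Then I let $n\to\infty$: the left-hand side converges to $\int h(u)(-\Delta)^s\varphi\,dx$ by dominated convergence (using $|h_n(u_n)|\leq L|u_n|$ and $u_n\to u$ in $H^s$), while the right-hand side converges to $\int h'(u)\varphi(-\Delta)^s u\,dx$ (using $|h'_n|\leq L$ and a.e.\ convergence of $h'_n(u_n)$ along a subsequence). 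This delivers the inequality $(-\Delta)^sh(u)\leq h'(u)(-\Delta)^su$ tested against arbitrary non-negative $\varphi\in C_c^\infty$, which is the desired weak sense.

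The main delicate point is the last limit: even though $h$ may fail to be differentiable at a countable set of points, the Lipschitz bound on $h'_n$ forces equi-integrability, so the passage through dominated convergence is routine provided one selects an a.e.\ convergent subsequence of $h'_n(u_n)$. Everything else is straightforward book-keeping with the singular integral representation of $(-\Delta)^s$ supplied at the start of the paper.
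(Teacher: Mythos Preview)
Your membership argument is identical to the paper's. For the inequality, however, the paper takes a much shorter path: it works directly with an arbitrary $u\in H^s(\mathbb{R}^N)$ and a nonnegative test function $\psi\in C_0^\infty$, using the \emph{second-order} symmetric representation
\[
\int_{\mathbb{R}^N}(-\Delta)^s h(u)\,\psi\,dx=-\tfrac{1}{2}C(N,s)\iint\frac{h(u(x+y))+h(u(x-y))-2h(u(x))}{|y|^{N+2s}}\,\psi(x)\,dy\,dx.
\]
Convexity gives $h(u(x\pm y))-h(u(x))\ge h'(u(x))\bigl(u(x\pm y)-u(x)\bigr)$ at a.e.\ $x$, so the integrand is bounded above by $h'(u(x))$ times the second-order difference of $u$, and the weak inequality follows in one line with no regularisation of either $u$ or $h$.

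Your route via the first-order difference, a pointwise bound for Schwartz $u$, and then a double approximation $(h_n,u_n)$ is heavier and, as written, leaves a genuine soft spot: the limit on the right-hand side. You justify $\int h'_n(u_n)\varphi\,(-\Delta)^s u_n\,dx\to\int h'(u)\varphi\,(-\Delta)^s u\,dx$ by ``$|h'_n|\le L$ and a.e.\ convergence'', but $(-\Delta)^s u_n\to(-\Delta)^s u$ only in $H^{-s}$, not pointwise or in $L^1_{\mathrm{loc}}$, so dominated convergence does not apply directly. One can repair this (e.g.\ by rewriting the right side via the bilinear Gagliardo form and showing $h'_n(u_n)\varphi\to h'(u)\varphi$ strongly in $H^s$, which itself needs an argument), but the paper's direct second-order computation sidesteps the issue entirely. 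The payoff of the paper's approach is brevity and the absence of any limiting procedure; your approach buys nothing extra here, so it is worth adopting the simpler one.
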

\begin{proof}
 First, we claim that $h(u)\in H^s(\mathbb{R}^N)$ for $u\in H^s(\mathbb{R}^N)$. In fact
\begin{equation*}
\begin{split}
[h(u)]_{\mathcal{D}^{s,2}}&=\big(\iint_{\mathbb{R}^N\times \mathbb{R}^N}\frac{|h(u(x))-h(u(y))|^2}{|x-y|^{N+2s}}dxdy\big)^{\frac{1}{2}}\\
&\leq \big(\iint_{\mathbb{R}^N\times \mathbb{R}^N}\frac{L^2|u(x)-u(y)|^2}{|x-y|^{N+2s}}dxdy\big)^{\frac{1}{2}}\\
&=L [u]_{\mathcal{D}^{s,2}},
\end{split}
\end{equation*}
which implies that $h(u)\in \mathcal{D}^{s,2}(\mathbb{R}^N)$. Moreover,
\begin{equation*}
  \int_{\mathbb{R}^N}|h(u)|^2dx=\int_{\mathbb{R}^N}|h(u)-h(0)|^2dx\leq \int_{\mathbb{R}^N}L^2|u|^2dx<\infty,
\end{equation*}
which yields that $h(u)\in L^2(\mathbb{R}^N)$. Therefore, the claim is true.
\par
Next we show that \eqref{eq3.4} holds. Observe that $h^\prime$ exists a.e. in $\mathbb{R}$ since $h$ is Lipschitz continuous. For $\psi\in C_{0}^{
\infty}(\mathbb{R}^N,\mathbb{R})$ with $\psi\geq0$, combining with the convexity of $h$, there holds
\begin{equation*}
\begin{split}
&~~~\int_{\mathbb{R}^N}(-\Delta)^s (h(u))\psi dx\\
&=-\frac{1}{2}C(s)\int\int_{\mathbb{R}^N\times \mathbb{R}^N}\frac{h(u(x+y))+h(u(x-y))-2h(u(x))}{|y|^{N+2s}}\psi(x) dydx,\\
&=-\frac{1}{2}C(s)\int\int_{\mathbb{R}^N\times \mathbb{R}^N}\frac{h(u(x+y))-h(u(x))+h(u(x-y))-h(u(x))}{|y|^{N+2s}}\psi(x) dydx\\
&\leq-\frac{1}{2}C(s)\int\int_{\mathbb{R}^N\times \mathbb{R}^N}\frac{h^\prime(u(x))(u(x+y)-u(x))+h^\prime(u(x))(u(x-y)-u(x))}{|y|^{N+2s}}\psi(x) dydx\\
&=-\frac{1}{2}C(s)\int\int_{\mathbb{R}^N\times \mathbb{R}^N}\frac{h^\prime(u(x))[u(x+y)+u(x-y)-2u(x)]}{|y|^{N+2s}}\psi(x) dydx\\
&=\int_{\mathbb{R}^N}h^\prime(u)(-\Delta)^s u\psi dx.
\end{split}
\end{equation*}
This completes the proof.
\end{proof}
\begin{Rem}\label{rem3.1}
In fact, from the above arguments, one can see that \eqref{eq3.4} holds for a.e. $x\in \mathbb{R}^N$. Moreover, Lemma \ref{Lem3.1} is true for general dimension $N$.
\end{Rem}
\par
The following uniform $L^\infty$-estimate plays a fundamental role in the study of behavior of the maximum points of the solutions, whose proof is related to the Moser iterative method \cite{Moser1960CPAM}. A similar result for the fractional Schr\"{o}dinger equation can be found  in \cite{MR3617721} or \cite{MR4230968}.

\begin{Lem}\label{Lem3.2}
Let $\varepsilon\rightarrow0^+$ and $u_{\varepsilon}$ be a positive solution of \eqref{eq1.1}. Then up to a subsequence, $\bar{u}_{\varepsilon}(x):=u_{\varepsilon}(\varepsilon x+\varepsilon y_{\varepsilon})$ satisfies that $u_{\varepsilon}\in L^\infty(\mathbb{R}^N)$ and there exists $C > 0$ such that
\begin{equation*}
\|\bar{u}_{\varepsilon}\|_{L^\infty(\mathbb{R}^N)}\leq C.
\end{equation*}
\end{Lem}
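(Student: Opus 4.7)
The plan is to run a Moser iteration in the rescaled variables, in the same spirit as for fractional Schrödinger equations. The first step is to obtain a uniform $H^s$-bound on $\bar u_\varepsilon$. From Section~2 we have $u_\varepsilon=U_{\varepsilon,y_\varepsilon}+\varphi_{\varepsilon,y_\varepsilon}$ with $\|\varphi_{\varepsilon,y_\varepsilon}\|_\varepsilon=o(\varepsilon^{N/2})$, while $\|U_{\varepsilon,y_\varepsilon}\|_\varepsilon=O(\varepsilon^{N/2})$. Under the change of variables $x\mapsto\varepsilon x+\varepsilon y_\varepsilon$ the $L^2$-norm picks up a factor $\varepsilon^{-N/2}$ and the Gagliardo seminorm $[\,\cdot\,]_{H^s}$ a factor $\varepsilon^{-(N-2s)/2}$, so $\|\bar u_\varepsilon\|_{H^s(\mathbb R^N)}\le C$ uniformly in $\varepsilon$. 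Consequently the Kirchhoff coefficient $A_\varepsilon:=a+b\int_{\mathbb R^N}|(-\Delta)^{s/2}\bar u_\varepsilon|^2\,dx$ is pinched between $a$ and some $M>0$, and dividing \eqref{eq3.2} by $A_\varepsilon$ rewrites the equation as
\begin{equation*}
(-\Delta)^s\bar u_\varepsilon+\tilde V_\varepsilon(x)\bar u_\varepsilon=A_\varepsilon^{-1}\bar u_\varepsilon^{p},\qquad \tilde V_\varepsilon=\bar V_\varepsilon/A_\varepsilon,
\end{equation*}
a uniformly elliptic nonlocal Schrödinger equation with bounded potential and subcritical nonlinearity $p<2^*_s-1$.

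Next I would carry out the Moser iteration. For $L>0$ and $\beta\ge 1$, set $\bar u_{\varepsilon,L}=\min\{\bar u_\varepsilon,L\}$ and test the reduced equation against $\psi=\bar u_\varepsilon\bar u_{\varepsilon,L}^{2(\beta-1)}\in H^s(\mathbb R^N)$, which is admissible because $\bar u_{\varepsilon,L}$ is Lipschitz in $\bar u_\varepsilon$ and bounded. Applying Lemma~\ref{Lem3.1} to the convex Lipschitz function $h(t)=t\min\{t,L\}^{\beta-1}$ (extended by $0$ for $t\le 0$), together with the elementary inequality
\begin{equation*}
(A-B)\bigl(A\min\{A,L\}^{2(\beta-1)}-B\min\{B,L\}^{2(\beta-1)}\bigr)\ge \tfrac{1}{\beta}\bigl(A\min\{A,L\}^{\beta-1}-B\min\{B,L\}^{\beta-1}\bigr)^{2},
\end{equation*}
yields the Moser-type energy estimate
\begin{equation*}
[\bar u_\varepsilon\bar u_{\varepsilon,L}^{\beta-1}]_{H^s}^{2}\le C\beta\int_{\mathbb R^N}\bar u_\varepsilon^{p-1}\bigl(\bar u_\varepsilon\bar u_{\varepsilon,L}^{\beta-1}\bigr)^{2}\,dx,
\end{equation*}
in which the potential term has been dropped as it is nonnegative. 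Combining the fractional Sobolev embedding $H^s\hookrightarrow L^{2^*_s}$ on the left with Hölder's inequality on the right, and using $p-1<2^*_s-2$ so that $\bar u_\varepsilon^{p-1}$ lies in a subcritical Lebesgue space controlled by $\|\bar u_\varepsilon\|_{H^s}$, gives an iteration inequality of the form
\begin{equation*}
\|\bar u_\varepsilon\bar u_{\varepsilon,L}^{\beta-1}\|_{L^{2^*_s}}\le (C\beta)^{1/2}\|\bar u_\varepsilon\bar u_{\varepsilon,L}^{\beta-1}\|_{L^{q}}
\end{equation*}
for some fixed $q<2^*_s$ with constant independent of $\varepsilon$, $L$, $\beta$. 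Sending $L\to\infty$ by Fatou and iterating $\beta_{k+1}=\beta_k\cdot 2^*_s/q$, starting from $\bar u_\varepsilon\in L^{2^*_s}$ provided by the $H^s$-bound, promotes $\bar u_\varepsilon$ into every $L^{r}$ with uniform constants, and a final Moser step produces the uniform $L^\infty$-bound.

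The main obstacle is the fractional chain-rule step: unlike the classical Laplacian, there is no pointwise identity $(-\Delta)^s h(u)=h'(u)(-\Delta)^s u$, so one cannot simply mimic the local Moser argument. Lemma~\ref{Lem3.1} supplies precisely the one-sided inequality that makes the truncation argument work, and the Kirchhoff factor $A_\varepsilon$, which in principle could degenerate or blow up and spoil the iteration, is neutralized at the very beginning by the two-sided control coming from the uniform $H^s$-bound. With these two ingredients in place, the iteration reduces to the standard subcritical fractional Schrödinger Moser scheme of the cited references \cite{MR3617721,MR4230968}, and the uniformity in $\varepsilon$ follows because every constant in the argument depends only on $\|\bar u_\varepsilon\|_{H^s}$, $\|\bar V_\varepsilon\|_\infty$ and the two-sided bounds on $A_\varepsilon$.
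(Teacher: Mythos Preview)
Your proposal is correct and follows essentially the same Moser iteration strategy as the paper: uniform $H^s$-control on $\bar u_\varepsilon$, neutralization of the Kirchhoff factor by two-sided bounds, and then a truncated Moser scheme exploiting the fractional chain rule (Lemma~\ref{Lem3.1}) together with the Sobolev embedding. The paper's version differs only in details: it uses the convex truncation $H(t)$ that is linear with slope $\beta T^{\beta-1}$ above $T$, bounds the nonlinearity crudely by $1+|\bar u_\varepsilon|^{2^*_s-1}$ and then absorbs the would-be critical term via a small-measure splitting $\{\bar u_\varepsilon>R\}$, whereas you use the subcriticality $p-1<2^*_s-2$ directly through H\"older. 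Both routes lead to the same iteration inequality.

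One small slip to fix: the function $h(t)=t\min\{t,L\}^{\beta-1}$ you cite is \emph{not} convex for $\beta>1$ (its derivative drops from $\beta L^{\beta-1}$ to $L^{\beta-1}$ at $t=L$), so Lemma~\ref{Lem3.1} does not apply to it as stated. This does not damage your argument, because the elementary algebraic inequality you wrote is exactly what one needs in the double-integral (Gagliardo) formulation of the weak form, and it does not require convexity of $h$; simply drop the reference to Lemma~\ref{Lem3.1} at that step, or replace $h$ by the paper's convex $H$.
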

\begin{proof}
By standard variational method, we can know that $\{\bar{u}_{\varepsilon}\}$ has a convergent subsequence still denote by $\{\bar{u}_{\varepsilon}\}$. Therefore, there exists some $C>0$ such that
\begin{equation*}
  \|\bar{u}_{\varepsilon}\|_\varepsilon\leq C,
\end{equation*}
and hence
\begin{equation}\label{eq3.5}
  \|\bar{u}_{\varepsilon}\|_{2_s^*}\leq C.
\end{equation}
\par
 Let $T > 0$, we define
\begin{equation}\label{eq3.6}
H(t)=\left\{ \begin{array}{ll}0,~~~~~~~~~ ~~~~~~~~~~~~~~~~~~~~~\text{if}~t\leq 0,\\
t^\beta,~~~~~~~~~~~~~~~~~~~~~~~~~~~~~\text{if}~0<t<T,\\
\beta T^{\beta-1}(t-T)+T^\beta,~~~~~~\text{if}~t\geq T,\\
\end{array} \right.
\end{equation}
with $\beta>1$ to be determined later.
\par
Since $H$ is convex and Lipschitz, by Lemma \ref{Lem3.1},
\begin{equation}\label{eq3.7}
(-\Delta )^sH(u)\leq H^{\prime}(u)(-\Delta )^su.
\end{equation}
Thus, by \eqref{eq3.2}, \eqref{eq3.7} and the Sobolev embedding theorem, we have
\begin{equation}\label{eq3.8}
\aligned
\|H(\bar{u}_{\varepsilon})\|^2_{L^{2^*_s}(\mathbb{R}^N)}&\leq S^{-1}\int_{\mathbb{R}^N}|(-\Delta)^{\frac{s}{2}}H(\bar{u}_{\varepsilon})|^2dx\\
&\leq \frac{1}{Sa}\bigg(a+ b \int\int_{\mathbb{R}^{N}\times\mathbb{R}^{N}}\frac{|\bar{u}_{\varepsilon}(x)-\bar{u}_{\varepsilon}(y)|^2}{|x-y|^{N+2s}}dxdy\bigg) \int_{\mathbb{R}^N}|(-\Delta)^{\frac{s}{2}}H(\bar{u}_{\varepsilon})|^2dx\\
&\leq C\bigg(a+ b \int\int_{\mathbb{R}^{N}\times\mathbb{R}^{N}}\frac{|\bar{u}_{\varepsilon}(x)-\bar{u}_{\varepsilon}(y)|^2}{|x-y|^{N+2s}}dxdy\bigg) \int_{\mathbb{R}^N}H(\bar{u}_{\varepsilon})H^{\prime}(\bar{u}_{\varepsilon})(-\Delta)^s\bar{u}_{\varepsilon}dx\\
&=C\int_{\mathbb{R}^N}H(u)H^{\prime}(u)\frac{-V(\varepsilon_n x+\varepsilon_n \tilde{y}_n)\bar{u}_{\varepsilon}+\bar{u}_{\varepsilon}^p}{a+b[\bar{u}_{\varepsilon}]_{\mathcal{D}^{s,2}}^2}dx\\
&\leq C\int_{\mathbb{R}^N}H(u)H^{\prime}(u)[-V(\varepsilon_n x+\varepsilon_n \tilde{y}_n)\bar{u}_{\varepsilon}+\bar{u}_{\varepsilon}^p]dx\\
&\leq C\int_{\mathbb{R}^N}H(\bar{u}_{\varepsilon})H^{\prime}(\bar{u}_{\varepsilon})(1+|\bar{u}_{\varepsilon}|^{2_s^*-1})dx.
\endaligned
\end{equation}
Using the  fact $H(\bar{u}_{\varepsilon})H^{\prime}(\bar{u}_{\varepsilon})\leq \beta^2 \bar{u}_{\varepsilon}^{2\beta-1}$ and $\bar{u}_{\varepsilon}H^{\prime}(\bar{u}_{\varepsilon})\leq \beta H(\bar{u}_{\varepsilon})$, we obtain
\begin{equation}\label{eq3.9}
\aligned
\|H(\bar{u}_{\varepsilon})\|^2_{L^{2^*_s}(\mathbb{R}^N)}&\leq C\int_{\mathbb{R}^N}\bigg(\beta^2 \bar{u}_{\varepsilon}^{2\beta-1}+\beta|\bar{u}_{\varepsilon}|^{2^*_s-2} H^2(\bar{u}_{\varepsilon})\bigg) dx\\
&\leq C\beta^2 \int_{\mathbb{R}^N}\bigg( \bar{u}_{\varepsilon}^{2\beta-1}+|\bar{u}_{\varepsilon}|^{2^*_s-2} H^2(\bar{u}_{\varepsilon})\bigg) dx,
\endaligned
\end{equation}
where $C$ is independent of $\beta$. Notice that the last integral in  \eqref{eq3.9} is well defined for every $T$ in the definition of $H$.
\par
Now we choose $\beta$ in \eqref{eq3.9} such that $2\beta-1=2_s^*$, and denote it as
$$\beta_1:=\frac{2_s^*+1}{2}.$$
Let $R > 0$ to be fixed later. For the last integral in \eqref{eq3.9}, we apply the H\"{o}lder's inequality with exponents $r=\frac{2_s^*}{2}$ and $r^{\prime}=\frac{2_s^*}{2_s^*-2}$,
\begin{equation}\label{eq3.10}
\aligned
\int_{\mathbb{R}^N}|\bar{u}_{\varepsilon}|^{2^*_s-2} H^2(\bar{u}_{\varepsilon})dx&= \int_{\{\bar{u}_{\varepsilon}\leq R\}}|\bar{u}_{\varepsilon}|^{2^*_s-2} H^2(\bar{u}_{\varepsilon})dx +\int_{\{\bar{u}_{\varepsilon}> R\}}|\bar{u}_{\varepsilon}|^{2^*_s-2} H^2(\bar{u}_{\varepsilon})dx \\
&\leq R^{2_s^*-1} \int_{\{\bar{u}_{\varepsilon}\leq R\}}\frac{ H^2(\bar{u}_{\varepsilon})}{\bar{u}_{\varepsilon}}dx+\bigg(\int_{\{\bar{u}_{\varepsilon}> R\}}|\bar{u}_{\varepsilon}|^{2^*_s}dx\bigg)^{\frac{2_s^*-2}{2_s^*}}\\
&~~\times \bigg(\int_{R^N}|H(\bar{u}_{\varepsilon})|^{2^*_s}dx\bigg)^{\frac{2}{2_s^*}}.
\endaligned
\end{equation}
Since $\{\bar{u}_{\varepsilon}\}$ is bounded in $H_\varepsilon$, we can take $R$ large enough such that
\begin{equation}\label{eq3.11}
\bigg(\int_{\{\bar{u}_{\varepsilon}> R\}}|\bar{u}_{\varepsilon}|^{2^*_s}dx\bigg)^{\frac{2_s^*-2}{2_s^*}}\leq\frac{1}{2C\beta_1^2}.
\end{equation}
From  \eqref{eq3.9}-\eqref{eq3.11} we have
\begin{equation}\label{eq3.12}
\bigg(\int_{\mathbb{R}^N}|H(\bar{u}_{\varepsilon})|^{2^*_s}dx\bigg)^{\frac{2}{2_s^*}}\leq 2C\beta_1^2\bigg(\int_{\mathbb{R}^N}\bar{u}_{\varepsilon}^{2_s^*}dx+ R^{2_s^*-1} \int_{\mathbb{R}^N}\frac{ H^2(\bar{u}_{\varepsilon})}{\bar{u}_{\varepsilon}}dx \bigg).
\end{equation}
Using the fact that $ H(\bar{u}_{\varepsilon})\leq \bar{u}_{\varepsilon}^{\beta_1}$ in the right hand side and taking $T\rightarrow\infty$ we obtain
\begin{equation*}
\bigg(\int_{\mathbb{R}^N}|\bar{u}_{\varepsilon}|^{2^*_s\beta_1}dx\bigg)^{\frac{2}{2_s^*}}\leq 2C\beta_1^2\bigg(\int_{\mathbb{R}^N}\bar{u}_{\varepsilon}^{2_s^*}dx+ R^{2_s^*-1} \int_{\mathbb{R}^N}\bar{u}_{\varepsilon}^{2_s^*}dx \bigg)<\infty,
\end{equation*}
hence
\begin{equation*}
\bar{u}_{\varepsilon}\in L^{2^*_s\beta_1}(\mathbb{R}^N).
\end{equation*}
Together with \eqref{eq3.5}, we have
\begin{equation}\label{eq3.13}
\|\bar{u}_{\varepsilon}\|_{2^*_s\beta_1}\leq C,
\end{equation}
uniformly in $\varepsilon$.
Now we suppose $\beta>\beta_1$. Thus, using that $ H(\bar{u}_{\varepsilon})\leq \bar{u}_{\varepsilon}^{\beta}$ in the right hand side of \eqref{eq3.9} and taking $T\rightarrow \infty $, we get
\begin{equation}\label{eq3.14}
\bigg(\int_{\mathbb{R}^N}|\bar{u}_{\varepsilon}|^{2^*_s\beta}dx\bigg)^{\frac{2}{2_s^*}}\leq C\beta^2\bigg(\int_{\mathbb{R}^N}\bar{u}_{\varepsilon}^{2\beta-1}dx+ \int_{\mathbb{R}^N}\bar{u}_{\varepsilon}^{2\beta+2_s^*-2}dx \bigg)<\infty.
\end{equation}
Set $a_1:=\frac{2^*_s(2^*_s-1)}{2(\beta-1)}$ and $b_1:=2\beta-1-a_1$. Note that $\beta>\beta_1$, we see that $a_1\in(0,~2^*_s)$ and $b_1>0$. Thus, by using Young's inequality with exponents $r=\frac{2_s^*}{a_1}$ and $r^{\prime}=\frac{2_s^*}{2_s^*-a_1}$,, we have
\begin{equation}\label{eq3.15}
\aligned
\int_{\mathbb{R}^N}\bar{u}_{\varepsilon}^{2\beta-1}dx&\leq \frac{a_1}{2_s^*}\int_{\mathbb{R}^N}\bar{u}_{\varepsilon}^{2_s^*}dx+\frac{2_s^*-a_1}{2_s^*}\int_{\mathbb{R}^N}\bar{u}_{\varepsilon}^{\frac{2_s^*b_1}{2_s^*-a_1}}dx\\
&\leq\int_{\mathbb{R}^N}\bar{u}_{\varepsilon}^{2_s^*}dx+\int_{\mathbb{R}^N}\bar{u}_{\varepsilon}^{2\beta+2_s^*-2}dx\\
&\leq C\bigg(1+\int_{\mathbb{R}^N}\bar{u}_{\varepsilon}^{2\beta+2_s^*-2}dx\bigg).
\endaligned
\end{equation}
Combining  \eqref{eq3.14} and  \eqref{eq3.15}, we conclude that
\begin{equation}\label{eq3.16}
\bigg(\int_{\mathbb{R}^N}|\bar{u}_{\varepsilon}|^{2^*_s\beta}dx\bigg)^{\frac{2}{2_s^*}}\leq C\beta^2\bigg(1+\int_{\mathbb{R}^N}\bar{u}_{\varepsilon}^{2\beta+2_s^*-2}dx\bigg),
\end{equation}
where $C$ remains independently of $\beta$. Therefore,
\begin{equation}\label{eq3.17}
\bigg(1+\int_{\mathbb{R}^N}|\bar{u}_{\varepsilon}|^{2^*_s\beta}dx\bigg)^{\frac{1}{2_s^*(\beta-1)}}\leq (C\beta^2)^{\frac{1}{2(\beta-1)}}\bigg(1+\int_{\mathbb{R}^N}\bar{u}_{\varepsilon}^{2\beta+2_s^*-2}dx\bigg)^{\frac{1}{2(\beta-1)}}.
\end{equation}
Repeating this argument we will define a sequence $\beta_i,~i\geq1$ such that
\begin{equation*}
2\beta_{i+1}+2^*_s-2=2^*_s\beta_i.
\end{equation*}
Thus,
\begin{equation*}
\beta_{i+1}-1=\bigg(\frac{2^*_s}{2}\bigg)^i(\beta_1-1).
\end{equation*}
Replacing it in \eqref{eq3.17} one has
\begin{equation}\label{eq3.18}
\bigg(1+\int_{\mathbb{R}^N}|\bar{u}_{\varepsilon}|^{2^*_s\beta_{i+1}}dx\bigg)^{\frac{2}{2_s^*(\beta_{i+1}-1)}}\leq (C\beta_{i+1}^2)^{\frac{1}{2(\beta_{i+1}-1)}}\bigg(1+\int_{\mathbb{R}^N}\bar{u}_{\varepsilon}^{2\beta_{i+1}+2_s^*-2}dx\bigg)^{\frac{1}{2(\beta_{i+1}-1)}}.
\end{equation}
Denoting $C_{i+1}=C\beta^2_{i+1}$ and
\begin{equation*}
K_i:=\bigg(1+\int_{\mathbb{R}^N}\bar{u}_{\varepsilon}^{2\beta_{i}+2_s^*-2}dx\bigg)^{\frac{1}{2(\beta_{i}-1)}}.
\end{equation*}
So we can rewrite \eqref{eq3.18} as
\begin{equation*}
  K_{i+1}\leq C_{i+1}^{\frac{1}{2(\beta_{i+1}-1)}K_i},
\end{equation*}
and hence we conclude that there exists a constant $D>0$ independent of $i$, such that
\begin{equation*}
K_{i+1}\leq\prod_{j=2}^{i+1}C_j^{\frac{1}{2(\beta_{i}-1)}}K_2\leq DK_1.
\end{equation*}
Therefore,
\begin{equation*}
\bar{u}_{\varepsilon}\in L^{\infty}(\mathbb{R}^N),~\forall \varepsilon.
\end{equation*}
Jointly with \eqref{eq3.13},
\begin{equation*}
\|\bar{u}_{\varepsilon}\|_{L^\infty(\mathbb{R}^N)}\leq C,
\end{equation*}
uniformly in $\varepsilon$. This finishes the proof of Lemma \ref{Lem3.2}.
\end{proof}

\subsection{Concentration behavior of solutions}
Now we are ready to give the proof of concentration behavior of solutions obtained in Theorem \ref{Thm1.1}. Let $u_{\varepsilon}$ be a positive solution of  \eqref{eq1.1} obtained in Theorem \ref{Thm1.1}, then $\bar{u}_{\varepsilon}(x):=u_{\varepsilon}(\varepsilon x+\varepsilon y_{\varepsilon})$ is a solution of the problem
\begin{equation}\label{eq3.19}
\left(a+b \int_{\mathbb{R}^{N}}\left|(-\Delta)^{\frac{s}{2}} \bar{u}_{\varepsilon}\right|^{2}dx\right)(-\Delta )^s \bar{u}_{\varepsilon}+\bar{V}_{\varepsilon}(x) \bar{u}_{\varepsilon}=\bar{u}_{\varepsilon}^{p} \quad \text { in } \mathbb{R}^{N},
\end{equation}
with $\{y_\varepsilon\}\subset\mathbb{R}^N$, $\bar{V}_{\varepsilon}(x):=V(\varepsilon x+\varepsilon y_\varepsilon)$.
From \cite[Lemma 4.6]{MR4230968}, we have
\begin{equation*}
\lim\limits_{\varepsilon\rightarrow 0}I_{\varepsilon}(u_{\varepsilon})=\lim\limits_{\varepsilon\rightarrow 0}\mathcal{J}_{\varepsilon}(\bar{u}_{\varepsilon})=c_{V(x_0)},
\end{equation*}
where
\begin{equation*}
\mathcal{J}_{\varepsilon}(\bar{u}_{\varepsilon})=\frac{a}{2}\int_{\mathbb{R}^{N}}|(-\Delta)^{\frac{s}{2}}\bar{u}_{\varepsilon}|^2dx+\frac{1}{2}
\int_{\mathbb{R}^N}\bar{V}_{\varepsilon}(x)\bar{u}_{\varepsilon}^2dx+\frac{b}{4}(\int_{\mathbb{R}^{N}}|(-\Delta)^{\frac{s}{2}}\bar{u}_{\varepsilon}|^2dx)^2-\frac{1}{p+1}\int_{\mathbb{R}^N}\bar{u}_{\varepsilon}^{p+1}dx
\end{equation*}
is the corresponding energy functional of \eqref{eq3.19} and we define the ground energy corresponding to \eqref{eq3.19} by
\begin{equation*}
c_{\varepsilon}:=\inf_{u\in\mathcal{N}_{\varepsilon}}\mathcal{J}_{\varepsilon},
\end{equation*}
with the Nehari manifold associated to $\mathcal{J}_{\varepsilon}$ defined as
\begin{equation*}
\mathcal{N}_{\varepsilon}:=\{u\in H_\varepsilon\backslash\{0\}:\langle\mathcal{J}_{\varepsilon}'(u),u\rangle=0\}.
\end{equation*}
Similar to  \cite[Lemma 4.8]{MR4230968}, after extracting a subsequence, we have
\begin{equation}\label{eq3.20}
  \bar{u}_{\varepsilon}\to u,~\hbox{in}~H^s(\mathbb{R}^N),
\end{equation}
and
\begin{equation}\label{eq3.21}
  \tilde{y}_\varepsilon:=\varepsilon y_\varepsilon\rightarrow y\in\Lambda:=\{y\in\mathbb{R}^N: V(y)=V(x_0)\}.
\end{equation}
{\bf Claim 1:} $u$ is a positive ground state solution to the limit equation
\begin{equation}\label{eq3.22}
\left(a+b \int_{\mathbb{R}^{N}}\left|(-\Delta)^{\frac{s}{2}}u\right|^{2}dx\right)(-\Delta )^su(x)+V(x_0)u=u^p \quad \text{in}\quad \mathbb{R}^{N}.
\end{equation}
\par
In fact, for any $\varphi\in H^s(\mathbb{R}^N)$, observe that
\begin{equation*}
\left(a+b \int_{\mathbb{R}^{N}}\left|(-\Delta)^{\frac{s}{2}} \bar{u}_{\varepsilon}\right|^{2}dx\right)\int_{\mathbb{R}^{N}}(-\Delta )^s\bar{u}_{\varepsilon}\varphi dx+\int_{\mathbb{R}^{N}}\bar{V}_{\varepsilon}(x)\bar{u}_{\varepsilon}\varphi dx=\int_{\mathbb{R}^{N}}\bar{u}_{\varepsilon}^p\varphi dx
\end{equation*}
and by \eqref{eq3.20} \eqref{eq3.21} and  the fact that $V$ is uniformly continuous,
\begin{equation}\label{eq3.23}
\aligned
&\Big{|}\int_{\mathbb{R}^N}\bar{V}_{\varepsilon}(x)\bar{u}_{\varepsilon}\varphi dx-\int_{\mathbb{R}^N}V(x_0)u\varphi dx\Big{|} \leq\int_{\mathbb{R}^N}\Big{|}\bar{V}_{\varepsilon}(x)\big(\bar{u}_{\varepsilon}\varphi -u\varphi\big)\Big{|}dx +\int_{\mathbb{R}^N}\Big{|}\big(\bar{V}_{\varepsilon}(x)-V(x_0)\big)u\varphi\Big{|}dx\to 0.
\endaligned
\end{equation}
By \eqref{eq3.20}, \eqref{eq3.21} and \eqref{eq3.23}, it is easy to check that
\begin{equation*}
\left(a+b \int_{\mathbb{R}^{N}}\left|(-\Delta)^{\frac{s}{2}}u\right|^{2}dx\right)\int_{\mathbb{R}^{N}}(-\Delta )^su\varphi dx+V(x_0)\int_{\mathbb{R}^{N}}u\varphi dx=\int_{\mathbb{R}^{N}}u^p\varphi dx,~\forall \varphi\in H^s(\mathbb{R}^N),
\end{equation*}
which yields that $u$ is a solution to \eqref{eq3.22}.
\par
On the other hand, by Fatou's Lemma, we have
\begin{equation*}
\aligned
c_{V(x_0)}&\leq \frac{a}{2}\int_{\mathbb{R}^{N}}|(-\Delta)^{\frac{s}{2}}u|^2dx+\frac{V(x_0)}{2}
\int_{\mathbb{R}^N}u^2dx+\frac{b}{4}\bigg(\int_{\mathbb{R}^N}|(-\Delta)^{\frac{s}{2}}u|^2dx\bigg)^2-\frac{1}{p+1}\int_{\mathbb{R}^N}u^{p+1}dx\\
&=\frac{a}{4}\int_{\mathbb{R}^{N}}|(-\Delta)^{\frac{s}{2}}u|^2dx+\frac{V(x_0)}{4}
\int_{\mathbb{R}^N}u^2dx\\
&\leq \liminf_{n\to+\infty}\bigg(\frac{a}{4}\int_{\mathbb{R}^{N}}|(-\Delta)^{\frac{s}{2}}\bar{u}_{\varepsilon}|^2dx
+\frac{1}{4}
\int_{\mathbb{R}^N}\bar{V}_{\varepsilon}(x)\bar{u}_{\varepsilon}^2dx\bigg)\\
&= \liminf_{n\to+\infty}(\mathcal{J}_{\varepsilon_n}(\bar{u}_{\varepsilon})-\frac{1}{4}\langle \mathcal{J}_{\varepsilon_n}^\prime(\bar{u}_{\varepsilon}),\bar{u}_{\varepsilon}\rangle)\\
&=c_{V(x_0)}.
\endaligned
\end{equation*}
Thus, $u$ is a positive ground state solution to \eqref{eq3.22}. \\
{\bf Claim 2:} $\bar{u}_{\varepsilon}(x)\rightarrow 0$ as $|x|\rightarrow \infty$, uniformly in $\varepsilon$.
\par
Indeed, we rewrite \eqref{eq3.19} as
\begin{equation*}
(-\Delta)^s\bar{u}_{\varepsilon}+\bar{u}_{\varepsilon}=\Upsilon_{\varepsilon}, ~\text{in}\ \mathbb{R}^N.
\end{equation*}
where
\begin{equation*}
\Upsilon_{\varepsilon}(x)=(a+b\int_{\mathbb{R}^N}|(-\Delta)^{\frac{s}{2}}\bar{u}_{\varepsilon}|^2dx)^{-1}[\bar{u}_{\varepsilon}(x)^p-\bar{V}_{\varepsilon}(x)\bar{u}_{\varepsilon}(x)]+\bar{u}_{\varepsilon}(x).
\end{equation*}
Then we know from \cite{Felmer-Quaas-Tan2012PRSESA} that
\begin{equation*}
\bar{u}_{\varepsilon}=\mathcal{K}*\Upsilon_{\varepsilon}=\int_{\mathbb{R}^N}\mathcal{K}(x-y)\Upsilon_{\varepsilon}(y)dy,
\end{equation*}
where $\mathcal{K}$ is the Bessel kernel. Moreover, $\mathcal{K}$ has the following properties:
\begin{itemize}
  \item $\mathcal{K}$ is positive, radially symmetric and smooth in $\mathbb{R}^N\setminus \{0\}$,
  \item there is $C>0$ such that $\mathcal{K}(x)\leq \frac{C}{|x|^{N+2s}}$,
  \item $\mathcal{K}\in L^q(\mathbb{R}^N),~\forall q\in [1, \frac{N}{(N-2s)})$.
\end{itemize}
By \eqref{eq3.20}, Lemma \ref{Lem3.2} and its proof and the interpolation on the $L^p-$spaces,
\begin{equation}\label{eq3.24}
\bar{u}_{\varepsilon}\rightarrow u, ~\text{in}~L^p(\mathbb{R}^N), ~\forall p\in (2,+\infty).
\end{equation}
Set
\begin{equation*}
\Upsilon(x)=(a+b\int_{\mathbb{R}^N}|(-\Delta)^{\frac{s}{2}}u|^2dx)^{-1}[u(x)^p-V(y)u(x)]+u(x).
\end{equation*}
It follows from \eqref{eq3.20} and \eqref{eq3.24} that
\begin{equation}\label{eq3.25}
  \Upsilon_{\varepsilon}\rightarrow \Upsilon, ~\text{in}~L^p(\mathbb{R}^N),~\forall~p\in (2,+\infty)
\end{equation}
and
\begin{equation}\label{eq3.26}
  \|\Upsilon_{\varepsilon}\|_{L^\infty(\mathbb{R}^N)}\leq C
\end{equation}
for some $C>0$ and all $\varepsilon$.
\par
 From \eqref{eq3.24}-\eqref{eq3.26}, repeating the proof of \cite[Lemma 2.6]{Alves-Miyagaki2016PDE} with small modifications, we conclude that
 \begin{equation*}
   \bar{u}_{\varepsilon}(x)\rightarrow 0~\text{as}~|x|\rightarrow \infty,
 \end{equation*}
uniformly in $\varepsilon$.
\par
{\bf Claim 3:} There exist $C>0$ such that
\begin{equation*}
\bar{u}_{\varepsilon}(x)\leq \frac{C}{1+|x|^{N+2s}},\ \forall\ x\in \mathbb{R}^N.
\end{equation*}
\par
In fact, according to \cite[Lemma 4.2]{Felmer-Quaas-Tan2012PRSESA}, there exists a continuous function $\bar{\omega}$ such that
\begin{equation}\label{eq3.27}
0<\bar{\omega}(x)\leq \frac{C}{1+|x|^{N+2s}},
\end{equation}
and
\begin{equation}\label{eq3.28}
(-\Delta)^s\bar{\omega}+\frac{V(x_0)}{2(a+2b[u]_{\mathcal{D}^{s,2}}^2)}\bar{\omega}=0,\text{in}\ \mathbb{R}^N\setminus B_{\bar{R}}(0)
\end{equation}
for some suitable $\bar{R}>0$.
From \eqref{eq3.20}, $\bar{u}_{\varepsilon}\to u$ in $L^2(\mathbb{R}^N)$, and hence
\begin{equation*}
[\bar{u}_{\varepsilon}]_{\mathcal{D}^{s,2}}\to [u]_{\mathcal{D}^{s,2}}.
\end{equation*}
Since $\bar{u}_{\varepsilon}$ solves \eqref{eq3.19} and  $\bar{u}_{\varepsilon}(x)\rightarrow 0$ as $|x|\rightarrow \infty$ uniformly in $\varepsilon$, then, for some large $R_1>0$, we obtain
\begin{equation}\label{eq3.29}
\begin{split}
(-\Delta)^s\bar{u}_{\varepsilon}+\frac{V(x_0)}{2(a+2b[u]_{\mathcal{D}^{s,2}}^2)}\bar{u}_{\varepsilon}&=\frac{\bar{u}_{\varepsilon}^p-\bar{V}_{\varepsilon}(x)\bar{u}_{\varepsilon}}
{a+b[\bar{u}_{\varepsilon}]_{\mathcal{D}^{s,2}}^2}+\frac{V(x_0)}{2(a+2b[u]_{\mathcal{D}^{s,2}}^2)}\bar{u}_{\varepsilon}\\
&=\frac{\bar{u}_{\varepsilon}^p-\bar{V}_{\varepsilon}(x)\bar{u}_{\varepsilon}+(a+b[\bar{u}_{\varepsilon}]_{\mathcal{D}^{s,2}}^2)\frac{V(x_0)}{2(a+2b[u]_{\mathcal{D}^{s,2}}^2)}\bar{u}_{\varepsilon}}
{a+b[\bar{u}_{\varepsilon}]_{\mathcal{D}^{s,2}}^2}\\
&\leq \frac{\bar{u}_{\varepsilon}^p-\bar{V}_{\varepsilon}(x)\bar{u}_{\varepsilon}+\frac{V(x_0)}{2}\bar{u}_{\varepsilon}}{a+b[\bar{u}_{\varepsilon}]_{\mathcal{D}^{s,2}}^2}\\
&\leq \frac{\bar{u}_{\varepsilon}^p-\frac{V(x_0)}{2}\bar{u}_{\varepsilon}}{a+b[\bar{u}_{\varepsilon}]_{\mathcal{D}^{s,2}}^2}\\
&\leq 0,
\end{split}
\end{equation}
for $x\in \mathbb{R}^N\setminus B_{R_1}(0)$. Now we take $R_2:=\max\{\bar{R},R_1\}$ and set
\begin{equation}\label{eq3.30}
z_{\varepsilon}:=(\alpha+1)\bar{\omega}-\beta \bar{u}_{\varepsilon},
\end{equation}
where $\alpha:=\sup\limits_{n\in \mathbb{N}}\|\bar{u}_{\varepsilon}\|_\infty<\infty$ and $\beta:=\min\limits_{\bar{B}_{R_2}(0)} \bar{\omega}>0$. We next show that $z_{\varepsilon}\geq 0$ in $\mathbb{R}^N$. For this we suppose by contradiction that, there is a sequence $\{x_{\varepsilon}^j\}$ such that
\begin{equation}\label{eq3.31}
\inf_{x\in \mathbb{R}^N} z_{\varepsilon}(x)=\lim_{j\rightarrow \infty} z_{\varepsilon}(x_{\varepsilon}^j)<0.
\end{equation}
Observe that
\begin{equation*}
\lim_{|x|\rightarrow \infty}\bar{\omega}(x)=0.
\end{equation*}
Combining with $\bar{u}_{\varepsilon}(x)\rightarrow 0$ as $|x|\rightarrow \infty$ uniformly in $\varepsilon$, we obtain
\begin{equation*}
\lim\limits_{|x|\rightarrow\infty} z_{\varepsilon}(x)=0,
\end{equation*}
uniformly in $\varepsilon$. Consequently, the sequence $\{x_{\varepsilon}^j\}$ is bounded and therefore, up to a subsequence, we may assume that $x_{\varepsilon}^j\rightarrow x_{\varepsilon}^*$ as $j\rightarrow\infty$ for some $x_{\varepsilon}^*\in \mathbb{R}^N$. Hence \eqref{eq3.31} becomes
\begin{equation}\label{eq3.32}
z_{\varepsilon}(x_{\varepsilon}^*)=\inf_{x\in \mathbb{R}^N} z_{\varepsilon}(x)<0.
\end{equation}
From \eqref{eq3.32}, we have
\begin{equation}\label{eq3.33}
(-\Delta)^sz_{\varepsilon}(x_{\varepsilon}^*)=-\frac{C(s)}{2}\int_{\mathbb{R}^N}\frac{z_{\varepsilon}(x_{\varepsilon}^*+y)+z_{\varepsilon}(x_{\varepsilon}^*-y)-2z_{\varepsilon}(x_{\varepsilon}^*)}{|y|^{N+2s}}dy\leq 0.
\end{equation}
By \eqref{eq3.30}, we get
\begin{equation*}
z_{\varepsilon}(x)\geq \alpha\beta+\bar{\omega}-\alpha\beta>0,~\text{in}~B_{R_2}(0).
\end{equation*}
Therefore, combining this with \eqref{eq3.32}, we see that
\begin{equation}\label{eq3.34}
x_{\varepsilon}^*\in \mathbb{R}^N\setminus B_{R_2}(0).
\end{equation}
From \eqref{eq3.28}-\eqref{eq3.29}, we conclude that
\begin{equation}\label{eq3.35}
(-\Delta)^sz_{\varepsilon}+\frac{V(x_0)}{2(a+2b[u]_{\mathcal{D}^{s,2}}^2)}z_{\varepsilon}\geq 0,~\text{in}~\mathbb{R}^N\setminus B_{R_2}(0).
\end{equation}
Thanks to \eqref{eq3.34}, we can evaluate \eqref{eq3.35} at the point $x_{\varepsilon}^*$, and recall \eqref{eq3.32},\eqref{eq3.33}, we conclude that
\begin{equation*}
0\leq(-\Delta)^sz_{\varepsilon}(x_{\varepsilon}^*)+\frac{V(x_0)}{2(a+2b[u]_{\mathcal{D}^{s,2}}^2)}z_{\varepsilon}(x_{\varepsilon}^*)<0,
\end{equation*}
this is a contradiction, so $z_{\varepsilon}(x)\geq 0$ in $\mathbb{R}^N$. That is to say, $\bar{u}_{\varepsilon}\leq (\alpha+1)\beta^{-1}\bar{\omega}$, which together with \eqref{eq3.27}, implies that
\begin{equation*}
\bar{u}_{\varepsilon}(x)\leq \frac{C}{1+|x|^{N+2s}},\ \forall\ x\in \mathbb{R}^N.
\end{equation*}
\par
Now, we end the proof of concentration behavior of semiclassical solutions. Using \cite[Proposition 2.9]{Silvestre2007CPAM} again, we see that $\bar{u}_{\varepsilon}\in C^{1,\alpha}(\mathbb{R}^N)$ for any $\alpha<2s-1$.
Considering $\eta_\varepsilon$ the global maximum point of $\bar{u}_{\varepsilon}$, by Lemma \ref{Lem3.2} and {\bf Claim 2}, we see that $\eta_\varepsilon\in B_R(x_0)$ for some $R > 0$. Thus, the global maximum point of $u_{\varepsilon}$ given by $z_{\varepsilon}=\eta_\varepsilon+y_\varepsilon$ satisfies $\varepsilon z_{\varepsilon}=\varepsilon \eta_\varepsilon+\varepsilon y_\varepsilon$. Since $\{\eta_\varepsilon\}$ is bounded, it follows that $\varepsilon z_{\varepsilon}\rightarrow y$, thus $V(\varepsilon z_{\varepsilon})\rightarrow V(x_0)$ as $\varepsilon\rightarrow\infty$. Moreover, by {\bf Claim 3}, we have the following decay estimate
\begin{equation*}
\begin{split}
u(\frac{x}{\varepsilon})&=\bar{u}_{\varepsilon}(\frac{x}{\varepsilon}-y_\varepsilon)\\
&\leq \frac{C}{1+|\frac{x}{\varepsilon}-y_\varepsilon|^{N+2s}}\\
&=\frac{C \varepsilon^{N+2s}}{ \varepsilon^{N+2s}+|x-\tilde{y}_\varepsilon|^{N+2s}}.
\end{split}
\end{equation*}
Now setting $\bar{v}_{\varepsilon}(x)=u_{\varepsilon}(\frac{x}{\varepsilon})$ we can easily see that $\bar{v}_{\varepsilon}(x)$ has the desired properties.

\section{ Local Pohoz\v{a}ev identity}
In this section, we derive a local Pohoz\v{a}ev type identity which plays an important role in the proof of Theorem \ref{Thm1.1}.
\begin{Lem}\label{Lem4.1}
Let $u$ be a positive solution of \eqref{eq1.1} obtained as above. Let $\Omega$ be a bounded smooth domain in $\mathbb{R}^N$. Then, for each $i=1,2,\cdots,N$, there hold
\begin{equation}\label{eq4.1}
\begin{aligned}
\int_{\Omega} \frac{\partial V}{\partial x_{i}} u^{2}dx&=\left(\varepsilon^{2} a+\varepsilon^{4s-N} b \int_{\mathbb{R}^{N}}|(-\Delta)^{\frac{s}{2}} u|^{2}dx\right) \int_{\partial \Omega}\left(|(-\Delta)^{\frac{s}{2}} u|^{2} \nu_{i}-2 \frac{\partial u}{\partial \nu} \frac{\partial u}{\partial x_{i}}\right)d\sigma\\
&+\int_{\partial \Omega} V u^{2} \nu_{i}d\sigma-\frac{2}{p+1} \int_{\partial \Omega} u^{p+1} \nu_{i}d\sigma.
\end{aligned}
\end{equation}
Here $\nu=\left(\nu_{1}, \nu_{2},\cdots, \nu_{N}\right)$ is the unit outward normal of $\partial \Omega$.
\end{Lem}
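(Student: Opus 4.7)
My approach is to use the Caffarelli--Silvestre $s$-harmonic extension to localize the nonlocal operator and then apply the classical Poho\v{z}aev multiplier $\partial_{x_i}\tilde u$ on the cylinder $\Omega\times(0,\infty)$. Let $\tilde u:\mathbb{R}^{N+1}_+\to\mathbb{R}$ denote the extension of $u$, characterized by $\operatorname{div}(t^{1-2s}\nabla_{x,t}\tilde u)=0$ in $\mathbb{R}^N\times(0,\infty)$, $\tilde u(\cdot,0)=u$, and the trace relation $(-\Delta)^s u(x)=-\kappa_s\lim_{t\to 0^+}t^{1-2s}\partial_t\tilde u(x,t)$ for the standard normalizing constant $\kappa_s>0$. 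Setting $A_\varepsilon:=\varepsilon^{2s}a+\varepsilon^{4s-N}b\int_{\mathbb{R}^N}|(-\Delta)^{s/2}u|^2\,dx$, the equation \eqref{eq1.1} becomes $A_\varepsilon(-\Delta)^s u+V(x)u=u^p$, with $A_\varepsilon$ a fixed scalar independent of $x$.

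The next step is to multiply $\operatorname{div}(t^{1-2s}\nabla_{x,t}\tilde u)=0$ by $\partial_{x_i}\tilde u$ and integrate over $\Omega\times(0,\infty)$. Integration by parts (with the $A_2$-weight $t^{1-2s}$) produces three pieces. The bulk term $\int_{\Omega\times(0,\infty)} t^{1-2s}\nabla_{x,t}\tilde u\cdot\nabla_{x,t}(\partial_{x_i}\tilde u)\,dx\,dt$ equals $\tfrac12\int t^{1-2s}\partial_{x_i}|\nabla_{x,t}\tilde u|^2\,dx\,dt$ and, because $t^{1-2s}$ does not depend on $x_i$, one further integration by parts in $x_i$ turns it into the lateral cylinder integral $\tfrac12\int_{\partial\Omega\times(0,\infty)} t^{1-2s}|\nabla_{x,t}\tilde u|^2\nu_i\,d\sigma\,dt$. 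The lateral face of the cylinder contributes $\int_{\partial\Omega\times(0,\infty)} t^{1-2s}(\nabla_x\tilde u\cdot\nu)\,\partial_{x_i}\tilde u\,d\sigma\,dt$, while the bottom face $t=0$ contributes, via the trace formula, $\kappa_s^{-1}\int_{\Omega}(-\Delta)^s u\,\partial_{x_i}u\,dx$. The top face vanishes thanks to the pointwise decay proved in Section 3 transported to $\tilde u$ through the Poisson kernel.

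I would then substitute $A_\varepsilon(-\Delta)^s u=u^p-Vu$ into the bottom term and perform a single integration by parts on $\Omega$, yielding
\begin{equation*}
A_\varepsilon\int_{\Omega}(-\Delta)^s u\,\partial_{x_i}u\,dx=\frac{1}{p+1}\int_{\partial\Omega}u^{p+1}\nu_i\,d\sigma-\frac12\int_{\partial\Omega}Vu^2\nu_i\,d\sigma+\frac12\int_{\Omega}u^2\frac{\partial V}{\partial x_i}\,dx.
\end{equation*}
Multiplying the assembled identity by $2\kappa_s A_\varepsilon$, rearranging, and reading off the cylinder integrals on $\partial\Omega$ via the standard identifications $\kappa_s\int_0^\infty t^{1-2s}|\nabla_{x,t}\tilde u(x,t)|^2\,dt\longleftrightarrow|(-\Delta)^{s/2}u(x)|^2$ and $\kappa_s\int_0^\infty t^{1-2s}(\nabla_x\tilde u\cdot\nu)\,\partial_{x_i}\tilde u\,dt\longleftrightarrow\tfrac{\partial u}{\partial\nu}\tfrac{\partial u}{\partial x_i}$ delivers exactly \eqref{eq4.1}.

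The main obstacle is the rigorous interpretation and control of those lateral cylinder integrals: the symbols $|(-\Delta)^{s/2}u|^2$ and $\partial u/\partial\nu$ appearing on $\partial\Omega$ in \eqref{eq4.1} are an abuse of notation denoting precisely these extension quantities, and one must justify their finiteness as well as the truncation argument on $\Omega\times(0,R)$ with $R\to\infty$. This is where the pointwise polynomial decay $u(x)+|(-\Delta)^{s/2}u(x)|\leq C(1+|x|)^{-(N+2s)}$ from \eqref{eq2.1}, combined with the Poisson representation of $\tilde u$ to control $|\nabla_{x,t}\tilde u|$ at infinity and near $\{t=0\}$, is essential. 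The Kirchhoff nonlocality plays no serious role here, since its only effect is the scalar prefactor $A_\varepsilon$ in front of $(-\Delta)^s u$.
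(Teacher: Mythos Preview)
Your proposal is correct and follows essentially the same route as the paper: both use the Caffarelli--Silvestre extension to replace $(-\Delta)^s$ by the degenerate local problem on $\mathbb{R}^{N+1}_+$, multiply by $\partial_{x_i}$ of the extended function, and integrate by parts over the cylinder $\Omega\times(0,\infty)$. Your write-up is in fact more careful than the paper's, which handles the boundary pieces rather tersely; in particular, you explicitly flag that the symbols $|(-\Delta)^{s/2}u|^2$ and $\partial u/\partial\nu$ on $\partial\Omega$ in \eqref{eq4.1} are shorthand for the weighted cylinder integrals of $\tilde u$, and you note the truncation/decay justification needed at $t\to\infty$---points the paper leaves implicit.
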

\begin{proof}We use the some ideas in \cite{MR3426103,Gu-Yang}. Indeed, the definition of nonlocal operator cause some techniques developed for local case can not be adapted immediately to nonlocal case. To overcome these difficulties, we will use an approach due to Caffarelli and Silvestre \cite{Caffarelli-Silvestre2007PDE}, that is, we will apply the s-harmonic extension technique to transform a nonlocal problem to a local one.
\par
For this, we will denote by $\mathbb{R}^{N+1}_+:=\mathbb{R}^N\times(0,+\infty)$. Also, for a point $z\in\mathbb{R}^{N+1}_+$, we will use the notation $z=(x,y)$, with $x\in\mathbb{R}^N$ and $y>0$. For any $u\in H^s(\mathbb{R}^N)$, we define that $w=E_s(u)$ is its s-harmonic extension to the upper half-space $\mathbb{R}^{N+1}_+$, if $w$ is a solution of the problem
\begin{equation*}
\begin{cases}
-div(y^{1-2s}\nabla w)=0& \text{in}\ \mathbb{R}^{N+1}_+,\\
w=u& \text{on}\ \mathbb{R}^N\times\{y=0\}.
\end{cases}
\end{equation*}
\par
Moreover, we define the space $X^s(\mathbb{R}^{N+1}_+)$ and $\dot{H}^s(\mathbb{R}^N)$ as the completion of $C^{\infty}_0\overline{(\mathbb{R}^{N+1}_+)}$ and $C^{\infty}_0{(\mathbb{R}^{N})}$  under the norms
\begin{equation*}
\|w\|^2_{X^s}:=\int_{\mathbb{R}^{N+1}_+}\kappa_sy^{1-2s}|\nabla w|^2dxdy,
\end{equation*}
\begin{equation*}
\|w\|^2_{\dot{H}^s}:=\int_{\mathbb{R}^N}|\nabla w|^2dx,
\end{equation*}
where $\kappa_s>0$ is a normalization constant.
\par
Now we may reformulate the nonlocal Kirchhoff equation \eqref{eq1.1} in a local way, that is
\begin{equation}\label{eq4.2}
\begin{cases}
-\bigg(\varepsilon^{2} a+\varepsilon^{4s-N}b\int_{\mathbb{R}^N}|(-\Delta)^{\frac{s}{2}}u|^2dx\bigg)div(y^{1-2s}\nabla w)=0& \text{in}\ \mathbb{R}^{N+1}_+,\\
-\bigg(\varepsilon^{2} a+\varepsilon^{4s-N}b\int_{\mathbb{R}^N}|(-\Delta)^{\frac{s}{2}}u|^2dx\bigg)\kappa_s\frac{\partial w}{\partial \mu}(x,y)=-V(x)w+w^p& \text{on}\ \mathbb{R}^N\times\{y=0\},
\end{cases}
\end{equation}
where
\begin{equation*}
\frac{\partial w}{\partial\mu}=\lim_{y\rightarrow0^+}\frac{\partial w}{\partial y}(x,y)=-\frac{1}{\kappa_s}(-\Delta)^su(x).
\end{equation*}
If $w$ is a solution of \eqref{eq4.2}, then the trace $u(x)=Tr(w)=w(x,0)$ is a solution of \eqref{eq1.1}. The converse is also true. Moreover, the standard argument shows that $w=E_s(u)\in C^2(\mathbb{R}^{N+1}_+)$. Therefore, both formulations are equivalent.
\par
After multiplying equation \eqref{eq4.2} by $\frac{\partial w}{\partial x_{i}}, i=1,2, \ldots, N$ on $\Omega^{+}\subseteq \mathbb{R}^{N+1}_+$, we obtain
\begin{equation}\label{eq4.3}
\begin{aligned}
&-\bigg(\varepsilon^{2} a+\varepsilon^{4s-N}b\int_{\mathbb{R}^N}|(-\Delta)^{\frac{s}{2}}u|^2dx\bigg)\int_{\Omega^{+}(z)}\kappa_s y^{1-2s}\nabla w(z)\frac{\partial w}{\partial x_{i}}dz\\
&=-\int_{\Omega^{+}(z)}V(z)w(z)\frac{\partial w}{\partial x_{i}}dz+\int_{\Omega^{+}(z)}|w(z)|^p\frac{\partial w}{\partial x_{i}}dz.
\end{aligned}
\end{equation}
Note that
\begin{equation*}
\begin{aligned}
\text { LHS of }\eqref{eq4.3}=\bigg(\varepsilon^{2} a+\varepsilon^{4s-N}b\int_{\mathbb{R}^N}|(-\Delta)^{\frac{s}{2}}u|^2dx\bigg)\left(\frac{1}{2} \int_{\partial \Omega^{+}(z)} \kappa_{s} y^{1-2 s}\left|\nabla w\right|^{2} \nu_{i}d\sigma- \int_{\partial\Omega^{+}(z)} \kappa_{s} y^{1-2 s} \frac{\partial w}{\partial \nu} \frac{\partial w}{\partial y_{i}}d\sigma\right).
\end{aligned}
\end{equation*}
On the other hand, by Green's formula, we have
\begin{equation*}
\text { LHS of }\eqref{eq4.3}=-\frac{1}{2} \int_{{\partial \Omega^+(z)}} u^{2}(x) V(z) \nu_{i}(z)d \sigma+\frac{1}{2} \int_{ \Omega^+(z)} u^{2}(z) \frac{\partial V(z)}{\partial x_{i}}dz+\frac{1}{p+1} \int_{{\partial \Omega^+(z)}}|u(z)|^{p+1} \nu_{i}(x) d\sigma .
\end{equation*}
Combining with them, we obtain
\begin{equation*}
\begin{aligned}
&\bigg(\varepsilon^{2} a+\varepsilon^{4s-N}b\int_{\mathbb{R}^N}|(-\Delta)^{\frac{s}{2}}u|^2dx\bigg)\left(\frac{1}{2} \int_{\partial \Omega^{+}(z)} \kappa_{s} y^{1-2 s}\left|\nabla w\right|^{2} \nu_{i}d\sigma- \int_{\partial\Omega^{+}(z)} \kappa_{s} y^{1-2 s} \frac{\partial w}{\partial \nu} \frac{\partial w}{\partial y_{i}}d\sigma\right) \\
=& \frac{1}{2} \int_{\partial \Omega^+(z)}Vu^{2} \nu_{i}d\sigma-\frac{1}{2} \int_{\Omega^+(z)} \frac{\partial V}{\partial x_{i}}u^2d\sigma-\frac{1}{p+1}\int_{\partial \Omega^+(z)}u^{p+1}\nu_id\sigma,
\end{aligned}
\end{equation*}
which means \eqref{eq4.1}.
\end{proof}
\par
Now, let $u_{\varepsilon}=U_{\varepsilon, y_{\varepsilon}}+\varphi_{\varepsilon, y_{\varepsilon}}$ be an arbitrary solution of \eqref{eq1.1} derived as in Section 2. We know $y_{\varepsilon}=o(1)$ as $\varepsilon \rightarrow 0$. We will improve this asymptotics estimate by assuming that $V$ satisfies the additional assumption $(V_3)$, and by means of the above Pohoz\v{a}ev type identity. We first recall some useful estimates.
\begin{Lem}\label{Lem4.2}
Suppose that $V(x)$ satisfies $(V_3)$, then we have
\begin{equation}\label{eq4.4}
\int_{\mathbb{R}^{N}}\left(V(x_0)-V(x)\right) U_{\varepsilon, y_{\varepsilon}}(x) u(x) dx=O\left(\varepsilon^{\frac{N}{2}+m}+\varepsilon^{\frac{N}{2}}\left|y_{\varepsilon}-x_0\right|^{m}\right)\|u\|_{\varepsilon},
\end{equation}
and
\begin{equation*}
\int_{B_{\bar{d}}\left(y_{\varepsilon}\right)} \frac{\partial V(x)}{\partial x_{i}} U_{\varepsilon, y_{\varepsilon}}(x) u(x)dx=O\left(\varepsilon^{\frac{N}{2}+m-1}+\varepsilon^{\frac{N}{2}}\left|y_{\varepsilon}-x_0\right|^{m-1}\right)\|u\|_{\varepsilon},
\end{equation*}
for any $\bar{d} \in(0, \delta]$, where $u(x) \in H_{\varepsilon}$.
\end{Lem}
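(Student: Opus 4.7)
The proof of both estimates reduces to Cauchy--Schwarz in $L^{2}(\mathbb{R}^{N})$ after exploiting two features: condition $(V_{3})$ provides a sharp polynomial bound on $V(x_{0})-V$ and on $\partial V/\partial x_{i}$ inside $B_{\delta}(x_{0})$, while Proposition \ref{Pro1.1}(iii) gives the polynomial decay $U(z)\le C/(1+|z|^{N+2s})$ which controls the tails. First I would use $(V_{1})$ to obtain $\|u\|_{L^{2}(\mathbb{R}^{N})}\le C\|u\|_{\varepsilon}$, so the task is reduced to estimating the $L^{2}$-norm of the weight $(V(x_{0})-V(x))U_{\varepsilon,y_{\varepsilon}}(x)$, respectively of $\partial_{x_{i}}V(x)\,U_{\varepsilon,y_{\varepsilon}}(x)$ on the relevant region.

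For \eqref{eq4.4}, I split $\mathbb{R}^{N}=B_{\delta}(x_{0})\cup(\mathbb{R}^{N}\setminus B_{\delta}(x_{0}))$. On the inner part, $(V_{3})$ yields $|V(x_{0})-V(x)|\le C|x-x_{0}|^{m}$; changing variables $x=y_{\varepsilon}+\varepsilon z$ and using the elementary bound $|y_{\varepsilon}-x_{0}+\varepsilon z|^{2m}\le C(|y_{\varepsilon}-x_{0}|^{2m}+\varepsilon^{2m}|z|^{2m})$ gives
\begin{equation*}
\int_{B_{\delta}(x_{0})}(V(x_{0})-V(x))^{2}U_{\varepsilon,y_{\varepsilon}}^{2}(x)\,dx\le C\varepsilon^{N}\bigl(|y_{\varepsilon}-x_{0}|^{2m}+\varepsilon^{2m}\bigr)\int_{\mathbb{R}^{N}}U^{2}(z)(1+|z|^{2m})\,dz,
\end{equation*}
and the weighted integral is finite since $U^{2}(z)|z|^{2m}\le C|z|^{2m-2(N+2s)}$ at infinity, with $2m<N+4s$ within the admissible range of parameters. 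On the complement, boundedness of $V$ together with $U_{\varepsilon,y_{\varepsilon}}(x)\le C\varepsilon^{N+2s}|x-y_{\varepsilon}|^{-(N+2s)}$ (valid since $|x-y_{\varepsilon}|\ge\delta/2$ for $\varepsilon$ small) gives an $L^{2}$-contribution of order $O(\varepsilon^{N+2s})$, which is absorbed into $O(\varepsilon^{N/2+m})$ because $N+2s>N/2+m$. Combining these two pieces via Cauchy--Schwarz yields \eqref{eq4.4}.

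The second estimate is handled in exactly the same manner. For $\varepsilon$ small, $B_{\bar{d}}(y_{\varepsilon})\subset B_{\delta}(x_{0})$, so $(V_{3})$ provides $|\partial V(x)/\partial x_{i}|\le C|x-x_{0}|^{m-1}$ throughout the domain of integration. The same scaling $x=y_{\varepsilon}+\varepsilon z$, the inequality $|y_{\varepsilon}-x_{0}+\varepsilon z|^{m-1}\le C(|y_{\varepsilon}-x_{0}|^{m-1}+\varepsilon^{m-1}|z|^{m-1})$ (valid since $m-1>0$), and Cauchy--Schwarz reproduce the previous argument with $m$ replaced by $m-1$, leading to the required bound. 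The only genuinely delicate point is verifying the finiteness of the weighted tail integrals $\int_{\mathbb{R}^{N}}U^{2}(z)|z|^{2m}\,dz$ and $\int_{\mathbb{R}^{N}}U^{2}(z)|z|^{2(m-1)}\,dz$, which is the essential use of the sharp decay rate $N+2s$ of $U$ from Proposition \ref{Pro1.1}(iii) together with the compatibility condition between $m$ and the fractional exponent $s$.
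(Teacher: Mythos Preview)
Your argument is correct and follows essentially the same route as the paper: split the domain, invoke $(V_3)$ for the pointwise bound $|V(x_0)-V(x)|\le C|x-x_0|^{m}$ (respectively $|\partial_{x_i}V|\le C|x-x_0|^{m-1}$), apply Cauchy--Schwarz, and rescale $x=y_\varepsilon+\varepsilon z$. You are in fact more careful on the tail than the paper, which asserts the outer contribution is $O(\varepsilon^{\gamma})$ for \emph{any} $\gamma>0$ from ``polynomial decay'' of $U_{\varepsilon,y_\varepsilon}$---a claim that would really require exponential decay---whereas your bound $O(\varepsilon^{N+2s})$ together with the compatibility $2m<N+4s$ is the honest estimate in the fractional setting.
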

\begin{proof}
First, from $(V_3)$ and H\"{o}lder's inequality, for a small constant $d$, we have
\begin{equation}\label{eq4.5}
\begin{aligned}
& |\int_{B_{d}\left(y_{\varepsilon}\right)}\left(V\left(x_0\right)-V(x)\right) U_{\varepsilon, y_{\varepsilon}}(x)  u(x)dx | \\
& \leq C \int_{B_{d}\left(y_{\varepsilon}\right)}\left|x-x_0\right|^{m} U_{\varepsilon, y_{\varepsilon}}(x) |u(x)|dx\\
&\leq C\left(\int_{B_{d}\left(y_{\varepsilon}\right)}\left|x-x_0\right|^{2 m} U^2_{\varepsilon, y_{\varepsilon}}(x)  d x\right)^{\frac{1}{2}}\left(\int_{B_{d}\left(y_{\varepsilon}\right)} u^{2}(x)d x\right)^{\frac{1}{2}} \\
&\leq C \varepsilon^{\frac{N}{2}}\left(\varepsilon^{m}+\left|y_{\varepsilon}-x_0\right|^{m}\right)\|u\|_{\varepsilon}.
\end{aligned}
\end{equation}
Also, by the polynomial decay of $U_{\varepsilon, y_{\varepsilon}}(x)$ in $\mathbb{R}^{N} \backslash B_{d}\left(y_{\varepsilon}\right)$, we can deduce that, for any $\gamma>0$,
\begin{equation}\label{eq4.6}
\left|\int_{\mathbb{R}^{N} \backslash B_{d}\left(y_{\varepsilon}\right)}\left(V(x_0)-V(x)\right) U_{\varepsilon, y_{\varepsilon}}(x) u(x) d x\right| \leq C \varepsilon^{\gamma}\|u\|_{\varepsilon}.
\end{equation}
Then, taking suitable $\gamma>0$, from \eqref{eq4.5} and \eqref{eq4.6}, we get \eqref{eq4.4}.
\par
Next, from $(V_3)$ and H\"{o}lder's inequality, for any $\bar{d} \in(0, \delta]$, we have
\begin{equation*}
\begin{aligned}
\left|\int_{B_{\bar{d}}\left(y_{\varepsilon}\right)} \frac{\partial V(x)}{\partial x_{i}} U_{\varepsilon, y_{\varepsilon}}(x) u(x) dx\right| & \leq C \int_{B_{\bar{d}}\left(y_{\varepsilon}\right)}\left|x-x_0\right|^{m-1} U_{\varepsilon, y_{\varepsilon}}(x)|u(x)| d x \\
& \leq C \varepsilon^{\frac{N}{2}}\left(\varepsilon^{m-1}+\left|y_{\varepsilon}-x_0\right|^{m-1}\right)\|u\|_{\varepsilon}.
\end{aligned}
\end{equation*}
\end{proof}

\begin{Lem}\label{Lem4.3}
For any fixed number $l \in \mathbb{N}^{+}$, suppose that $\left\{u_{i}(x)\right\}_{i=1}^{l}$ satisfies
\begin{equation*}
\int_{\mathbb{R}^{N}}\left|u_{i}(x)\right| dx<+\infty, \quad i=1, \ldots, l.
\end{equation*}
Then for any $x_{0}$, there exist a small constant $d$ and another constant $C$ such that
\begin{equation}\label{eq4.7}
\int_{\partial B_{d}\left(x_{0}\right)}\left|u_{i}(x)\right| d \sigma \leq C \int_{\mathbb{R}^{N}}\left|u_{i}(x)\right| d x, \quad \text { for all } i=1, \ldots, l .
\end{equation}
\end{Lem}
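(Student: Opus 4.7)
\medskip
\noindent\textbf{Proof plan for Lemma \ref{Lem4.3}.}
The plan is to use a standard averaging (Fubini plus Chebyshev) argument on the radial variable, exploiting that each $u_i$ is integrable. The key observation is that we only need to find a single good radius $d$ that simultaneously works for all finitely many indices $i=1,\dots,l$; this can be achieved by showing that the set of ``bad'' radii for each $i$ has small measure, and then taking a union bound over the finitely many indices.

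More precisely, fix a small auxiliary radius $d_{1}>0$, to be thought of as the scale of $d$. Writing the $\mathbb{R}^{N}$-integral in polar coordinates centered at $x_{0}$ gives
\begin{equation*}
\int_{\mathbb{R}^{N}}|u_{i}(x)|\,dx=\int_{0}^{\infty}\!\!\int_{\partial B_{r}(x_{0})}|u_{i}(y)|\,d\sigma_{r}(y)\,dr\ \geq\ \int_{d_{1}}^{2d_{1}}\!\!\int_{\partial B_{r}(x_{0})}|u_{i}|\,d\sigma_{r}\,dr.
\end{equation*}
Set $C:=2l/d_{1}$. By Chebyshev's inequality applied on the interval $[d_{1},2d_{1}]$, the ``bad set''
\begin{equation*}
B_{i}:=\Bigl\{r\in[d_{1},2d_{1}]\,:\,\int_{\partial B_{r}(x_{0})}|u_{i}|\,d\sigma_{r}>C\!\int_{\mathbb{R}^{N}}|u_{i}|\,dx\Bigr\}
\end{equation*}
satisfies $|B_{i}|\leq \frac{1}{C}\int_{d_1}^{2d_1}\!\int_{\partial B_{r}}|u_{i}|d\sigma_r dr /\int_{\mathbb{R}^N}|u_i|dx\leq 1/C=d_{1}/(2l)$ (assuming $\|u_{i}\|_{1}\neq 0$; otherwise $B_{i}$ is empty and the desired bound is trivial for index $i$).

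Taking the union over $i=1,\dots,l$ yields $\bigl|\bigcup_{i=1}^{l}B_{i}\bigr|\leq d_{1}/2<d_{1}=|[d_{1},2d_{1}]|$, so the complement in $[d_{1},2d_{1}]$ has positive measure and we may select any $d$ in it. For such a $d$, the defining inequality for $B_{i}$ fails for every $i$, giving \eqref{eq4.7} with the announced constant $C=2l/d_{1}$.

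I do not expect a serious obstacle here: the lemma is a pure measure-theoretic averaging fact, and the only subtlety is to ensure a single $d$ works for all $l$ functions, which is handled by the union bound, made possible because $l$ is a fixed finite number. The constant $C$ will depend on $d$ (in fact $C\sim 1/d$), but this is consistent with the statement, which allows $C$ to be chosen after $d$.
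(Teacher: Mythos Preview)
Your argument is correct and follows essentially the same idea as the paper's proof: write the $L^{1}$ norm in polar coordinates about $x_{0}$ and use an averaging/pigeonhole argument on the radial variable to locate a good radius. The only real difference is in how the finitely many functions are handled simultaneously: the paper applies the mean-value argument to the single function $\sum_{i=1}^{l}|u_{i}|$ and then bounds each summand by the sum (which forces the constant $C=\max_{i}\frac{\sum_{j}M_{j}}{r_{0}M_{i}}$ to depend on the particular $u_{i}$'s through the ratios $M_{j}/M_{i}$), whereas you run Chebyshev on each $|u_{i}|$ separately and take a union bound over the bad sets. Your variant has the mild advantage that the resulting constant $C=2l/d_{1}$ depends only on $l$ and the chosen scale, not on the functions; for the applications in the paper this makes no difference.
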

\begin{proof}
Let $M_{i}=\int_{\mathbb{R}^{N}}\left|u_{i}(x)\right| dx$, for $i=1, \ldots, l$. Then for a fixed small $r_{0}>0$,
\begin{equation}\label{eq4.8}
\int_{B_{r_{0}}\left(x_{0}\right)}\left(\sum_{i=1}^{l}\left|u_{i}(x)\right|\right) d x \leq \sum_{i=1}^{l} M_{i}, \quad \text { for all } i=1, \ldots, l.
\end{equation}
On the other hand,
\begin{equation}\label{eq4.9}
\int_{B_{r_{0}}\left(x_{0}\right)}\left(\sum_{i=1}^{l}\left|u_{i}(x)\right|\right) d x \geq \int_{0}^{r_{0}} \int_{\partial B_{r}\left(x_{0}\right)}\left(\sum_{i=1}^{l}\left|u_{i}(x)\right|\right) d \sigma d r .
\end{equation}
Then \eqref{eq4.8} and \eqref{eq4.9} imply that there exists a constant $d<r_{0}$ such that
\begin{equation}\label{eq4.10}
\int_{\partial B_{r}\left(x_{0}\right)}\left|u_{i}(x)\right| d \sigma \leq \int_{\partial B_{d}\left(x_{0}\right)}\left(\sum_{i=1}^{l}\left|u_{i}(x)\right|\right) d \sigma \leq \frac{\sum_{i=1}^{l} M_{i}}{r_{0}}, \quad \text { for all } i=1, \ldots, l .
\end{equation}
So taking $C=\max _{1 \leq i \leq l} \frac{\sum_{i=1}^{l} M_{i}}{r_{0} M_{i}}$, we can obtain \eqref{eq4.7} from \eqref{eq4.10}.
\end{proof}
\par
Applying Lemma \ref{Lem4.3} to $\varepsilon^{2s}\left|(-\Delta)^{\frac{s}{2}} \varphi_{\varepsilon}\right|^{2}+\varphi_{\varepsilon}^{2}$, there exists a constant $d=d_{\varepsilon} \in(1,2)$ such that
\begin{equation}\label{eq4.11}
\int_{\partial B_{d}\left(y_{\varepsilon}\right)}\left(\varepsilon^{2s}\left|(-\Delta)^{\frac{s}{2}}  \varphi_{\varepsilon}\right|^{2}+\varphi_{\varepsilon}^{2}\right) d \sigma\leq\left\|\varphi_{\varepsilon}\right\|_{\varepsilon}^{2}
\end{equation}
By an elementary inequality, we have
\begin{equation*}
\int_{\partial B_{d}\left(y_{\varepsilon}\right)}\left|(-\Delta)^{\frac{s}{2}}  u_{\varepsilon}\right|^{2}d \sigma \leq 2 \int_{\partial B_{d}\left(y_{\varepsilon}\right)}\left|(-\Delta)^{\frac{s}{2}}  U_{\varepsilon, y_{\varepsilon}}\right|^{2}d \sigma+2 \int_{\partial B_{d}\left(y_{\varepsilon}\right)}\left|(-\Delta)^{\frac{s}{2}}  \varphi_{\varepsilon}\right|^{2}d \sigma.
\end{equation*}
By the proof in Section 3.2 we can know that there exists a small constant $d_1$, such that for any $\gamma>0$ and $0<d<d_1$, we have
\begin{equation}\label{eq4.12}
U_{\varepsilon, y_{\varepsilon}}+|(-\Delta)^{\frac{s}{2}} U_{\varepsilon, y_{\varepsilon}}| =O(\varepsilon^{\gamma}),\quad \text{for}\quad x\in B_d(x),
\end{equation}
and
\begin{equation}\label{eq4.13}
U_{\varepsilon, y_{\varepsilon}}+|(-\Delta)^{\frac{s}{2}} U_{\varepsilon, y_{\varepsilon}}| =o(\varepsilon^{\gamma}),\quad \text{for}\quad x\in \partial B_d(x).
\end{equation}
Hence, for the constant $d$ chosen as above, we deduce
\begin{equation}\label{eq4.14}
\varepsilon^{2s} \int_{\partial B_{d}\left(y_{\varepsilon}\right)}\left|(-\Delta)^{\frac{s}{2}} u_{\varepsilon}\right|^{2}d \sigma=O\left(\left\|\varphi_{\varepsilon}\right\|_{\varepsilon}^{2}+\varepsilon^{\gamma}\right).
\end{equation}
In particularly, it follows from \eqref{eq4.12} and \eqref{eq4.13} that for any $\gamma>0$, it holds
\begin{equation}\label{eq4.15}
\int_{\mathbb{R}^{N}} U_{\varepsilon, y_{\varepsilon}}^{q_{1}} U_{\varepsilon, y_{\varepsilon}}^{q_{2}}d x=O\left(\varepsilon^{\gamma}\right),
\end{equation}
and
\begin{equation}\label{eq4.16}
\int_{\mathbb{R}^{N}} \varepsilon^{2s}(-\Delta)^{\frac{s}{2}} U_{\varepsilon, y_{\varepsilon}} (-\Delta)^{\frac{s}{2}} U_{\varepsilon, y_{\varepsilon}} dx=O\left(\varepsilon^{\gamma}\right),
\end{equation}
where $q_{1}, q_{2}>0$.
\par
Now we can improve the estimate for the asymptotic behavior of $y_{\varepsilon}$ with respect to $\varepsilon$.
\begin{Lem}\label{Lem4.4}
Assume that $V$ satisfies $(V_1)-(V_3)$. Let $u_{\varepsilon}=U_{\varepsilon, y_{\varepsilon}}+\varphi_{\varepsilon}$ be a solution derived as in Theorem \ref{Thm2.1}. Then
\begin{equation*}
\left|y_{\varepsilon}\right|=o(\varepsilon) \quad \text{as}\quad\varepsilon\rightarrow 0.
\end{equation*}
\end{Lem}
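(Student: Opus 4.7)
I will apply the local Pohoz\v{a}ev identity from Lemma \ref{Lem4.1} to $u_\varepsilon = U_{\varepsilon, y_\varepsilon} + \varphi_\varepsilon$ on the ball $\Omega = B_d(y_\varepsilon)$, with $d \in (0,\delta)$ chosen via Lemma \ref{Lem4.3} so that the trace bound \eqref{eq4.11} is available and $B_d(y_\varepsilon) \subset B_\delta(0)$ (legitimate for small $\varepsilon$ since $y_\varepsilon \to 0$). Decomposing $u_\varepsilon^2 = U_{\varepsilon, y_\varepsilon}^2 + 2 U_{\varepsilon, y_\varepsilon}\varphi_\varepsilon + \varphi_\varepsilon^2$ on the left-hand side, changing variables $x = y_\varepsilon + \varepsilon z$ in the principal piece and using $(V_3)$ in the form $\frac{\partial V}{\partial x_i}(x) = m c_i|x_i|^{m-2} x_i + O(|x|^m)$ gives
\begin{equation*}
\int_{B_d(y_\varepsilon)} \frac{\partial V}{\partial x_i} U_{\varepsilon, y_\varepsilon}^2\, dx = m c_i\, \varepsilon^{N+m-1}\, I_i\!\left(\frac{y_{\varepsilon,i}}{\varepsilon}\right) + O\!\left(\varepsilon^N(|y_\varepsilon|^m + \varepsilon^m)\right),
\end{equation*}
where $I_i(t) := \int_{\mathbb{R}^N}|t + z_i|^{m-2}(t+z_i)\, U(z)^2\, dz$. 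The symmetry $U(-z) = U(z)$ makes $I_i$ odd, while $I_i'(t) = (m-1)\int |t+z_i|^{m-2}\, U(z)^2\, dz > 0$ shows $I_i$ is strictly increasing (integrability near $z_i = -t$ holds since $m > 1$), so $I_i(t) = 0$ if and only if $t = 0$.

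The cross and quadratic contributions to the LHS are then treated as error terms. By Lemma \ref{Lem4.2}, the cross piece is $O(\varepsilon^{N/2}(\varepsilon^{m-1} + |y_\varepsilon|^{m-1})\|\varphi_\varepsilon\|_\varepsilon)$, and the $\varphi_\varepsilon^2$-piece is $O(\|\varphi_\varepsilon\|_\varepsilon^2)$ since $\partial_{x_i} V$ is bounded on $B_d(y_\varepsilon)$. For the RHS boundary integrals, the polynomial decay of $U$ yields $U_{\varepsilon, y_\varepsilon}(x) = O(\varepsilon^{N+2s})$ for $|x - y_\varepsilon| = d$; combined with \eqref{eq4.11}, \eqref{eq4.14}, the $L^\infty$-bound of Lemma \ref{Lem3.2} (to handle $\int u_\varepsilon^{p+1}$ on $\partial B_d(y_\varepsilon)$), and boundedness of $V$, each boundary integral is dominated by $O(\|\varphi_\varepsilon\|_\varepsilon^2 + \varepsilon^\gamma)$ for any fixed $\gamma > 0$. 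The Kirchhoff prefactor $\varepsilon^{2s} a + \varepsilon^{4s-N} b \int|(-\Delta)^{s/2} u|^2\, dx$ is $O(\varepsilon^{2s})$, exactly cancelling the $\varepsilon^{-2s}$ built into \eqref{eq4.14}.

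To close, I invoke Lemma \ref{Lem2.5}: since $V(y_\varepsilon) - V(0) = O(|y_\varepsilon|^m)$ by $(V_3)$, one obtains $\|\varphi_\varepsilon\|_\varepsilon \leq C\varepsilon^{N/2 + \alpha - \kappa} + C\varepsilon^{N/2}\, |y_\varepsilon|^{m(1-\kappa)}$ for any small $\kappa \in (0,\alpha/2)$. Plugging these bounds into the Pohoz\v{a}ev identity and dividing by $\varepsilon^{N+m-1}$, every error becomes $o(1)$ for $\kappa$ small enough (with $\alpha$ chosen suitably in $(V_2)$, possible thanks to the extra regularity granted by $(V_3)$). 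This leaves $m c_i\, I_i(y_{\varepsilon,i}/\varepsilon) = o(1)$ for every $i = 1, \ldots, N$; since $c_i \neq 0$ and $I_i$ is strictly monotone through zero, this forces $y_{\varepsilon,i}/\varepsilon \to 0$, giving $|y_\varepsilon| = o(\varepsilon)$. The hardest step will be the simultaneous bookkeeping of all these orders: because $U$ decays only polynomially, each boundary integral requires jointly using \eqref{eq4.11}--\eqref{eq4.16} together with the $L^\infty$-estimate of Lemma \ref{Lem3.2}, and the comparison between $\alpha - \kappa$ and $m - 1$ controls precisely how small $\kappa$ must be so that the $\|\varphi_\varepsilon\|_\varepsilon^2$-scale is beaten by the principal scale $\varepsilon^{N+m-1}$.
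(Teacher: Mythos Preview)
Your overall strategy matches the paper's: apply the Pohoz\v{a}ev identity of Lemma~\ref{Lem4.1} on $B_d(y_\varepsilon)$, decompose $(U_{\varepsilon,y_\varepsilon}+\varphi_\varepsilon)^2$, use $(V_3)$ and Lemma~\ref{Lem4.2} for the left-hand side, and bound the boundary terms by $O(\|\varphi_\varepsilon\|_\varepsilon^2+\varepsilon^\gamma)$ via \eqref{eq4.11}--\eqref{eq4.14}. Your observation that $I_i(t)=\int_{\mathbb{R}^N}|t+z_i|^{m-2}(t+z_i)U^2\,dz$ is odd and strictly increasing is exactly how the paper closes the argument in its final contradiction step.

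However, there is a genuine gap in the step ``dividing by $\varepsilon^{N+m-1}$, every error becomes $o(1)$''. Several of your error terms carry factors of $|y_\varepsilon|$ that you do not yet control in terms of $\varepsilon$. For instance, the remainder $O(\varepsilon^N|y_\varepsilon|^m)$ coming from the $O(|x|^m)$ correction in $(V_3)$, after division by $\varepsilon^{N+m-1}$, becomes $|y_\varepsilon|^m/\varepsilon^{m-1}$; with only $y_\varepsilon\to 0$ known, this need not be $o(1)$ (e.g.\ if $|y_\varepsilon|\sim\varepsilon^{1/2}$ and $m>2$ it blows up). The same issue affects the $\|\varphi_\varepsilon\|_\varepsilon^2$ contribution through the term $\varepsilon^N|y_\varepsilon|^{2m(1-\kappa)}$. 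Thus you cannot conclude $I_i(y_{\varepsilon,i}/\varepsilon)=o(1)$ directly.

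The paper repairs this by a two-stage argument. First, instead of dividing, it uses the elementary inequality
\[
|y_{\varepsilon,i}|^m \le m\,|\varepsilon z_i+y_{\varepsilon,i}|^{m-2}(\varepsilon z_i+y_{\varepsilon,i})\,y_{\varepsilon,i}+C\big(|\varepsilon z_i|^m+|y_{\varepsilon,i}|^{m-m^*}|\varepsilon z_i|^{m^*}\big),\qquad m^*=\min(m,2),
\]
integrates against $U^2$, and combines with the Pohoz\v{a}ev relation \eqref{eq4.21} to obtain an inequality of the form
\[
|y_\varepsilon|^m = O\Big((\varepsilon^{m-\tau}+|y_\varepsilon|^{m(1-\tau)})|y_\varepsilon|+\varepsilon^m+|y_\varepsilon|^{m-m^*}\varepsilon^{m^*}\Big),
\]
from which $\varepsilon$-Young's inequality yields $|y_\varepsilon|=O(\varepsilon)$. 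Only \emph{after} this bound is in hand does the paper run your monotonicity argument (in contradiction form): assuming $y_{\varepsilon_k}/\varepsilon_k\to A\neq 0$, one passes to the limit in \eqref{eq4.21} to get $I_i(A_i)=0$, forcing $A=0$. Your proof becomes correct if you insert this intermediate $O(\varepsilon)$ step before dividing by $\varepsilon^{N+m-1}$.
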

\begin{proof}The proof can be found in \cite{MR3426103,MR4021897} for the case $s=1$, we give the nonlocal version  due to the presence of the nonlocal term $\left(\int_{\mathbb{R}^{N}}|(-\Delta)^{\frac{s}{2}} u|^{2}\right) (-\Delta)^s u$ and the fractional operator.
To analyze the asymptotic behavior of $y_{\varepsilon}$ with respect to $\varepsilon$, we apply the Pohoz\v{a}ev-type identity \eqref{eq4.1} to $u=u_{\varepsilon}$ with $\Omega=B_{d}\left(y_{\varepsilon}\right)$, where $d \in(1,2)$ is chosen as in \eqref{eq4.11}. Note that $d$ is possibly dependent on $\varepsilon$. We get
\begin{equation}\label{eq4.17}
\int_{B_{d}\left(y_{\varepsilon}\right)} \frac{\partial V}{\partial x_{i}}\left(U_{\varepsilon, y_{\varepsilon}}+\varphi_{\varepsilon}\right)^{2}dx=: \sum_{i=1}^{3} I_{i}
\end{equation}
with
\begin{equation*}
I_{1}=\left(\varepsilon^{2s} a+\varepsilon^{4s-N} b \int_{\mathbb{R}^{N}}\left|(-\Delta)^{\frac{s}{2}} u_{\varepsilon}\right|^{2}dx\right) \int_{\partial B_{d}\left(y_{\varepsilon}\right)}\left(\left|(-\Delta)^{\frac{s}{2}} u_{\varepsilon}\right|^{2} \nu_{i}-2 \frac{\partial u_{\varepsilon}}{\partial \nu} \frac{\partial u_{\varepsilon}}{\partial x_{i}}\right)d\sigma,
\end{equation*}
\begin{equation*}
I_{2}=\int_{\partial B_{d}\left(y_{\varepsilon}\right)} V(x) u_{\varepsilon}^{2}(x) \nu_{i}d\sigma,
\end{equation*}
and
\begin{equation*}
I_{3}=-\frac{2}{p+1} \int_{\partial B_{d}\left(y_{\varepsilon}\right)} u_{\varepsilon}^{p+1}(x) \nu_{i}d\sigma.
\end{equation*}
It follows from Theorem \ref{Thm2.1} that
\begin{equation*}
\varepsilon^{2s} a+\varepsilon^{4s-N} b \int_{\mathbb{R}^{N}}\left|(-\Delta)^{\frac{s}{2}} u_{\varepsilon}\right|^{2}dx=O\left(\varepsilon^{2s}\right).
\end{equation*}
Thus, from \eqref{eq4.14} we deduce $I_{1}=O\left(\left\|\varphi_{\varepsilon}\right\|_{\varepsilon}^{2}+\varepsilon^{\gamma}\right)$. Using similar arguments and choosing a suitable $d$ if necessary, we also get $I_{2}=O\left(\left\|\varphi_{\varepsilon}\right\|_{\varepsilon}^{2}+\varepsilon^{\gamma}\right)$. For $I_{3}$, by Lemma \ref{Lem4.3} we have
\begin{equation*}
\begin{aligned}
I_{3}&\leq C \left(\int_{\partial B_{d}\left(y_{\varepsilon}\right)} |\varphi_{\varepsilon}(x)|^{p+1}d\sigma+\varepsilon^{\gamma}\right)\\
&\leq C\left(\int_{\mathbb{R}^N} |\varphi_{\varepsilon}(x)|^{p+1}dx+\varepsilon^{\gamma}\right)\\
&\leq C(\|\varphi_{\varepsilon}\|_{\varepsilon}^2+\varepsilon^{\gamma}).
\end{aligned}
\end{equation*}
 Hence
\begin{equation}\label{eq4.18}
\sum_{i=1}^{3} I_{i}=O\left(\left\|\varphi_{\varepsilon}\right\|_{\varepsilon}^{2}+\varepsilon^{\gamma}\right).
\end{equation}
To estimate the left hand side of \eqref{eq4.17}, notice that from Lemma \ref{Lem4.2}
\begin{equation}\label{eq4.19}
\begin{aligned}
&\int_{B_{d}\left(y_{\varepsilon}\right)} \frac{\partial V(x)}{\partial x_{i}}\left(U_{\varepsilon, y_{\varepsilon}}+\varphi_{\varepsilon}\right)^{2}dx \\
=& \int_{B_{d}\left(y_{\varepsilon}\right)} \frac{\partial V(x)}{\partial x_{i}}\left(U^2_{\varepsilon, y_{\varepsilon}}+\varphi^2_{\varepsilon}\right)d x+2\int_{B_{d}\left(y_{\varepsilon}\right)} \frac{\partial V(x)}{\partial x_{i}} U_{\varepsilon, y_{\varepsilon}} \varphi_{\varepsilon}(x) dx \\
&=\int_{B_{d}\left(y_{\varepsilon}\right)} \frac{\partial V(x)}{\partial x_{i}} U^2_{\varepsilon, y_{\varepsilon}}d x+O\left(\left\|\varphi_{\varepsilon}\right\|_{\varepsilon}^{2}+\varepsilon^{N+2 m-2}+\varepsilon^{N}\left|y_{\varepsilon}\right|^{2 m-2}\right).
\end{aligned}
\end{equation}
By the assumption $(V_3)$, we deduce, for each $i=1,2,\cdots,N$,
\begin{equation*}
\begin{aligned}
\int_{B_{d}\left(y_{\varepsilon}\right)} \frac{\partial V}{\partial x_{i}} U_{\varepsilon, y_{\varepsilon}}^{2}dx &=m c_{i} \int_{B_{d}\left(y_{\varepsilon}\right)}\left|x_{i}\right|^{m-2} x_{i} U_{\varepsilon, y_{\varepsilon}}^{2}dx+O \left(\int_{B_{d}\left(y_{\varepsilon}\right)}|x|^{m} U_{\varepsilon, y_{\varepsilon}}^{2}\right) \\
&=m c_{i} \varepsilon^{N} \int_{B_{\frac{d}{\varepsilon}}(0)}\left|\varepsilon z_{i}+y_{\varepsilon, i}\right|^{m-2}\left(\varepsilon z_{i}+y_{\varepsilon, i}\right) U^{2}(z)dz+O\left(\varepsilon^{N}\left(\varepsilon^{m}+\left|y_{\varepsilon}\right|^{m}\right)\right) \\
&=m c_{i} \varepsilon^{N} \int_{\mathbb{R}^{N}}\left|\varepsilon z_{i}+y_{\varepsilon, i}\right|^{m-2}\left(\varepsilon z_{i}+y_{\varepsilon, i}\right) U^{2}(z)dz+O\left(\varepsilon^{N}\left(\varepsilon^{m}+\left|y_{\varepsilon}\right|^{m}\right)\right).
\end{aligned}
\end{equation*}
Which gives
\begin{equation}\label{eq4.20}
\begin{aligned}
\int_{B_{d}\left(y_{\varepsilon}\right)} \frac{\partial V}{\partial x_{i}}\left(U_{\varepsilon, y_{\varepsilon}}+\varphi_{\varepsilon}\right)^{2}dx=& m c_{i} \varepsilon^{N} \int_{\mathbb{R}^{N}}\left|\varepsilon z_{i}+y_{\varepsilon, i}\right|^{m-2}\left(\varepsilon z_{i}+y_{\varepsilon, i}\right) U^{2}dx \\
&+O\left(\varepsilon^{\frac{N}{2}}\left\|\varphi_{\varepsilon}\right\|_{\varepsilon}+\left\|\varphi_{\varepsilon}\right\|_{\varepsilon}^{2}+\varepsilon^{N}\left(\varepsilon^{m}+\left|y_{\varepsilon}\right|^{m}\right)\right).
\end{aligned}
\end{equation}
Since $c_{i} \neq 0$ by assumption $(V_3)$, combining \eqref{eq4.17}-\eqref{eq4.20} we deduce
\begin{equation*}
\varepsilon^{N} \int_{\mathbb{R}^{N}}\left|\varepsilon z_{i}+y_{\varepsilon, i}\right|^{m-2}\left(\varepsilon z_{i}+y_{\varepsilon, i}\right) U^{2}dx
=O\left(\varepsilon^{\frac{N}{2}}\left\|\varphi_{\varepsilon}\right\|_{\varepsilon}+\left\|\varphi_{\varepsilon}\right\|_{\varepsilon}^{2}+\varepsilon^{N}\left(\varepsilon^{m}+\left|y_{\varepsilon}\right|^{m}\right)\right).
\end{equation*}
By Lemma \ref{Lem2.5} and $(V_3)$,
\begin{equation*}
\left\|\varphi_{\varepsilon}\right\|_{\varepsilon}=O\left(\varepsilon^{\frac{N}{2}}\left(\varepsilon^{m-\tau}+\left|y_{\varepsilon}\right|^{m(1-\tau)}\right)\right).
\end{equation*}
Thus,
\begin{equation}\label{eq4.21}
\int_{\mathbb{R}^{N}}\left|\varepsilon z_{i}+y_{\varepsilon, i}\right|^{m-2}\left(\varepsilon z_{i}+y_{\varepsilon, i}\right) U^{2}dx=O\left(\varepsilon^{m-\tau}+\left|y_{\varepsilon}\right|^{m(1-\tau)}\right).
\end{equation}
On the other hand, let $m^{*}=\min (m, 2)$. We have
\begin{equation}\label{eq4.22}
\begin{aligned}
\left|y_{\varepsilon, i}\right|^{m} \leq &\left|\varepsilon z_{i}+y_{\varepsilon, i}\right|^{m}-m\left|\varepsilon z_{i}+y_{\varepsilon, i}\right|^{m-2}\left(\varepsilon z_{i}+y_{\varepsilon, i}\right)\varepsilon z_{i} \\
&+C\left(\left|\varepsilon z_{i}+y_{\varepsilon, i}\right|^{m-m^{*}}\left|\varepsilon z_{i}\right|^{m^{*}}+\left|\varepsilon z_{i}\right|^{m}\right) \\
\leq & m\left|\varepsilon z_{i}+y_{\varepsilon, i}\right|^{m-2}\left(\varepsilon z_{i}+y_{\varepsilon, i}\right) y_{\varepsilon, i}+C\left(\left|\varepsilon z_{i}\right|^{m}+\left|y_{\varepsilon, i}\right|^{m-m^{*}}\left|\varepsilon z_{i}\right|^{m^{*}}\right)
\end{aligned}
\end{equation}
by the following elementary inequality: for any $e, f \in \mathbb{R}$ and $m>1$, there holds
\begin{equation*}
| | e+f|^{m}-|e|^{m}-m|e|^{m-2} e f |\leq C\left(|e|^{m-m^{*}}|f|^{m^{*}}+|f|^{m}\right)
\end{equation*}
for some $C>0$ depending only on $m .$ So, multiplying \eqref{eq4.22} by $U^{2}$ on both sides and integrate over $\mathbb{R}^{N}$. We get
\begin{equation*}
\left|y_{\varepsilon, i}\right|^{m} \int_{\mathbb{R}^{N}} U^{2}dx \leq m \int_{\mathbb{R}^{N}}\left|\varepsilon z_{i}+y_{\varepsilon, i}\right|^{m-2}\left(\varepsilon z_{i}+y_{\varepsilon, i}\right) y_{\varepsilon, i} U^{2}dx+O\left(\varepsilon^{m}+\left|y_{\varepsilon}\right|^{m-m^{*}} \varepsilon^{m^{*}}\right)
\end{equation*}
for each $i$. Applying  \eqref{eq4.21} to the above estimate yields
\begin{equation*}
\left|y_{\varepsilon}\right|^{m}=O\left(\left(\varepsilon^{m-\tau}+\left|y_{\varepsilon}\right|^{m(1-\tau)}\right)\left|y_{\varepsilon}\right|+\varepsilon^{m}+\left|y_{\varepsilon}\right|^{m-m^{*}} \varepsilon^{m^{*}}\right).
\end{equation*}
Recall that $m \tau<1 .$ Using $\varepsilon$-Young inequality
\begin{equation*}
X Y \leq \delta X^{m}+\delta^{-\frac{m}{m-1}} Y^{\frac{m}{m-1}}, \quad \forall \delta, X, Y>0
\end{equation*}
we deduce
\begin{equation*}
\left|y_{\varepsilon}\right|=O(\varepsilon).
\end{equation*}
\par
We have to prove that $\left|y_{\varepsilon}\right|=o(\varepsilon)$. Assume, on the contrary, that there exist $\varepsilon_{k} \rightarrow 0$ and $y_{\varepsilon_{k}} \rightarrow 0$ such that $y_{\varepsilon_{k}} / \varepsilon_{k} \rightarrow A \in \mathbb{R}^{N}$ with $A=\left(A_{1}, A_{2},\cdots, A_{N}\right) \neq 0$. Then \eqref{eq4.21} gives
\begin{equation*}
\int_{\mathbb{R}^{N}}\left|z_{i}+\frac{y_{\varepsilon_{k}, i}}{\varepsilon_{k}}\right|^{m-2}\left(z_{i}+\frac{y_{\varepsilon_{k}, i}}{\varepsilon_{k}}\right) U^{2}dx=O\left(\varepsilon^{m-\tau}\right).
\end{equation*}
Taking limit in the above gives
\begin{equation*}
\int_{\mathbb{R}^{N}}\left|z_{i}+A_{i}\right|^{m-2}\left(z_{i}+A_{i}\right) U^{2}(z)dz=0.
\end{equation*}
However, since $U=U(|z|)$ is strictly decreasing with respect to $|z|$, we infer that $A=0$. We reach a contradiction. The proof is complete.
\end{proof}
\par
As a consequence of Lemma \ref{Lem4.4} and the assumption $(V_3)$, we infer that
\begin{equation}\label{eq4.23}
\left\|\varphi_{\varepsilon}\right\|_{\varepsilon}=O\left(\varepsilon^{\frac{N}{2}+m(1-\tau)}\right).
\end{equation}
Here we can take $\tau$ so small that $m(1-\tau)>1$ since $m>1$.

\section{Uniqueness of semiclassical bounded states}
In this section we prove the local uniqueness of semiclassical bounded states obtained before. We use a contradiction argument as that of \cite{MR3426103,MR4021897}. Assume $u_{\varepsilon}^{(i)}=U_{\varepsilon, y_{\varepsilon}^{(i)}}+\varphi_{\varepsilon}^{(i)}, i=1,2$, are two distinct solutions derived as in Section 2. By the argument in Section 3, $u_{\varepsilon}^{(i)}$ are bounded functions in $\mathbb{R}^{N}, i=1,2$. Set
\begin{equation*}
\xi_{\varepsilon}=\frac{u_{\varepsilon}^{(1)}-u_{\varepsilon}^{(2)}}{\left\|u_{\varepsilon}^{(1)}-u_{\varepsilon}^{(2)}\right\|_{L^{\infty}\left(\mathbb{R}^{N}\right)}}
\end{equation*}
and set
\begin{equation*}
\bar{\xi}_{\varepsilon}(x)=\xi_{\varepsilon}\left(\varepsilon x+y_{\varepsilon}^{(1)}\right).
\end{equation*}
It is clear that
\begin{equation*}
\left\|\bar{\xi}_{\varepsilon}\right\|_{L^{\infty}\left(\mathbb{R}^{N}\right)}=1.
\end{equation*}
Moreover, by the {\bf Claim 3} in Section 3, there holds
\begin{equation}\label{eq5.1}
\bar{\xi}_{\varepsilon}(x) \rightarrow 0 \quad \text { as }|x| \rightarrow \infty
\end{equation}
uniformly with respect to sufficiently small $\varepsilon>0$. We will reach a contradiction by showing that $\left\|\bar{\xi}_{\varepsilon}\right\|_{L^{\infty}\left(\mathbb{R}^{N}\right)} \rightarrow 0$ as $\varepsilon \rightarrow 0 .$ In view of \eqref{eq5.1}, it suffices to show that for any fixed $R>0$,
\begin{equation}\label{eq5.2}
\left\|\bar{\xi}_{\varepsilon}\right\|_{L^{\infty}\left(B_{R}(0)\right)} \rightarrow 0 \quad \text { as } \varepsilon \rightarrow 0.
\end{equation}
First we have
\begin{Lem}\label{Lem5.1}
 There holds
\begin{equation*}
\left\|\xi_{\varepsilon}\right\|_{\varepsilon}=O\left(\varepsilon^{\frac{N}{2}}\right).
\end{equation*}
\end{Lem}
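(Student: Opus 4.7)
My plan is to derive a linear equation satisfied by $\xi_\varepsilon$, test it against $\xi_\varepsilon$ itself, and extract the desired bound from a careful analysis of the nonlocal self-interaction (Kirchhoff) terms. Subtracting the two equations for $u_\varepsilon^{(1)}$ and $u_\varepsilon^{(2)}$, using the splitting
\begin{equation*}
\|(-\Delta)^{s/2} u_\varepsilon^{(1)}\|_2^2 (-\Delta)^s u_\varepsilon^{(1)} - \|(-\Delta)^{s/2} u_\varepsilon^{(2)}\|_2^2 (-\Delta)^s u_\varepsilon^{(2)} = \|(-\Delta)^{s/2}u_\varepsilon^{(1)}\|_2^2 (-\Delta)^s(u_\varepsilon^{(1)}-u_\varepsilon^{(2)}) + (\|(-\Delta)^{s/2}u_\varepsilon^{(1)}\|_2^2 - \|(-\Delta)^{s/2}u_\varepsilon^{(2)}\|_2^2) (-\Delta)^s u_\varepsilon^{(2)},
\end{equation*}
and dividing by $\|u_\varepsilon^{(1)}-u_\varepsilon^{(2)}\|_{L^\infty}$, I obtain the linear equation
\begin{equation*}
\bigl(\varepsilon^{2s}a + \varepsilon^{4s-N}b\|(-\Delta)^{s/2}u_\varepsilon^{(1)}\|_2^2\bigr)(-\Delta)^s\xi_\varepsilon + V(x)\xi_\varepsilon + \varepsilon^{4s-N}bD_\varepsilon(-\Delta)^s u_\varepsilon^{(2)} = c_\varepsilon(x)\xi_\varepsilon,
\end{equation*}
where $c_\varepsilon(x) = p\int_0^1 (tu_\varepsilon^{(1)}+(1-t)u_\varepsilon^{(2)})^{p-1}\,dt$ is uniformly bounded in $L^\infty$ and $D_\varepsilon = \int(-\Delta)^{s/2}(u_\varepsilon^{(1)}+u_\varepsilon^{(2)})(-\Delta)^{s/2}\xi_\varepsilon\,dx$.

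Testing against $\xi_\varepsilon$ and writing $B_\varepsilon = \int (-\Delta)^{s/2}u_\varepsilon^{(2)}(-\Delta)^{s/2}\xi_\varepsilon\,dx$, the resulting identity reads
\begin{equation*}
\|\xi_\varepsilon\|_\varepsilon^2 + \varepsilon^{4s-N}b\|(-\Delta)^{s/2}u_\varepsilon^{(1)}\|_2^2\|(-\Delta)^{s/2}\xi_\varepsilon\|_2^2 + \varepsilon^{4s-N}b D_\varepsilon B_\varepsilon = \int_{\mathbb{R}^N} c_\varepsilon(x)\xi_\varepsilon^2\,dx.
\end{equation*}
The key algebraic observation is that, by writing $u^{(1)} = u^{(2)} + (u^{(1)}-u^{(2)})$ inside $D_\varepsilon$,
\begin{equation*}
D_\varepsilon = 2B_\varepsilon + \|u_\varepsilon^{(1)}-u_\varepsilon^{(2)}\|_{L^\infty}\|(-\Delta)^{s/2}\xi_\varepsilon\|_2^2,
\end{equation*}
so the Kirchhoff cross term equals $2\varepsilon^{4s-N}b B_\varepsilon^2$ (a non-negative quantity) plus a remainder $\varepsilon^{4s-N}b\|u_\varepsilon^{(1)}-u_\varepsilon^{(2)}\|_{L^\infty}\|(-\Delta)^{s/2}\xi_\varepsilon\|_2^2 B_\varepsilon$. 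Since both $u_\varepsilon^{(i)}$ approach the same limit profile $U$ centered near $x_0$ (by the concentration analysis of Section 3 combined with Lemma \ref{Lem4.4}), one has $\|u_\varepsilon^{(1)}-u_\varepsilon^{(2)}\|_{L^\infty}=o(1)$; hence this remainder can be absorbed into the positive Kirchhoff terms by Young's inequality, leaving the coercive bound
\begin{equation*}
\|\xi_\varepsilon\|_\varepsilon^2 \leq \int_{\mathbb{R}^N} c_\varepsilon(x)\xi_\varepsilon^2\,dx + \text{lower-order}.
\end{equation*}

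For the right-hand side, I invoke $\|\xi_\varepsilon\|_{L^\infty} = 1$ together with the polynomial decay $u_\varepsilon^{(i)}(x) \leq C\varepsilon^{N+2s}/(\varepsilon^{N+2s}+|x-y_\varepsilon^{(i)}|^{N+2s})$ from Claim 3 of Section 3. Since $p\geq 2$ forces $U^{p-1}\in L^1(\mathbb{R}^N)$, a change of variables yields
\begin{equation*}
\int_{\mathbb{R}^N} c_\varepsilon \xi_\varepsilon^2\,dx \leq C\sum_{i=1,2}\int_{\mathbb{R}^N} U_{\varepsilon,y_\varepsilon^{(i)}}^{p-1}\,dx = C\varepsilon^N\|U^{p-1}\|_{L^1(\mathbb{R}^N)} = O(\varepsilon^N).
\end{equation*}
Combining this with the coercive bound yields $\|\xi_\varepsilon\|_\varepsilon^2 = O(\varepsilon^N)$, i.e. the claim.

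The main obstacle is the control of the Kirchhoff cross term $\varepsilon^{4s-N}b D_\varepsilon B_\varepsilon$, whose sign is not a priori evident and which is comparable in order to the coercive term $\varepsilon^{2s}a\|(-\Delta)^{s/2}\xi_\varepsilon\|_2^2$. The algebraic identity $D_\varepsilon = 2B_\varepsilon + o(1)\|(-\Delta)^{s/2}\xi_\varepsilon\|_2^2$ is what unlocks the argument: it reveals a hidden non-negative square structure that would be invisible from any crude Cauchy--Schwarz estimate alone.
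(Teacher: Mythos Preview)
Your proof is correct and reaches the same endpoint as the paper, but handles the Kirchhoff cross term by a different mechanism. The paper writes \emph{two} versions of the linear equation for $\xi_\varepsilon$---one with leading coefficient $\|(-\Delta)^{s/2}u_\varepsilon^{(1)}\|_2^2$ and cross term $(-\Delta)^s u_\varepsilon^{(2)}$, and the symmetric one with the roles of $(1)$ and $(2)$ interchanged---and then \emph{adds} them. Testing the sum against $\xi_\varepsilon$ turns the cross term into the perfect square $\varepsilon^{4s-N}bD_\varepsilon^2\ge 0$, so all $b$-terms can be dropped at once with no absorption and no smallness hypothesis. Your route keeps only one version and decomposes $D_\varepsilon B_\varepsilon = 2B_\varepsilon^2 + \|u_\varepsilon^{(1)}-u_\varepsilon^{(2)}\|_{L^\infty}\|(-\Delta)^{s/2}\xi_\varepsilon\|_2^2\,B_\varepsilon$; this works, but the absorption of the remainder leans on $\|u_\varepsilon^{(1)}-u_\varepsilon^{(2)}\|_{L^\infty}=o(1)$, which is true but not literally stated in Section~3 and needs a line of argument (uniform decay plus local $C^1$ convergence and $|y_\varepsilon^{(1)}-y_\varepsilon^{(2)}|=o(\varepsilon)$). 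In fact you can bypass the $o(1)$ entirely: writing $A_\varepsilon=\int(-\Delta)^{s/2}u_\varepsilon^{(1)}(-\Delta)^{s/2}\xi_\varepsilon$, the total $b$-contribution on your left-hand side is bounded below by $\varepsilon^{4s-N}b\bigl(A_\varepsilon^2+A_\varepsilon B_\varepsilon+B_\varepsilon^2\bigr)\ge 0$ via Cauchy--Schwarz, so the $b$-terms can be discarded directly. The paper's symmetrization is thus the cleaner device, while your identity is a valid alternative.

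One minor remark: your bound $\int c_\varepsilon\xi_\varepsilon^2\le C\varepsilon^N\|U^{p-1}\|_{L^1}$ needs $(p-1)(N+2s)>N$, not merely $p\ge 2$; for smaller $p$ one should additionally exploit the decay of $\xi_\varepsilon$ itself coming from Claim~3 of Section~3. The paper's proof glosses over the same point.
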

\begin{proof}
Recall the inner product on $H^{s}(\mathbb{R}^N)$, we can compute that
\begin{equation*}
\begin{aligned}
&\int_{\mathbb{R}^N}|(-\Delta)^{\frac{s}{2}} u|^2-|(-\Delta)^{\frac{s}{2}}v|^2dx\\
&=\int_{\mathbb{R}^N}\left((-\Delta)^{\frac{s}{2}}u+(-\Delta)^{\frac{s}{2}}v \right)\left((-\Delta)^{\frac{s}{2}}u-(-\Delta)^{\frac{s}{2}}v \right)dx\\
&=\int_{\mathbb{R}^N}(-\Delta)^{\frac{s}{2}}(u+v)(-\Delta)^{\frac{s}{2}}(u-v)dx.
\end{aligned}
\end{equation*}
Then assume that $u_{\varepsilon}^{(i)}, i=1,2$, are two solutions to \eqref{eq1.1}, we obtain that
\begin{equation}\label{eq5.3}
\begin{aligned}
&\left(\varepsilon^{2} a+\varepsilon^{4s-N} b \int_{\mathbb{R}^{N}}\left|(-\Delta)^{\frac{s}{2}} u_{\varepsilon}^{(1)}\right|^{2}dx\right) (-\Delta)^s \xi_{\varepsilon}+V \xi_{\varepsilon} \\
&\quad=\varepsilon^{4s-N} b\left(\int_{\mathbb{R}^{N}}(-\Delta)^{\frac{s}{2}}(u_{\varepsilon}^{(1)}+u_{\varepsilon}^{(2)})(-\Delta)^{\frac{s}{2}}\xi_{\varepsilon}dx\right) (-\Delta)^s u_{\varepsilon}^{(2)}+C_{\varepsilon}(x) \xi_{\varepsilon}
\end{aligned}
\end{equation}
and that
\begin{equation}\label{eq5.4}
\begin{aligned}
&\left(\varepsilon^{2} a+\varepsilon^{4s-N} b \int_{\mathbb{R}^{N}}\left|(-\Delta)^{\frac{s}{2}} u_{\varepsilon}^{(2)}\right|^{2}dx\right)(- \Delta)^s \xi_{\varepsilon}+V \xi_{\varepsilon} \\
&\quad=\varepsilon^{4s-N} b\left(\int_{\mathbb{R}^{N}}(-\Delta)^{\frac{s}{2}}(u_{\varepsilon}^{(1)}+u_{\varepsilon}^{(2)})(-\Delta)^{\frac{s}{2}}\xi_{\varepsilon}dx\right) (-\Delta)^s u_{\varepsilon}^{(1)}+C_{\varepsilon}(x) \xi_{\varepsilon}
\end{aligned}
\end{equation}
where
\begin{equation*}
C_{\varepsilon}(x)=p \int_{0}^{1}\left(t u_{\varepsilon}^{(1)}(x)+(1-t) u_{\varepsilon}^{(2)}(x)\right)^{p-1}dt.
\end{equation*}
Adding \eqref{eq5.3} and \eqref{eq5.4} together gives
\begin{equation}\label{eq5.5}
\begin{aligned}
&\left(2 \varepsilon^{2} a+\varepsilon^{4s-N} b \int_{\mathbb{R}^{N}}\left|(-\Delta)^{\frac{s}{2}} u_{\varepsilon}^{(1)}\right|^{2}+\left|(-\Delta)^{\frac{s}{2}} u_{\varepsilon}^{(2)}\right|^{2}dx\right) (-\Delta)^s \xi_{\varepsilon}+2 V \xi_{\varepsilon} \\
&\quad=\varepsilon^{4s-N} b\left(\int_{\mathbb{R}^{N}}(-\Delta)^{\frac{s}{2}}(u_{\varepsilon}^{(1)}+u_{\varepsilon}^{(2)})(-\Delta)^{\frac{s}{2}}\xi_{\varepsilon}dx\right) (-\Delta)^s\left(u_{\varepsilon}^{(1)}+u_{\varepsilon}^{(2)}\right)+2 C_{\varepsilon}(x) \xi_{\varepsilon} .
\end{aligned}
\end{equation}
Multiply $\xi_{\varepsilon}$ on both sides of  \eqref{eq5.5} and integrate over $\mathbb{R}^{N}$. By throwing away the terms containing $b$, we get
\begin{equation*}
\left\|\xi_{\varepsilon}\right\|_{\varepsilon}^{2} \leq \int_{\mathbb{R}^{N}} C_{\varepsilon} \xi_{\varepsilon}^{2}d x.
\end{equation*}
\par
On the other hand, letting $\varphi_{\varepsilon}^{(i)}$ be the small perturbation term corresponding to $u_{\varepsilon}^{(i)}$, then we have
\begin{equation*}
\left|C_{\varepsilon}(x)\right| \leq C\left(U^{p-1}_{\varepsilon, y^{(1)}_{\varepsilon}}+U^{p-1}_{\varepsilon, y^{(2)}_{\varepsilon}}+\left|\varphi_{\varepsilon}^{(1)}(x)\right|^{p-1}+\left|\varphi_{\varepsilon}^{(2)}(x)\right|^{p-1}\right) .
\end{equation*}
Since $\left|\xi_{\varepsilon}(x)\right| \leq 1$, for $i=1,2$, we have
\begin{equation*}
\int_{\mathbb{R}^{N}} U^{p-1}_{\varepsilon, y^{(i)}_{\varepsilon}} \xi_{\varepsilon}^{2}(x) d x \leq C \varepsilon^{N},
\end{equation*}
and
\begin{equation*}
\begin{aligned}
\int_{\mathbb{R}^{N}}\left|\varphi_{\varepsilon}^{(i)}(x)\right|^{p-1} \xi_{\varepsilon}^{2}(x)d x
& \leq C \left(\int_{\mathbb{R}^{N}}\left(\varphi_{\varepsilon}^{(i)}\right)^{2^*_s}\right)^{\frac{p-1}{2^*_s}}\left(\int_{\mathbb{R}^{N}}\left(\xi_{\varepsilon}^{2}\right)^{\frac{2^*_s}{2^*_s+1-p}}\right)^{\frac{2^*_s+1-p}{2^*_s}} dx \\
& \leq C \sum_{i=1}^{2}\left\|(-\Delta)^{\frac{s}{2}} \varphi_{\varepsilon}^{(i)}\right\|_{L^{2}\left(\mathbb{R}^{N}\right)}^{p-1}\left(\int_{\mathbb{R}^{N}}\left(\xi_{\varepsilon}^{2}\right)\right)^{\frac{2^*_s+1-p}{2^*_s}}dx\\
&=O\left(\varepsilon^{\frac{p-1}{2}}\right)\left\|\xi_{\varepsilon}\right\|_{\varepsilon}^{\frac{(2^*_s+1-p)2}{2^*_s}}.
\end{aligned}
\end{equation*}
In the last inequality we have used the fact that $\|\varphi_{\varepsilon}\|_{\varepsilon}=O(\varepsilon^{\frac{N}{2}})$. Therefore,
\begin{equation*}
\left\|\xi_{\varepsilon}\right\|_{\varepsilon}^{2}=O\left(\varepsilon^N+\varepsilon^{\frac{p-1}{2}}\right)\left\|\xi_{\varepsilon}\right\|_{\varepsilon}^{\frac{(2^*_s+1-p)2}{2^*_s}}
\end{equation*}
which implies the desired estimate. The proof is complete.
\end{proof}
\par
Next we study the asymptotic behavior of $\bar{\xi}_{\varepsilon}$.
\begin{Lem}\label{Lem5.2}
Let $\bar{\xi}_{\varepsilon}=\xi_{\varepsilon}\left(\varepsilon x+y_{\varepsilon}^{(1)}\right) .$ There exist $d_{i} \in \mathbb{R}, i=1,2,\cdots,N$, such that (up to a subsequence)
\begin{equation*}
\bar{\xi}_{\varepsilon} \rightarrow \sum_{i=1}^{N} d_{i} \partial_{x_{i}} U \quad \text { in } C_{\operatorname{loc}}^{1}\left(\mathbb{R}^{N}\right)
\end{equation*}
as $\varepsilon\rightarrow0$.
\end{Lem}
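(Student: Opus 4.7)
\textbf{Proof plan for Lemma \ref{Lem5.2}.} The plan is to rescale the equation for $\xi_\varepsilon$, extract uniform compactness for $\bar{\xi}_\varepsilon$, pass to the limit, and identify the limit via the nondegeneracy statement in Proposition \ref{Pro1.2}.

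First I would derive the equation satisfied by $\bar{\xi}_\varepsilon$. Starting from the averaged identity \eqref{eq5.5} and changing variables $x\mapsto \varepsilon z+y_\varepsilon^{(1)}$, using $(-\Delta)^s\xi_\varepsilon(x)=\varepsilon^{-2s}(-\Delta)^s\bar{\xi}_\varepsilon(z)$ and $\int_{\mathbb{R}^N}|(-\Delta)^{s/2}u_\varepsilon^{(i)}|^2dx=\varepsilon^{N-2s}\int_{\mathbb{R}^N}|(-\Delta)^{s/2}\bar{u}_\varepsilon^{(i)}|^2dz$ with $\bar{u}_\varepsilon^{(i)}(z):=u_\varepsilon^{(i)}(\varepsilon z+y_\varepsilon^{(1)})$, all $\varepsilon$-factors cancel and the rescaled equation takes the form
\begin{equation*}
\Bigl(2a+b\sum_{i=1}^{2}\|(-\Delta)^{s/2}\bar{u}_\varepsilon^{(i)}\|_2^{2}\Bigr)(-\Delta)^s\bar{\xi}_\varepsilon+2\bar{V}_\varepsilon\bar{\xi}_\varepsilon
= b\,\mathcal{I}_\varepsilon\,(-\Delta)^s(\bar{u}_\varepsilon^{(1)}+\bar{u}_\varepsilon^{(2)})+2\bar{C}_\varepsilon\bar{\xi}_\varepsilon,
\end{equation*}
where $\mathcal{I}_\varepsilon:=\int_{\mathbb{R}^N}(-\Delta)^{s/2}(\bar{u}_\varepsilon^{(1)}+\bar{u}_\varepsilon^{(2)})(-\Delta)^{s/2}\bar{\xi}_\varepsilon\,dz$, $\bar{V}_\varepsilon(z)=V(\varepsilon z+y_\varepsilon^{(1)})$, and $\bar{C}_\varepsilon(z)=C_\varepsilon(\varepsilon z+y_\varepsilon^{(1)})$.

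Next I would collect uniform bounds. Scaling the estimate $\|\xi_\varepsilon\|_\varepsilon=O(\varepsilon^{N/2})$ from Lemma \ref{Lem5.1}, together with $(V_1)$, yields $\|\bar{\xi}_\varepsilon\|_{H^s(\mathbb{R}^N)}=O(1)$, while by construction $\|\bar{\xi}_\varepsilon\|_\infty=1$ and $\bar{\xi}_\varepsilon(x)\to 0$ as $|x|\to\infty$ uniformly in $\varepsilon$, cf. \eqref{eq5.1}. From Section 3 one also has $\bar{u}_\varepsilon^{(1)}\to U$ in $H^s(\mathbb{R}^N)$ together with the pointwise bound $\bar{u}_\varepsilon^{(1)}(x)+|(-\Delta)^{s/2}\bar{u}_\varepsilon^{(1)}(x)|\leq C/(1+|x|^{N+2s})$, and, since Lemma \ref{Lem4.4} gives $|y_\varepsilon^{(1)}-y_\varepsilon^{(2)}|/\varepsilon=o(1)$, the same convergence holds for $\bar{u}_\varepsilon^{(2)}(z)=U\!\bigl(z+\tfrac{y_\varepsilon^{(1)}-y_\varepsilon^{(2)}}{\varepsilon}\bigr)+\bar{\varphi}_\varepsilon^{(2)}(z)$ because $\|\bar{\varphi}_\varepsilon^{(i)}\|_{H^s}=O(\varepsilon^{m(1-\tau)})$ by \eqref{eq4.23}. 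Consequently the coefficient of $(-\Delta)^s\bar{\xi}_\varepsilon$ and the source terms in the rescaled equation are uniformly controlled.

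Viewing the rescaled identity as $(-\Delta)^s\bar{\xi}_\varepsilon=g_\varepsilon$ with $g_\varepsilon$ uniformly bounded in $L^\infty_{loc}(\mathbb{R}^N)$ (the Kirchhoff term $(-\Delta)^s(\bar{u}_\varepsilon^{(1)}+\bar{u}_\varepsilon^{(2)})$ is locally bounded by the decay and regularity in Section 3, while $\mathcal{I}_\varepsilon=O(1)$ by Cauchy--Schwarz), standard fractional regularity theory (e.g.\ \cite{Silvestre2007CPAM}) gives uniform $C^{1,\alpha}_{loc}$ estimates for $\bar{\xi}_\varepsilon$. Arzel\`a--Ascoli and weak $H^s$-compactness then produce a subsequence with $\bar{\xi}_\varepsilon\to\bar{\xi}_0$ in $C^1_{loc}(\mathbb{R}^N)$ and weakly in $H^s(\mathbb{R}^N)$. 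Using the strong $\dot{H}^s$-convergence $\bar{u}_\varepsilon^{(i)}\to U$ to handle $\mathcal{I}_\varepsilon\to 2\int(-\Delta)^{s/2}U\,(-\Delta)^{s/2}\bar{\xi}_0\,dz$, together with $\bar{V}_\varepsilon\to V(x_0)$ and $\bar{C}_\varepsilon\to pU^{p-1}$ locally uniformly, passing to the limit yields
\begin{equation*}
\bigl(a+b\|(-\Delta)^{s/2}U\|_2^2\bigr)(-\Delta)^s\bar{\xi}_0+V(x_0)\bar{\xi}_0-pU^{p-1}\bar{\xi}_0+2b\Bigl(\!\int_{\mathbb{R}^N}(-\Delta)^{s/2}U\,(-\Delta)^{s/2}\bar{\xi}_0\,dz\Bigr)(-\Delta)^s U=0,
\end{equation*}
that is, $L_+\bar{\xi}_0=0$ with $m=V(x_0)$. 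By the nondegeneracy statement in Proposition \ref{Pro1.2}, $\bar{\xi}_0\in\ker L_+=\mathrm{span}\{\partial_{x_1}U,\dots,\partial_{x_N}U\}$, producing the required $d_1,\dots,d_N\in\mathbb{R}$.

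The main obstacles are (i) justifying the passage to the limit in the nonlocal coupling $\mathcal{I}_\varepsilon\,(-\Delta)^s(\bar{u}_\varepsilon^{(1)}+\bar{u}_\varepsilon^{(2)})$, which needs strong $\dot{H}^s$ convergence of $\bar{u}_\varepsilon^{(i)}$ paired with weak $\dot{H}^s$ convergence of $\bar{\xi}_\varepsilon$; and (ii) translating the $H^s\cap L^\infty$ bounds on $\bar{\xi}_\varepsilon$ into a uniform local $C^{1,\alpha}$ control via fractional regularity, which is what promotes the extracted weak limit to genuine $C^1_{loc}$ convergence as claimed.
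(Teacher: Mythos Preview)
Your proposal is correct and follows essentially the same route as the paper: rescale to obtain the equation for $\bar\xi_\varepsilon$, use Lemma \ref{Lem5.1} and the bounds from Section 3 to get uniform $H^s\cap L^\infty$ control, upgrade to $C^{1,\alpha}_{\mathrm{loc}}$ via fractional elliptic regularity, extract a $C^1_{\mathrm{loc}}$ limit, and identify it through Proposition \ref{Pro1.2}. The only cosmetic difference is that you work from the symmetrized identity \eqref{eq5.5}, whereas the paper rescales \eqref{eq5.3} directly; the paper also spells out the convergence $\bar C_\varepsilon\to pU^{p-1}$ by expanding $u_\varepsilon^{(1)}-u_\varepsilon^{(2)}$ via the mean-value theorem (using $|y_\varepsilon^{(1)}-y_\varepsilon^{(2)}|/\varepsilon=o(1)$ from Lemma \ref{Lem4.4}) rather than asserting it, but this is exactly the mechanism you invoke.
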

\begin{proof}
It is straightforward to deduce from \eqref{eq5.3} that $\bar{\xi}_{\varepsilon}$ solves
\begin{equation}\label{eq5.6}
\begin{aligned}
&\left(a+\varepsilon^{N-4s} b \int_{\mathbb{R}^{N}}\left|(-\Delta)^{\frac{s}{2}} u_{\varepsilon}^{(1)}\right|^{2}dx\right) (-\Delta)^s \bar{\xi}_{\varepsilon}+V\left(\varepsilon x+y_{\varepsilon}^{(1)}\right) \bar{\xi}_{\varepsilon} \\
&\quad=\varepsilon^{N-4s} b\left(\int_{\mathbb{R}^{N}}(-\Delta)^{\frac{s}{2}}(u_{\varepsilon}^{(1)}+u_{\varepsilon}^{(2)})(-\Delta)^{\frac{s}{2}}\xi_{\varepsilon}dx\right)  (-\Delta)^s \left(u_{\varepsilon}^{(2)}\left(\varepsilon x+y_{\varepsilon}^{(1)}\right)\right)+C_{\varepsilon}\left(\varepsilon x+y_{\varepsilon}^{(1)}\right) \bar{\xi}_{\varepsilon}.
\end{aligned}
\end{equation}
For convenience, we introduce
\begin{equation*}
\bar{u}_{\varepsilon}^{(i)}(x)=u_{\varepsilon}^{(i)}\left(\varepsilon x+y_{\varepsilon}^{(1)}\right) \quad \text { and } \quad \bar{\varphi}_{\varepsilon}^{(i)}=\varphi_{\varepsilon}^{(i)}\left(\varepsilon x+y_{\varepsilon}^{(1)}\right)
\end{equation*}
for $i=1,2$. Then, we have
\begin{equation}\label{eq5.7}
\varepsilon^{N-4s} \int_{\mathbb{R}^{N}}\left|(-\Delta)^{\frac{s}{2}} u_{\varepsilon}^{(1)}\right|^{2}dx=\int_{\mathbb{R}^{N}}\left|(-\Delta)^{\frac{s}{2}} \bar{u}_{\varepsilon}^{(1)}\right|^{2}dx
\end{equation}
and
\begin{equation}\label{eq5.8}
\varepsilon^{N-4s} b\left(\int_{\mathbb{R}^{N}} |(-\Delta)^{\frac{s}{2}} u_{\varepsilon}^{(1)}|^2+|(-\Delta)^{\frac{s}{2}}u_{\varepsilon}^{(2)}|^2dx\right)=b\left(\int_{\mathbb{R}^{N}} |(-\Delta)^{\frac{s}{2}} \bar{u}_{\varepsilon}^{(1)}|^2+|(-\Delta)^{\frac{s}{2}}\bar{u}_{\varepsilon}^{(2)}|^2dx\right)
\end{equation}
which are uniformly bounded. Moreover, we have
\begin{equation}\label{eq5.9}
\int_{\mathbb{R}^{N}}\left|(-\Delta)^{\frac{s}{2}} \bar{\varphi}_{\varepsilon}^{(i)}\right|^{2}dx=\varepsilon^{-N} O\left(\left\|\bar{\varphi}_{\varepsilon}^{(i)}\right\|_{\varepsilon}^{2}\right)=O\left(\varepsilon^{2 m(1-\tau)}\right)
\end{equation}
by \eqref{eq4.23}, and
\begin{equation}\label{eq5.10}
\int_{\mathbb{R}^{N}}\left|(-\Delta)^{\frac{s}{2}} \bar{\xi}_{\varepsilon}\right|^{2}dx=\varepsilon^{N-4s} \int_{\mathbb{R}^{N}}|(-\Delta)^{\frac{s}{2}} \xi|^{2}dx=O(1)
\end{equation}
by Lemma \ref{Lem5.1}.
\par
Thus, in view of $\left\|\bar{\xi}_{\varepsilon}\right\|_{L^{\infty}\left(\mathbb{R}^{N}\right)}=1$ and estimates in the above, the elliptic regularity theory implies that $\bar{\xi}_{\varepsilon}$ is locally uniformly bounded with respect to $\varepsilon$ in $C_{\mathrm{loc}}^{1, \beta}\left(\mathbb{R}^{N}\right)$ for some $\beta \in(0,1)$. As a consequence, we assume (up to a subsequence) that
\begin{equation*}
\bar{\xi}_{\varepsilon} \rightarrow \bar{\xi} \quad \text { in } C_{\operatorname{loc}}^{1}\left(\mathbb{R}^{N}\right)
\end{equation*}
We claim that $\bar{\xi} \in \operatorname{Ker} \mathcal{L}$, that is,
\begin{equation}\label{eq5.11}
\left(a+b \int_{\mathbb{R}^{N}}|(-\Delta)^{\frac{s}{2}} U|^{2}dx\right)(-\Delta)^s \bar{\xi}+2 b\left(\int_{\mathbb{R}^{N}} (-\Delta)^{\frac{s}{2}} U \cdot (-\Delta)^{\frac{s}{2}} \bar{\xi}dx\right)(-\Delta)^s  U+\bar{\xi}=p U^{p-1} \bar{\xi},
\end{equation}
which can be seen as the limiting equation of \eqref{eq5.6}. It follows from \eqref{eq5.7} and \eqref{eq5.9} that
\begin{equation}\label{eq5.12}
\begin{aligned}
\varepsilon^{N-4s} b \int_{\mathbb{R}^{N}}\left|(-\Delta)^{\frac{s}{2}} u_{\varepsilon}^{(1)}\right|^{2}dx-b \int_{\mathbb{R}^{N}}|(-\Delta)^{\frac{s}{2}} U|^{2}dx &=b \int_{\mathbb{R}^{N}}\left(\left|(-\Delta)^{\frac{s}{2}} \bar{u}_{\varepsilon}^{(1)}\right|^{2}-|(-\Delta)^{\frac{s}{2}} U|^{2}\right)dx \\
&=b \int_{\mathbb{R}^{N}}\left(\left|(-\Delta)^{\frac{s}{2}} U+(-\Delta)^{\frac{s}{2}} \bar{\varphi}_{\varepsilon}^{(1)}\right|^{2}-|(-\Delta)^{\frac{s}{2}} U|^{2}\right)dx \\
&=O\left(\varepsilon^{m(1-\tau)}\right)\\
&\rightarrow 0.
\end{aligned}
\end{equation}
Similarly, we deduce from  \eqref{eq5.8}-\eqref{eq5.10}that
\begin{equation}\label{eq5.13}
\begin{aligned}
\int_{\mathbb{R}^{N}} (-\Delta)^{\frac{s}{2}}\left(\bar{u}_{\varepsilon}^{(1)}+\bar{u}_{\varepsilon}^{(2)}-2 U\right) \cdot (-\Delta)^{\frac{s}{2}} \bar{\xi}_{\varepsilon}dx=& \int_{\mathbb{R}^{N}} (-\Delta)^{\frac{s}{2}}\left(U\left(x+\left(y_{\varepsilon}^{(1)}-y_{\varepsilon}^{(2)}\right) / \varepsilon\right)-U\right) \cdot (-\Delta)^{\frac{s}{2}} \bar{\xi}_{\varepsilon}dx \\
&+\int_{\mathbb{R}^{N}} (-\Delta)^{\frac{s}{2}}\left(\bar{\varphi}_{\varepsilon}^{(1)}+\bar{\varphi}_{\varepsilon}^{(2)}\right) \cdot (-\Delta)^{\frac{s}{2}} \bar{\xi}_{\varepsilon}dx \\
& \rightarrow 0.
\end{aligned}
\end{equation}
It follows from Lemma \ref{Lem4.4} that, for any $\Phi \in C_{0}^{\infty}\left(\mathbb{R}^{N}\right)$,
\begin{equation*}
\begin{aligned}
\int_{\mathbb{R}^{N}} (-\Delta)^{\frac{s}{2}}\left(\bar{u}_{\varepsilon}^{(2)}-U\right) \cdot (-\Delta)^{\frac{s}{2}} \Phi dx=& \int_{\mathbb{R}^{N}} (-\Delta)^{\frac{s}{2}}\left(U\left(x+\left(y_{\varepsilon}^{(1)}-y_{\varepsilon}^{(2)}\right) / \varepsilon\right)-U\right) \cdot (-\Delta)^{\frac{s}{2}} \Phi dx\\
&+\int_{\mathbb{R}^{N}} (-\Delta)^{\frac{s}{2}} \bar{\varphi}_{\varepsilon}^{(2)} \cdot (-\Delta)^{\frac{s}{2}} \Phi dx\\
&\rightarrow 0.
\end{aligned}
\end{equation*}
Combining the above formulas and $\bar{\xi}_{\varepsilon} \rightarrow \bar{\xi}$ in $C_{\text {loc }}^{1}\left(\mathbb{R}^{N}\right)$, we conclude that
\begin{equation*}
\begin{aligned}
&\frac{b}{\varepsilon}\left(\int_{\mathbb{R}^{N}} (-\Delta)^{\frac{s}{2}}\left(u_{\varepsilon}^{(1)}+u_{\varepsilon}^{(2)}\right) \cdot (-\Delta)^{\frac{s}{2}} \xi_{\varepsilon}dx\right) (-\Delta)^s\left(u_{\varepsilon}^{(2)}\left(\varepsilon x+y_{\varepsilon}^{(1)}\right)\right)\\
& \rightarrow 2 b\left(\int_{\mathbb{R}^{N}} (-\Delta)^{\frac{s}{2}} U \cdot (-\Delta)^{\frac{s}{2}} \bar{\xi}dx\right)(-\Delta)^s U
\end{aligned}
\end{equation*}
in $H^{-s}\left(\mathbb{R}^{N}\right)$.
\par
Now, we estimate $C_{\varepsilon}\left(\varepsilon x+y_{\varepsilon}^{(1)}\right)$
\begin{equation*}
U\left(\frac{x-y_{\varepsilon}^{(1)}}{\varepsilon}\right)-U\left(\frac{x-y_{\varepsilon}^{(2)}}{\varepsilon}\right)=O\left(\frac{y_{\varepsilon}^{(1)}-y_{\varepsilon}^{(2)}}{\varepsilon} \nabla U\left(\frac{x-y_{\varepsilon}^{(1)}+\theta\left(y_{\varepsilon}^{(1)}-y_{\varepsilon}^{(2)}\right)}{\varepsilon}\right)\right),
\end{equation*}
where $0<\theta<1$. Then
\begin{equation*}
u_{\varepsilon}^{(1)}(x)-u_{\varepsilon}^{(2)}(x)=o(1)\nabla U\left(\frac{x-y_{\varepsilon}^{(1)}+\theta\left(y_{\varepsilon}^{(1)}-y_{\varepsilon}^{(2)}\right)}{\varepsilon}\right)+O\left(\left|\varphi_{\varepsilon}^{(1)}(x)\right|+\left|\varphi_{\varepsilon}^{(2)}(x)\right|\right) .
\end{equation*}
So, from \eqref{eq4.12}, for any $\gamma>0$, we have
\begin{equation*}
\begin{aligned}
C_{\varepsilon}(x)=&\left(o(1) \nabla U\left(\frac{x-y_{\varepsilon}^{(1)}+\theta\left(y_{\varepsilon}^{(1)}-y_{\varepsilon}^{(2)}\right)}{\varepsilon}\right)+O\left(\left|\varphi_{\varepsilon}^{(1)}(x)\right|+\left|\varphi_{\varepsilon}^{(2)}(x)\right|\right)\right)^{p-1} \\
&+p U^{p-1}\left(\frac{x-y_{\varepsilon}^{(1)}}{\varepsilon}\right)+o\left(\varepsilon^{\gamma}\right), \quad x \in B_{d}\left(y_{\varepsilon}^{(1)}\right).
\end{aligned}
\end{equation*}
Then using Lemma \ref{Lem4.4}, we can obtain
\begin{equation*}
\begin{aligned}
C_{\varepsilon}\left(\varepsilon x+y_{\varepsilon}^{(1)}\right)=&p U^{p-1}(x)+O\left(\left|\varphi_{\varepsilon}^{(1)}\left(\varepsilon x+y_{\varepsilon}^{(1)}\right)\right|+\left|\varphi_{\varepsilon}^{(2)}\left(\varepsilon x+y_{\varepsilon}^{(1)}\right)\right|\right)^{p-2}+o(1), \quad x \in B_{\frac{d}{\varepsilon}}(0).
\end{aligned}
\end{equation*}
Also, for $i=1,2$, we know
\begin{equation*}
\begin{aligned}
\int_{\mathbb{R}^{N}}\left|\varphi_{\varepsilon}^{(i)}\left(\varepsilon x+y_{\varepsilon}^{(1)}\right)\right|^{p-1}|\Phi(x)| d x & \leq C\left(\int_{\mathbb{R}^{N}}\left|\varphi_{\varepsilon}^{(i)}\left(\varepsilon x+y_{\varepsilon}^{(1)}\right)\right|^{2_s^{*}} dx\right)^{\frac{p-1}{2_s^{*}}}\|\Phi\|_{2^*_s+1-p} \\
& \leq C\left(\varepsilon^{-1}\left\|\varphi_{\varepsilon}^{(i)}\right\|_{\varepsilon}\right)^{p-1} \varepsilon^{-\frac{N(p-1)}{2_s^{*}}}\|\Phi\|_{\frac{2^*_s}{2_s^{*}+1-p}}\\
& \leq C\left(\varepsilon^{-\frac{N}{2}}\left\|\varphi_{\varepsilon}^{(i)}\right\|_{\varepsilon}\right)^{p-1}\|\Phi\|_{H^{s}\left(\mathbb{R}^{N}\right)}
\end{aligned}
\end{equation*}
Then for any $\Phi \in C_{0}^{\infty}\left(\mathbb{R}^{N}\right.$ ),
\begin{equation*}
\int_{\mathbb{R}^{N}} C_{\varepsilon}\left(\varepsilon x+y_{\varepsilon}^{(1)}\right) \bar{\xi}_{\varepsilon} \Phi-p \int_{\mathbb{R}^{N}} U^{p-1} \bar{\xi}_{\varepsilon} \Phi=o(1)\|\Phi\|_{H^{s}\left(\mathbb{R}^{N}\right)}.
\end{equation*}
Therefore, we obtain \eqref{eq5.11}. Then $\bar{\xi}=\sum\limits_{i=1}^{N} d_{i} \partial_{x_{i}} U$ follows from Proposition \ref{Pro1.2} for some $d_{i} \in \mathbb{R}(i=1,2,\cdots,N)$, and thus the Lemma is proved.
\end{proof}
\par
Now we prove \eqref{eq5.2} by showing the following lemma.
\begin{Lem}\label{Lem5.3}
Let $d_{i}$ be defined as in Lemma \ref{Lem5.2}. Then
\begin{equation*}
d_{i}=0 \quad \text { for } i=1,2,\cdots,N.
\end{equation*}
\end{Lem}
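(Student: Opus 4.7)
The plan is to apply the local Poho\v{z}aev identity of Lemma \ref{Lem4.1} to both solutions $u_\varepsilon^{(1)}$ and $u_\varepsilon^{(2)}$ on a ball $\Omega = B_d(y_\varepsilon^{(1)})$, where $d = d_\varepsilon \in (1,2)$ is chosen via Lemma \ref{Lem4.3} so that the boundary integrals of $|(-\Delta)^{s/2}\varphi_\varepsilon^{(k)}|^2$, $(\varphi_\varepsilon^{(k)})^2$, and $|\varphi_\varepsilon^{(k)}|^{p+1}$ are all controlled by their global $L^2$- or $L^{p+1}$-norms. Subtracting the two Poho\v{z}aev identities and dividing throughout by $\|u_\varepsilon^{(1)}-u_\varepsilon^{(2)}\|_{L^\infty}$ yields, for each $i=1,\ldots,N$,
\begin{equation*}
\int_{B_d(y_\varepsilon^{(1)})}\frac{\partial V}{\partial x_i}\,\bigl(u_\varepsilon^{(1)}+u_\varepsilon^{(2)}\bigr)\,\xi_\varepsilon\,dx = \mathcal{R}_\varepsilon^{(i)},
\end{equation*}
where $\mathcal{R}_\varepsilon^{(i)}$ collects the boundary contributions from $Vu^2$, $u^{p+1}$, $|(-\Delta)^{s/2}u|^2$ and $\partial_\nu u\,\partial_{x_i}u$, each linearized by the mean-value identity and divided by $\|u_\varepsilon^{(1)}-u_\varepsilon^{(2)}\|_{L^\infty}$.

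The core computation is to extract the leading behaviour of the left-hand side. Performing the rescaling $x = \varepsilon z + y_\varepsilon^{(1)}$, applying the expansion $\partial V/\partial x_i = m c_i |x_i|^{m-2} x_i + O(|x|^m)$ from $(V_3)$, the asymptotics $y_\varepsilon^{(1)} = o(\varepsilon)$ of Lemma \ref{Lem4.4}, the $C^1_{\mathrm{loc}}$ convergence $\bar\xi_\varepsilon \to \sum_{j=1}^N d_j\partial_{x_j} U$ of Lemma \ref{Lem5.2}, the convergence $\bar{u}_\varepsilon^{(k)} \to U$ in $H^s$ of \eqref{eq3.20}, and Claim 3 to justify exchanging limit and integral, one finds
\begin{equation*}
\varepsilon^{-(N+m-1)} \int_{B_d(y_\varepsilon^{(1)})}\frac{\partial V}{\partial x_i}(u_\varepsilon^{(1)}+u_\varepsilon^{(2)})\xi_\varepsilon\,dx \longrightarrow 2mc_i\sum_{j=1}^N d_j \int_{\mathbb{R}^N}|z_i|^{m-2}z_i\,U(z)\,\partial_{x_j}U(z)\,dz.
\end{equation*}
Radial symmetry of $U$ makes the integrand odd in $z_j$ for every $j\neq i$, so only the diagonal term $j=i$ survives; integration by parts then gives $\int |z_i|^{m-2}z_i\,U\,\partial_{x_i}U\,dz = -\tfrac{m-1}{2}\int |z_i|^{m-2}U^2\,dz$, and the leading coefficient reduces to $-m(m-1)c_i d_i \int_{\mathbb{R}^N}|z_i|^{m-2}U^2\,dz$. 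Since $c_i\neq 0$ by $(V_3)$, $m>1$, and the integral is strictly positive and finite (the singularity at $z_i=0$ is integrable because $m-2>-1$ and the decay of $U$ controls the tail), it will follow that $d_i=0$ as soon as we prove $\mathcal{R}_\varepsilon^{(i)} = o(\varepsilon^{N+m-1})$.

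The remainder estimate is the place where the fractional and Kirchhoff features of the problem require care. On $\partial B_d(y_\varepsilon^{(1)})$ with $d\in(1,2)$, Claim 3 combined with the scaling gives the pointwise bound $u_\varepsilon^{(k)}(x) = O(\varepsilon^{N+2s})$, while for $(-\Delta)^{s/2}U_{\varepsilon,y_\varepsilon^{(k)}}$ the identity $(-\Delta)^{s/2}U_{\varepsilon,y_\varepsilon^{(k)}}(x) = \varepsilon^{-s}\bigl((-\Delta)^{s/2}U\bigr)((x-y_\varepsilon^{(k)})/\varepsilon)$ combined with \eqref{eq2.1} and \eqref{eq4.12}--\eqref{eq4.16} yields smallness of order $\varepsilon^{N+s}$; thus each boundary integrand carries a factor $\varepsilon^\gamma$ for an exponent $\gamma$ that can be taken as large as we need (in practice at least $2N+4s$). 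Writing every boundary difference $(u_\varepsilon^{(1)})^q - (u_\varepsilon^{(2)})^q = q\int_0^1(t u_\varepsilon^{(1)}+(1-t)u_\varepsilon^{(2)})^{q-1}dt\cdot(u_\varepsilon^{(1)}-u_\varepsilon^{(2)})$ replaces the factor $u_\varepsilon^{(1)}-u_\varepsilon^{(2)}$ (after division by its $L^\infty$-norm) by $\xi_\varepsilon$, which is bounded by $1$ and vanishes at infinity uniformly in $\varepsilon$ by \eqref{eq5.1}; this suffices for the $V u^2$, $u^{p+1}$ and $\partial_\nu u\,\partial_{x_i}u$ terms.

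The main obstacle will be to control the Kirchhoff coefficient $A_\varepsilon^{(k)} := \varepsilon^{2s}a + \varepsilon^{4s-N}b\int_{\mathbb{R}^N}|(-\Delta)^{s/2}u_\varepsilon^{(k)}|^2\,dx$ multiplying the nonlocal boundary integral $\mathcal{B}^{(k)} = \int_{\partial\Omega}(|(-\Delta)^{s/2}u_\varepsilon^{(k)}|^2\nu_i - 2\partial_\nu u_\varepsilon^{(k)}\partial_{x_i}u_\varepsilon^{(k)})\,d\sigma$. The telescoping identity
\begin{equation*}
A_\varepsilon^{(1)}\mathcal{B}^{(1)} - A_\varepsilon^{(2)}\mathcal{B}^{(2)} = A_\varepsilon^{(1)}\bigl(\mathcal{B}^{(1)}-\mathcal{B}^{(2)}\bigr) + \bigl(A_\varepsilon^{(1)}-A_\varepsilon^{(2)}\bigr)\mathcal{B}^{(2)}
\end{equation*}
reduces the problem to two pieces: the first is handled exactly as above (the coefficient $A_\varepsilon^{(1)} = O(\varepsilon^{2s})$ combines with the small boundary difference), while the second requires rewriting $A_\varepsilon^{(1)}-A_\varepsilon^{(2)} = \varepsilon^{4s-N}b\int_{\mathbb{R}^N}(-\Delta)^{s/2}(u_\varepsilon^{(1)}+u_\varepsilon^{(2)})\cdot(-\Delta)^{s/2}(u_\varepsilon^{(1)}-u_\varepsilon^{(2)})\,dx$; after dividing by $\|u_\varepsilon^{(1)}-u_\varepsilon^{(2)}\|_{L^\infty}$ this becomes a pairing with $(-\Delta)^{s/2}\xi_\varepsilon$, for which Lemma \ref{Lem5.1} supplies $\|\xi_\varepsilon\|_\varepsilon = O(\varepsilon^{N/2})$, and together with the $L^2$-bound on $(-\Delta)^{s/2}U_{\varepsilon,y_\varepsilon^{(k)}}$ gives the needed smallness. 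Assembling all estimates, $\mathcal{R}_\varepsilon^{(i)}$ is negligible compared to $\varepsilon^{N+m-1}$, and we conclude $d_i = 0$ for every $i$.
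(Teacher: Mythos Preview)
Your proposal is correct and follows essentially the same route as the paper: apply the Poho\v{z}aev identity \eqref{eq4.1} to both solutions on $B_d(y_\varepsilon^{(1)})$, subtract and divide by $\|u_\varepsilon^{(1)}-u_\varepsilon^{(2)}\|_{L^\infty}$, then show the left-hand side has leading order $D_i d_i\,\varepsilon^{N+m-1}$ via $(V_3)$ and Lemma \ref{Lem5.2}, while the right-hand side is $O(\varepsilon^{N+m(1-\tau)})=o(\varepsilon^{N+m-1})$ via the telescoping of the Kirchhoff coefficient (exactly the paper's splitting into $I_1,I_2,I_3$) together with Lemma \ref{Lem4.3}, \eqref{eq4.23} and Lemma \ref{Lem5.1}. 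Your explicit integration-by-parts computation of $D_i=-m(m-1)c_i\int|z_i|^{m-2}U^2\,dz\neq0$ and the oddness argument eliminating the off-diagonal terms $j\neq i$ are a welcome addition; the paper simply asserts $D_i\neq0$.

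One caution on the remainder: your phrasing ``each boundary integrand carries a factor $\varepsilon^\gamma$ for an exponent $\gamma$ that can be taken as large as we need'' is accurate only for the $U_{\varepsilon,y_\varepsilon^{(k)}}$ contributions; the $\varphi_\varepsilon^{(k)}$ pieces of $(-\Delta)^{s/2}u_\varepsilon^{(k)}$ on $\partial B_d$ have no such pointwise decay and must be handled through the Lemma \ref{Lem4.3} choice of $d$ and \eqref{eq4.11}, \eqref{eq4.23}, giving $O(\|\varphi_\varepsilon\|_\varepsilon^2)=O(\varepsilon^{N+2m(1-\tau)})$. You do invoke Lemma \ref{Lem4.3} at the outset, so the argument goes through; just make the dependence explicit when you write it up.
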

\begin{proof}
We use the Pohoz\v{a}ev-type identity \eqref{eq4.1} to prove this lemma. Apply \eqref{eq4.1} to $u_{\varepsilon}^{(1)}$ and $u_{\varepsilon}^{(2)}$ with $\Omega=B_{d}\left(y_{\varepsilon}^{(1)}\right)$, where $d$ is chosen in the same way as that of Lemma \ref{Lem5.2}. We obtain
\begin{equation*}
\begin{aligned}
&\int_{B_{d}\left(y_{\varepsilon}^{(1)}\right)} \frac{\partial V}{\partial x_{i}}\left(\left(u_{\varepsilon}^{(1)}\right)^{2}-\left(u_{\varepsilon}^{(2)}\right)^{2}\right)dx \\
&=\left(\varepsilon^{2} a+\varepsilon^{4s-N} b \int_{\mathbb{R}^{N}}\left|(-\Delta)^{\frac{s}{2}} u_{\varepsilon}^{(1)}\right|^{2}dx\right)\int_{\partial B_{d}\left(y_{\varepsilon}^{(1)}\right)}\left(\left|(-\Delta)^{\frac{s}{2}} u_{\varepsilon}^{(1)}\right|^{2} \nu_{i}-2 \frac{\partial u_{\varepsilon}^{(1)}}{\partial v} \frac{\partial u_{\varepsilon}^{(1)}}{\partial x_{i}}\right)d\sigma \\
&\quad-\left(\varepsilon^{2} a+\varepsilon^{4s-N} b \int_{\mathbb{R}^{N}}\left|(-\Delta)^{\frac{s}{2}} u_{\varepsilon}^{(2)}\right|^{2}dx\right) \int_{\partial B_{d}\left(y_{\varepsilon}^{(1)}\right)}\left(\left|(-\Delta)^{\frac{s}{2}} u_{\varepsilon}^{(2)}\right|^{2} \nu_{i}-2 \frac{\partial u_{\varepsilon}^{(2)}}{\partial v} \frac{\partial u_{\varepsilon}^{(2)}}{\partial x_{i}}\right)d\sigma \\
&+\int_{\partial B_{d}\left(y_{\varepsilon}^{(1)}\right)} V(x)\left(\left(u_{\varepsilon}^{(1)}\right)^{2}-\left(u_{\varepsilon}^{(2)}\right)^{2}\right) \nu_{i}d\sigma \\
&\quad-\frac{2}{p+1} \int_{\partial B_{d}\left(y_{\varepsilon}^{(1)}\right)}\left(\left(u_{\varepsilon}^{(1)}\right)^{p+1}-\left(u_{\varepsilon}^{(2)}\right)^{p+1}\right) \nu_{i}d\sigma.
\end{aligned}
\end{equation*}
In terms of $\xi_{\varepsilon}$, we get
\begin{equation}\label{eq5.14}
\int_{B_{d}\left(y_{\varepsilon}^{(1)}\right)} \frac{\partial V}{\partial x_{i}}\left(u_{\varepsilon}^{(1)}+u_{\varepsilon}^{(2)}\right) \xi_{\varepsilon}dx=\sum_{i=1}^{4} I_{i}
\end{equation}
with
\begin{equation*}
\begin{aligned}
&I_{1}=\left(\varepsilon^{2s} a+\varepsilon^{4s-N} b \int_{\mathbb{R}^{N}}\left|(-\Delta)^{\frac{s}{2}} u_{\varepsilon}^{(1)}\right|^{2}dx\right)\int_{\partial B_{d}\left(y_{\varepsilon}^{(1)}\right)}\left(\left((-\Delta)^{\frac{s}{2}} u_{\varepsilon}^{(1)}+(-\Delta)^{\frac{s}{2}} u_{\varepsilon}^{(2)}\right) \cdot (-\Delta)^{\frac{s}{2}} \xi_{\varepsilon}\right) \nu_{i}d\sigma;\\
&I_{2}=-2\left(\varepsilon^{2s} a+\varepsilon^{4s-N} b \int_{\mathbb{R}^{N}}\left|(-\Delta)^{\frac{s}{2}} u_{\varepsilon}^{(1)}\right|^{2}dx\right) \int_{\partial B_{d}\left(y_{\varepsilon}^{(1)}\right)}\left(\frac{\partial u_{\varepsilon}^{(1)}}{\partial\nu} \frac{\partial u_{\varepsilon}^{(1)}}{\partial x_{i}}-\frac{\partial u_{\varepsilon}^{(2)}}{\partial v} \frac{\partial u_{\varepsilon}^{(2)}}{\partial x_{i}}\right)d\sigma;\\
&I_{3}=\varepsilon^{4s-N} b \int_{\partial B_{d}\left(y_{\varepsilon}^{(1)}\right)}\left(\left|(-\Delta)^{\frac{s}{2}} u_{\varepsilon}^{(2)}\right|^{2} \nu_{i}-2 \frac{\partial u_{\varepsilon}^{(2)}}{\partial v} \frac{\partial u_{\varepsilon}^{(2)}}{\partial x_{i}}\right)d\sigma \int_{\mathbb{R}^{N}}\left((-\Delta)^{\frac{s}{2}} u_{\varepsilon}^{(1)}+(-\Delta)^{\frac{s}{2}} u_{\varepsilon}^{(2)}\right) \cdot (-\Delta)^{\frac{s}{2}} \xi_{\varepsilon}dx;\\
&I_{4}=\int_{\partial B_{d}\left(y_{\varepsilon}^{(1)}\right)} V\left(u_{\varepsilon}^{(1)}+u_{\varepsilon}^{(2)}\right) \xi_{\varepsilon} \nu_{i}d\sigma-2 \int_{\partial B_{d}\left(y_{\varepsilon}^{(1)}\right)} A_{\varepsilon} \xi_{\varepsilon} \nu_{i}d\sigma,
\end{aligned}
\end{equation*}
where $A_{\varepsilon}(x)=\int_{0}^{1}\left(t u_{\varepsilon}^{(1)}(x)+(1-t) u_{\varepsilon}^{(2)}(x)\right)^{p}dt$.
\par
We estimate \eqref{eq5.14} term by term. Note that
\begin{equation*}
\varepsilon^{2s} a+\varepsilon^{4s-N} b \int_{\mathbb{R}^{N}}\left|(-\Delta)^{\frac{s}{2}} u_{\varepsilon}^{(i)}\right|^{2}dx=O\left(\varepsilon^{2s}\right)
\end{equation*}
for each $i=1,2$. Moreover, by similar arguments as that of Lemma \ref{Lem4.4}, we have
\begin{equation*}
\int_{\partial B_{d}\left(y_{\varepsilon}^{(1)}\right)}\left|(-\Delta)^{\frac{s}{2}} u_{\varepsilon}^{(i)}\right|^{2}d\sigma=O\left(\left\|(-\Delta)^{\frac{s}{2}} \varphi_{\varepsilon}^{(i)}\right\|_{L^{2}\left(\mathbb{R}^{N}\right)}^{2}\right).
\end{equation*}
Thus, by \eqref{eq5.2} and Lemma \ref{Lem4.2}-\ref{Lem4.4}, we deduce
\begin{equation*}
\sum_{i=1}^{3} I_{i}=\sum_{i=1}^{2} O\left(\varepsilon^{2}\left\|(-\Delta)^{\frac{s}{2}} \varphi_{\varepsilon}^{(i)}\right\|_{L^{2}\left(\mathbb{R}^{N}\right)}^{2}\right)=O\left(\varepsilon^{N+2 m(1-\tau)}\right)
\end{equation*}
and
\begin{equation*}
\int_{\partial B_{d}\left(y_{\varepsilon}^{(1)}\right)} V(x)\left(u_{\varepsilon}^{(1)}+u_{\varepsilon}^{(2)}\right) \xi_{\varepsilon} \nu_{i}d \sigma=O\left(\varepsilon^{N+m(1-\tau)}\right)
\end{equation*}
and
\begin{equation*}
\int_{\partial B_{d}\left(y_{\varepsilon}^{(1)}\right)} A_{\varepsilon} \xi_{\varepsilon} \nu_{i}d\sigma=O\left(\varepsilon^{(N+m(1-\tau)) p}\right).
\end{equation*}
Hence we conclude that
\begin{equation}\label{eq5.15}
\text { the RHS of }\eqref{eq5.14}=O\left(\varepsilon^{N+m(1-\tau)}\right).
\end{equation}
Next we estimate the left hand side of \eqref{eq5.14}. We have
\begin{equation*}
\begin{aligned}
&\int_{B_{d}\left(y_{\varepsilon}^{(1)}\right)} \frac{\partial V}{\partial x_{i}}\left(u_{\varepsilon}^{(1)}+u_{\varepsilon}^{(2)}\right) \xi_{\varepsilon}(x)dx \\
&=m c_{i} \int_{B_{d}\left(y_{\varepsilon}^{(1)}\right)}\left|x_{i}\right|^{m-2} x_{i}\left(u_{\varepsilon}^{(1)}+u_{\varepsilon}^{(2)}\right) \xi_{\varepsilon}(x)dx+O\left(\int_{B_{d}\left(y_{\varepsilon}^{(1)}\right)}\left|x_{i}\right|^{m}\left(u_{\varepsilon}^{(1)}+u_{\varepsilon}^{(2)}\right) \xi_{\varepsilon}(x)dx\right).
\end{aligned}
\end{equation*}
Observe that
\begin{equation*}
\begin{aligned}
&m c_{i} \int_{B_{d}\left(y_{\varepsilon}^{(1)}\right)}\left|x_{i}\right|^{m-2} x_{i}\left(u_{\varepsilon}^{(1)}+u_{\varepsilon}^{(2)}\right) \xi_{\varepsilon}dx \\
&= m c_{i} \varepsilon^{N} \int_{B_{\frac{d}{\varepsilon}}(0)}\left|\varepsilon y_{i}+y_{\varepsilon, i}^{(1)}\right|^{m-2}\left(\varepsilon y_{i}+y_{\varepsilon, i}^{(1)}\right)\left(U(y)+U\left(y+\frac{y_{\varepsilon}^{(1)}-y_{\varepsilon}^{(2)}}{\varepsilon}\right)\right) \bar{\xi}_{\varepsilon}dx \\
&\quad +m c_{i} \int_{B_{d}\left(y_{\varepsilon}^{(1)}\right)}\left|x_{i}\right|^{m-2} x_{i}\left(\varphi_{\varepsilon}^{(1)}+\varphi_{\varepsilon}^{(2)}\right) \xi_{\varepsilon}dx.
\end{aligned}
\end{equation*}
Since $U$ decays at infinity (see Section 3) and $y_{\varepsilon}^{(i)}=o(\varepsilon)$, using Lemma \ref{Lem5.2} we deduce
\begin{equation}\label{eq5.16}
\begin{aligned}
&m c_{i} \varepsilon^{N} \int_{B_{\frac{d}{\varepsilon}}(0)}\left|\varepsilon y_{i}+y_{\varepsilon, i}^{(1)}\right|^{m-2}\left(\varepsilon y_{i}+y_{\varepsilon, i}^{(1)}\right)\left(U(y)+U\left(y+\frac{y_{\varepsilon}^{(1)}-y_{\varepsilon}^{(2)}}{\varepsilon}\right)\right) \bar{\xi}_{\varepsilon}dx \\
&=2 m c_{i} \varepsilon^{m+2} \sum_{j=1}^{N} d_{j} \int_{\mathbb{R}^{N}}\left|y_{i}\right|^{m-2} y_{i} U(y) \partial_{x j} Udx+o\left(\varepsilon^{m+2}\right) \\
&=D_{i} d_{i} \varepsilon^{m+2}+o\left(\varepsilon^{m+2}\right)
\end{aligned}
\end{equation}
where
\begin{equation*}
D_{i}=2 m c_{i} \int_{\mathbb{R}^{N}}|y|^{m-2} y_{i} U(y) \partial_{x_{i}} U \neq 0.
\end{equation*}
On the other hand, by H\"{o}lder's inequality, \eqref{eq4.23} and Lemma \ref{Lem5.1}, we have
\begin{equation}\label{eq5.17}
\begin{aligned}
m c_{i} \int_{B_{d}\left(y_{\varepsilon}^{(1)}\right)}\left|x_{i}\right|^{m-2} x_{i}\left(\varphi_{\varepsilon}^{(1)}+\varphi_{\varepsilon}^{(2)}\right) \xi_{\varepsilon}dx &=\sum_{i=1}^{2} O\left(\int_{\mathbb{R}^{N}}\left|\varphi_{\varepsilon}^{(i)} \| \xi_{\varepsilon}\right|\right) \\
&=\sum_{i=1}^{2} O\left(\left\|\varphi_{\varepsilon}^{(i)}\right\|_{\varepsilon}\left\|\xi_{\varepsilon}\right\|_{\varepsilon}\right) \\
&=O\left(\varepsilon^{N+m(1-\tau)}\right).
\end{aligned}
\end{equation}
Therefore, from  \eqref{eq5.16} and  \eqref{eq5.17}, we deduce
\begin{equation}\label{eq5.18}
m c_{i} \int_{B_{d}\left(y_{\varepsilon}^{(1)}\right)}\left|x_{i}\right|^{m-2} x_{i}\left(u_{\varepsilon}^{(1)}+u_{\varepsilon}^{(2)}\right) \xi_{\varepsilon} d x=D_{i} d_{i} \varepsilon^{N+m-1}+o\left(\varepsilon^{N+m-1}\right).
\end{equation}
Similar arguments give
\begin{equation}\label{eq5.19}
O\left(\int_{B_{d}\left(y_{\varepsilon}^{(1)}\right)}\left|x_{i}\right|^{m}\left(u_{\varepsilon}^{(1)}+u_{\varepsilon}^{(2)}\right) \xi_{\varepsilon} d x\right)=O\left(\varepsilon^{N+m}\right).
\end{equation}
Hence, combining  \eqref{eq5.18} and  \eqref{eq5.19}, we obtain
\begin{equation}\label{eq5.20}
\text{the LHS of}\ \eqref{eq5.14}=D_{i} d_{i} \varepsilon^{N+m-1}+o\left(\varepsilon^{N+m-1}\right).
\end{equation}
So \eqref{eq5.15} and  \eqref{eq5.20} imply that
\begin{equation*}
d_i=0.
\end{equation*}
The proof is complete.
\end{proof}

{\bf Proof of Theorem \ref{Thm1.1}$(ii)$:}
If there exist two distinct solutions $u_{\varepsilon}^{(i)}, i=1,2$, then by setting $\xi_{\varepsilon}$ and $\bar{\xi}_{\varepsilon}$ as above, we find that
\begin{equation*}
\left\|\bar{\xi}_{\varepsilon}\right\|_{L^{\infty}\left(\mathbb{R}^{N}\right)}=1
\end{equation*}
by assumption, and that
\begin{equation*}
\left\|\bar{\xi}_{\varepsilon}\right\|_{L^{\infty}\left(\mathbb{R}^{N}\right)}=o(1) \quad \text { as } \varepsilon \rightarrow 0
\end{equation*}
by \eqref{eq5.1} and  \eqref{eq5.2}. We reach a contradiction. The proof is complete.

\section*{Acknowledgments}
The research of Vicen\c{t}iu D. R\u{a}dulescu was supported by a grant of
the Romanian Ministry of Research, Innovation and Digitization, CNCS/CCCDI-UEFISCDI,
project number PCE 137/2021, within PNCDI III.
The research of Zhipeng Yang was supported by the RTG 2491 "Fourier Analysis and Spectral Theory".
\\
{\bf Conflict of Interest}: On behalf of all authors, the corresponding author states that there is no conflict of interest.\\
{\bf Data availability}: The data that support the findings of this study are available within the article.

\bibliographystyle{plain}
\bibliography{yang}

\end{document}